\newcommand{\addresseshere}{%
  \enddoc@text\let\enddoc@text\relax
}
\title[Lower semi-continuity of the Waldschmidt constants]{Lower semi-continuity of the Waldschmidt constants}
\author{Daseul Bae}
\address{Department of Mathematical Sciences, KAIST, 291 Daehak-ro, Yuseong-gu, Daejeon 305-701, Korea}
\email{bds0822@kaist.ac.kr}
\subjclass[2010]{13F20, 14C20, 14J26}
\keywords{Fat point subschemes, Symbolic powers, Waldschmidt constant, Weak del Pezzo surfaces.}
\date{}
\begin{document}

\theoremstyle{plain}
\newtheorem{Thm}{Theorem}[section]
\newtheorem{Prop}[Thm]{Proposition}
\newtheorem{Cor}[Thm]{Corollary}
\newtheorem{Lem}[Thm]{Lemma}
\newtheorem*{Thm*}{Theorem}
\newtheorem*{Cor*}{Corollary}

\theoremstyle{definition}
\newtheorem{Def}[Thm]{Definition}
\newtheorem{Note}[Thm]{Notation}
\newtheorem{Exam}[Thm]{Example}
\newtheorem{Conj}[Thm]{Conjecture}

\newtheorem{Rmk}[Thm]{Remark}

\def\P{\mathbb{P}}
\def\Pic{\mathrm{Pic}}
\def\Cl{\mathrm{Cl}}
\def\Spec{\mathrm{Spec}\,}
\def\Proj{\mathrm{Proj}}
\def\Bl{\mathrm{Bl}}
\def\Sym{\mathrm{Sym}}
\def\Gr{\mathrm{Gr}}
\def\PProj{\mathbf{Proj}}
\def\SSym{\mathbf{Sym}}
\def\GGr{\mathbf{Gr}}
\def\O{\mathcal{O}}
\def\cL{\mathcal{L}}
\def\cE{\mathcal{E}}
\def\fW{\mathfrak{W}}
\def\fX{\mathfrak{X}}
\def\fY{\mathfrak{Y}}
\newcommand{\hooklongrightarrow}{\lhook\joinrel\longrightarrow}

\newcommand{\tens}[1]{%
  \mathbin{\mathop{\otimes}\displaylimits_{#1}}%
}



\begin{abstract}
In this paper, we study the Waldschmidt constant of a generalized fat point subscheme $Z=m_1p_1+\cdots+m_rp_r$ of $\P^2$, where $p_1,\cdots,p_r$ are essentially distinct points on $\P^2$, satisfying the proximity inequalities.
Furthermore, we prove its lower semi-continuity for $r\le 8$.
Using this property, we also calculate the Waldschmidt constants of the fat point subschemes $Z=p_1+\cdots+p_5$ giving weak del Pezzo surfaces of degree 4.
%
\end{abstract}


\maketitle

\section{Introduction.}\label{S1}

Let $k$ be an algebraically closed field of characteristic 0 and $\P^N$ be the projective space over $k$.
Let $R$ be the homogeneous coordinate ring of $\P^N$.
The \emph{Waldschmidt constant} $\widehat{\alpha}(I)$ of a homogeneous ideal $I\subset R$ asymptotically measures the degree of a hypersurface passing through the closed subscheme defined by $I$ in $\P^N$.
After Nagata's work for the 14th Hilbert problem, these constants received great attention.
Recently, Waldschmidt constants have been rediscovered by \cite{BH} in the containment relation between symbolic and ordinary powers of homogeneous ideals.
This result has renewed interest in computing $\widehat{\alpha}(I)$.
There are known results for some specific ideals:
Waldschmidt constants for planar point configurations are computed in \cite{FGH,BH6}.
Waldschmidt constants for a Stanley-Reisner ideal of certain classes of simplicial complexes are computed in \cite{BF,SS}.
Also the authors of \cite{BCG} compute Waldschmidt constants $\widehat{\alpha}(I)$ for squarefree monomial ideals $I$.
As a consequence, they show that $\widehat{\alpha}(I)$ can be expressed in terms of the fractional chromatic number of the hypergraph constructed from the primary decomposition of $I$.
However, computing Waldschmidt constants remains a difficult problem in general.
Instead, there has been great interest in finding a lower bound of the Waldschmidt constants.
It is proved in \cite{HS, MW} that
\[
\widehat{\alpha}(I) \ge \frac{\alpha(I)}{N}
\]
where $\alpha(I)$ is the minimal degree of the polynomials in $I$.
This inequality has been improved by \cite{GC}.
The author conjectured the following inequality, and proved it for $N=2$:
\begin{Conj}[{Chudnovsky \cite{GC}}]\label{Chud}
\[
\widehat{\alpha}(I) \ge \frac{\alpha(I)+N-1}{N}.
\]
\end{Conj}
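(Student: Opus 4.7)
The plan is to reduce Chudnovsky's inequality to a refined symbolic-power containment and then attack that containment by multiplier-ideal methods. The weaker bound $\widehat{\alpha}(I)\ge\alpha(I)/N$ cited above follows from the Ein--Lazarsfeld--Smith containment $I^{(Nm)}\subseteq I^{m}$, so the missing $(N-1)/N$ must come from a strengthening of this containment. The natural target is the Harbourne--Huneke-style inclusion
\[
I^{(Nm)}\;\subseteq\;\mathfrak{m}^{(N-1)m}\,I^{m},\qquad m\ge 1,
\]
where $\mathfrak{m}$ is the irrelevant maximal ideal of $R$. A minimal-degree form $F\in I^{(Nm)}$ then decomposes as $F=\sum_{i}G_{i}H_{i}$ with $H_{i}\in I^{m}$ and $\deg G_{i}\ge(N-1)m$, giving $\alpha(I^{(Nm)})\ge m\alpha(I)+(N-1)m$; dividing by $Nm$ and using $\widehat{\alpha}(I)=\inf_{m}\alpha(I^{(m)})/m$ recovers exactly the conjectured bound.

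To establish the refined containment I would run the asymptotic multiplier-ideal argument of Ein--Lazarsfeld--Smith while tracking the extra vanishing along $V(I)$. Their proof produces a multiplier ideal $\mathcal{J}\subseteq I^{m}$ of the form $\pi_{*}\mathcal{O}_{X}(K_{X/\P^{N}}-\lfloor Nm\cdot D\rfloor)$ for a suitable log resolution $\pi\colon X\to\P^{N}$ with $D$ cutting out $I\mathcal{O}_{X}$. Since $V(I)$ has codimension $N$, above each point $p\in V(I)$ the exceptional divisor $E_{p}$ enters $K_{X/\P^{N}}$ with coefficient at least $N-1$. This extra $(N-1)m$ contribution along each $E_{p}$, combined with subadditivity $\mathcal{J}(c+c')\subseteq\mathcal{J}(c)\cdot\mathcal{J}(c')$ of asymptotic multiplier ideals, should in principle produce the additional $\mathfrak{m}^{(N-1)m}$ factor on the right-hand side.

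The main obstacle is quantitative. Extracting a clean $\mathfrak{m}^{(N-1)m}$ factor requires that the multiplier ideal vanish to order at least $(N-1)m$ at every point of $V(I)$, uniformly in $I$, which is itself a local avatar of the inequality one is trying to prove and so cannot be imported for free. A realistic first case is that of points in very general position, where Demailly's interpolation constants and Dumnicki--Tutaj-Gasi{\'n}ska-type asymptotic estimates supply the required local vanishing; one could then try to pass from very general to arbitrary configurations via the lower semi-continuity of $\widehat{\alpha}$ developed elsewhere in this paper. Carrying the construction through for arbitrary radical $I$ in $\P^{N}$ with $N\ge 3$ is precisely where current techniques run out, which is why the conjecture has remained open beyond Chudnovsky's original $N=2$ theorem; any proposal honest about the general statement has to acknowledge that this last codimension-$N$ local estimate is the substantive gap to be bridged.
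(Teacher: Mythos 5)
The statement you were asked about is labelled a \emph{Conjecture} in the paper, and the paper offers no proof of it: the introduction only surveys the cases in which it is known ($N=2$ by Chudnovsky, very general points by Fouli--Mantero--Xie, points on a quadric, star configurations, etc.). Your proposal, to its credit, is honest about this: what you have written is a research plan whose central step --- the containment $I^{(Nm)}\subseteq\mathfrak{m}^{(N-1)m}I^{m}$ of Harbourne--Huneke --- is itself an open conjecture, and your final paragraph concedes that the codimension-$N$ local vanishing needed to extract the $\mathfrak{m}^{(N-1)m}$ factor from the multiplier-ideal argument is not available in general. The reduction itself is sound: if $F\in I^{(Nm)}$ and the containment holds, then $\deg F\ge (N-1)m+\alpha(I^{m})=(N-1)m+m\alpha(I)$, and dividing by $Nm$ and passing to the limit gives exactly the conjectured bound. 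But a reduction of one open problem to another is not a proof, so there is a genuine gap, and you have correctly located it.

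One concrete step of your plan is, however, actually backwards and would fail even if the very-general case were settled. You propose to ``pass from very general to arbitrary configurations via the lower semi-continuity of $\widehat{\alpha}$ developed elsewhere in this paper.'' The semi-continuity statements here (Theorem \ref{MainThm2}, via Corollary \ref{MainCor1}) say that $\widehat{\alpha}(Z^{s})\le\widehat{\alpha}(Z^{u})$ for $s$ special and $u$ general, i.e.\ specialization can only \emph{decrease} the Waldschmidt constant. Chudnovsky's inequality is a \emph{lower} bound on $\widehat{\alpha}$, so an upper bound on the special value in terms of the general one gives you nothing; worse, $\alpha(I)$ also drops under specialization, and the two drops are not comparable in any way that the semi-continuity theorem controls. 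This is precisely why knowing the conjecture for very general points (which is now a theorem) does not yield it for arbitrary points, and why the problem remains open for $N\ge3$.
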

\noindent Chudnovsky's conjecture is known for several cases: 
The conjecture for any finite set of points in $\P^2$ is proved in \cite{GC, HH}.
For $N\ge3$, it is proved in \cite{MD} that the inequality holds for any general points in $\P^3_k$,
and for any set of at most $N+1$ points in general position in $\P^N_k$ where $k$ is a field of characteristic 0.
Also the authors of \cite{JD, BH} proved the conjecture for any set of a binomial number of points in $\P^N$ forming a star configuration, and the authors of \cite{DT} proved it for any set of at most $2^n$ points in very general position in $\P^N$.
Recently, the authors of \cite{FMX} achieve a remarkable result on Chudnovsky's conjecture.
They show that the conjecture holds for any finite set of very general points in $\P^N_k$, where $k$ is an algebraically closed field of characteristic 0, which improves the result of \cite{DT}.
They also show that the conjecture holds for any finite set of points in $\P^N_k$ lying on a quadric, without any assumptions on $k$.
These results have been considerably improved in \cite{MSS}.

In this paper, we find a lower bound of the Waldschmidt constants in a different manner. 
We consider all configurations of $r$-points in $\P^2$ allowing infinitely near points, and examine how the Waldschmidt constant varies when we move the points to special position.
It turns out that the Waldschmidt constant of the given $r\le8$-points get smaller when we move the points to special position.

Let $p_1,\cdots,p_r\in\P^2$ be distinct points and let $m_1,\cdots,m_r$ be non-negative integers.
Consider a homogeneous ideal $I(Z)=\bigcap_{i=1}^r I(p_i)^{m_i}$, where each $I(p_i)$ denotes the homogeneous ideal defining the closed point $p_i$.
It defines a 0-dimensional subscheme of $\P^2$, called \emph{fat point subscheme} and denoted by $Z=m_1 p_1+\cdots+m_r p_r$.
As a graded $k$-algebra, $R=\bigoplus_{d\ge0} R_d$ where $R_d$ is the $k$-vector space spanned by the homogeneous polynomials of degree $d$.
The ideal $I(Z)=\bigoplus_{d\ge0}[I(Z)]_d$ is also a graded $k$-algebra with grading $[I(Z)]_d=I(Z)\cap R_d$.
\begin{Def}[\cite{MW}, cf. {\cite[Definition 1.3.2]{BH1}}]
\textit{The Waldschmidt constant} of a fat point sub-scheme $Z$ is the real number
\[
\widehat{\alpha}(Z) = \widehat{\alpha}(I(Z)) = \inf \left\{ \frac{d}{m} : \left[ I(Z)^{(m)} \right]_d\ne0, m>0 \right\},
\]
where $I(Z)^{(m)}$ denotes the $m$-th symbolic power of $I(Z)$.
\end{Def}
Note that $I(Z)^{(m)}=\bigcap_{i=1}^r I(p_i)^{mm_i} = I(mZ)$ and hence 
\[
\widehat{\alpha}(Z) = \inf_{m>0} \frac{\alpha\left( I(Z)^{(m)} \right)}{m} = \inf_{m>0} \frac{\alpha(I(mZ))}{m}.
\]
Let $b:X_{r}\rightarrow\P^2$ be the blowing up of $\P^2$ at the points $p_1,\cdots,p_r$ and let $X=X_r$.
Denote by $L$ the pull-back of the general line on $\P^2$ to $X$ and $E_i$ the pull-back of each exceptional divisor.
Let $E_Z = m_1E_1+\cdots+m_rE_r$ and $\mathscr{I}=b_*\mathcal{O}_X(-E_Z)$.
Note that $\mathscr{I}$ is a coherent sheaf of ideals on $\P^2$ defining $Z$,
and hence $I(Z)=\Gamma_*(\mathscr{I})$ where
\[
\Gamma_*(\mathscr{I}) := \bigoplus_{d\in\mathbb{Z}} \Gamma(\P^2,\mathscr{I}(d))=\bigoplus_{d\ge0} H^0(dL-E_Z)
\]
is the graded $R$-module associated to $\mathscr{I}$ (cf. \cite[Proposition IV.1.1]{BH3}).
The last equality is deduced from the projection formula.
More precisely, $I(Z)$ is the image of $\Gamma_*(\mathscr{I})$ under the natural inclusion $\Gamma_*(\mathscr{I}) \hooklongrightarrow \Gamma_*(\O_{\P^2})\cong k[\P^2]$
where the last isomorphism follows by \cite[Proposition II.5.13]{RH}.
Therefore, $[I(mZ)]_d \cong H^0(dL- mE_Z)$ as $k$-vector spaces for any integer $m>0$, and thus
\begin{align}\label{WaldConst1}
\widehat{\alpha}(Z) = \inf \left\{ \frac{d}{m} : H^0(dL- mE_Z)\ne0, m>0 \right\}.
\end{align}

Let $Z=m_1p_1+\cdots+m_rp_r$ and $Z'=m_1p_1'+\cdots m_rp_r'$ be fat point subschemes of $\P^2$ with generic $p_1,\cdots,p_r$.
Denote by $E_Z=m_1E_1+\cdots+m_rE_r$ and $E_{Z'}=m_1E_1'+\cdots+m_rE_r'$ the corresponding divisors.
By the upper semi-continuity of the cohomology groups, if $dL-mE_Z$ is effective, then so is $dL'-mE_{Z'}$, and hence $\widehat{\alpha}(Z)\ge \widehat{\alpha}(Z')$ (cf. \cite[Theorem I.1.6]{BH3}).
Note that the effective cones of the blowing up surfaces $X$ of $\P^2$ at generic points $p_1,\cdots, p_r$ are isomorphic, and thus $\widehat{\alpha}(Z)$ is constant for generic $p_1,\cdots, p_r$.
So we can expect that the Waldschmidt constant $\widehat{\alpha}(Z)$ has the biggest value at generic points $p_1,\cdots,p_r$, and it becomes smaller if we move the points to special position.
Such property also can be observed by some known results for Waldschmidt constants of planar point configurations, see e.g. \cite{FGH,BH6}.
Also see \cite{FMX,DT} for related works.

Now consider a family $S$ of a given points $p_1,\cdots,p_r$ (not necessarily generic).
The question is that there is an open subset $U\subset S$ on which $\widehat{\alpha}(Z)$ is constant.
If such $U$ always exists, we deduce the local minimality of $\widehat{\alpha}(Z)$ (\ref{MainThm1}) by induction on $\dim(S)$, and hence deduce our main result (\ref{MainThm2}).
If $r\le 8$, the effective cone of the blowing up surface $X$ is finitely generated (\ref{FiniteGen}), and it guarantees the existence of such $U$ (\ref{MainCor1}).

\begin{Thm*}[\ref{MainThm2}]
Let $S$ be an algebraic variety over $k$ and $\fX_{r}^S\to S$ an $r$-edpf (\ref{edpf}) with $r\le 8$.
Let $\cE_Z^S=m_1\cE_1^S+\cdots+m_r\cE_r^S$ with $m_i\ge0$ and suppose $-\cE_Z^s$ satisfies the proximity inequalities for all $s\in S$.
The function $\widehat{\alpha}_{Z^S}: S \rightarrow \mathbb{R}$ (\ref{ClosedSubschemeZ}) defined by $s\mapsto \widehat{\alpha}(Z^s)$ is a lower semi-continuous function on $S$.
Furthermore, the image of $\widehat{\alpha}_{Z^S}$ is a finite set.
\end{Thm*}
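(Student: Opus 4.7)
The statement comprises two assertions: that $\widehat{\alpha}_{Z^S}$ is lower semi-continuous, and that its image is a finite set. My plan is to deduce lower semi-continuity as an essentially immediate consequence of the local minimality result (Theorem \ref{MainThm1}), and to obtain the finiteness of the image by induction on $\dim S$ using the dense-open constancy property (Corollary \ref{MainCor1}).

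For lower semi-continuity I would argue pointwise. Theorem \ref{MainThm1}, once established, provides for each $s \in S$ an open neighborhood $V_s \subseteq S$ on which $\widehat{\alpha}(Z^{s'}) \ge \widehat{\alpha}(Z^s)$ for every $s' \in V_s$. This is precisely the pointwise criterion for $\widehat{\alpha}_{Z^S}$ to be lower semi-continuous at $s$, and verifying it at every point of $S$ yields the global claim with no further work.

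For the finiteness of the image, I would proceed by induction on $d := \dim S$. The base case $d = 0$ is trivial, since $S$ is then a finite set. For the inductive step, I would apply Corollary \ref{MainCor1} to the $r$-edpf $\fX_r^S \to S$ to obtain a dense open $U \subseteq S$ on which $\widehat{\alpha}_{Z^S}$ is constant, taking some value $c_0$. The complement $T := S \setminus U$ is then a proper closed subvariety with $\dim T < d$. Decomposing $T$ into its irreducible components $T_1, \ldots, T_k$, the restricted families $\fX_r^{T_i} \to T_i$ are again $r$-edpfs, and $-\cE_Z^s$ still satisfies the proximity inequalities at every $s \in T_i$ because this is a fibrewise condition; hence the induction hypothesis applies to each of them. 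Combining, one obtains
\[
\widehat{\alpha}_{Z^S}(S) = \{c_0\} \cup \bigcup_{i=1}^{k} \widehat{\alpha}_{Z^S}(T_i),
\]
a finite union of finite sets.

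The main obstacle I anticipate is the formal verification that restriction to a closed subvariety preserves the structure of an $r$-edpf in the sense of Definition \ref{edpf}: one must confirm that the proximity relations and the essential distinctness of the infinitely near points behave correctly under base change to $T_i \hookrightarrow S$. Since these are pointwise conditions on the fibres, this should be routine once the correct category of edpfs is in place, but it is the only step in which the argument depends on the specific framework of the paper rather than purely on the two cited results.
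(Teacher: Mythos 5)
Your proposal is correct and follows essentially the same route as the paper: lower semi-continuity is deduced directly from the local minimality in Theorem \ref{MainThm1} (the paper phrases this via openness of the superlevel sets $\{s : \widehat{\alpha}_{Z^S}(s) > M\}$, which is equivalent to your pointwise criterion), and finiteness of the image is proved by the same induction on $\dim S$ using the open set of constancy from Corollary \ref{MainCor1} and the irreducible components of the complement. Your concern about restricting an $r$-edpf to a closed subvariety is indeed routine, since the construction commutes with arbitrary base change $T\to S$ and the proximity inequalities are a fibrewise condition.
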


To state and prove the main theorem rigorously, we need to choose an appropriate moduli space of distinct points.
In order to fix the multiplicities $m_i$ of $Z$, we have to consider the order of the points.
Following \cite{SK}, we construct the universal family $\fX_{r-1}$ of $r$-points considering their order (\S\ref{S32}).
The family $\fX_{r-1}$ in fact parametrizes $r$-points on $\P^2$ including infinitely near points.

What we discussed so far is the behavior of the Waldschmidt constants of planar $r\le8$-distinct points.
However, the assumption that the points are distinct is not necessary.
In fact, \cite{BH2} generalizes the notion of fat point subscheme by using complete ideals,
and the theorem nicely extends to generalized fat point subscheme.
We will discuss these notions in \S\ref{S2}.
\\

\noindent
\textbf{The organization of the paper.}
In Section \ref{S2}, we discuss generalized fat point subschemes and Waldschmidt constants of them.
In Section \ref{S3}, we first introduce preliminaries for constructing the universal family of essentially distinct points $\fX_{r-1}$.
Further, we discuss the notion of a family of $r$-essentially distinct points and examine their related properties.
In Section \ref{S4}, we prove our main results: Theorem \ref{MainThm1} and Theorem \ref{MainThm2}.
As an application of our main theorems, we calculate the Waldschmidt constants of generalized fat point subschemes $Z=p_1+\cdots+p_5$, which give rise to weak del Pezzo surfaces of degree 4 in the last section.
\\

\noindent
\textbf{Acknowledgements.} I would like to thank my thesis advisor, Yongnam Lee, for his encouragement and great support.
Also I thank Brian Harbourne for his incentive lectures at KAIST while this work was carried out.
The author was supported by Basic Science Research Program through the NRF of Korea (2016930170).


\vspace{3em}
\section{Generalized fat point subscheme}\label{S2}

Let $X_0=\P^2$ and let $b_{i}:X_{i}\rightarrow X_{i-1}$ be the blowing up of $X_{i-1}$ at a point $p_i\in X_{i-1}$ for $i=1,\cdots,r$.
The points $p_1,\cdots, p_r$  are called \emph{$r$-essentially distinct points} of $X_0$.
Denote by $X=X_r$, $b_{r,i-1}=b_{r}\circ\cdots\circ b_{i}:X\rightarrow X_{i-1}$ the composition, and $b= b_{r,0}:X\rightarrow X_{0}$.
Denote also by $L$ the pull-back divisor of a general line on $\P^2$ to $X$,
and $E_i$ the pull-back divisor of the exceptional divisor of $b_{i}:X_{i}\rightarrow X_{i-1}$ to $X$.
The classes of $L,E_1,\cdots,E_r$ in $\Cl(X)$ are called the \textit{exceptional configuration} corresponding to $p_1\cdots,p_r$ (cf. \cite{BH2}).
Let $E = m_1E_1+\cdots+m_rE_r$, $m_i\ge0$, and let $\mathscr{I}=b_*\mathcal{O}_X(-E)$.
Then $\mathscr{I}$ is a \emph{complete} coherent sheaf of ideals on $\P^2$, that is,
its stalks $\mathscr{I}_x\subset \mathcal{O}_{\P^2,x}$ are complete ideals in $\mathcal{O}_{\P^2,x}$, either primary for $\mathfrak{m}_x$ (the unique maximal ideal of $\mathcal{O}_{\P^2,x}$) or $\mathscr{I}_x=\mathcal{O}_{\P^2,x}$, for every $x\in\P^2$ (\cite[Theorem 15]{BH4}).
It defines a $0$-dimensional subscheme $Z$ of $\P^2$, denoted by $Z=m_1p_1+\cdots+ m_rp_r$.
Conversely, if $\mathscr{I}$ is a complete coherent sheaf of ideals on $\P^2$, it defines a $0$-dimensional subscheme $Z$ of $\P^2$,
and there are essentially distinct points $p_1,\cdots,p_r$ of $\P^2$ such that $\mathscr{I}=b_*\mathcal{O}_X(-E_Z)$ for some $E_Z = m_1E_1+\cdots+m_rE_r$, $m_i\ge0$ (\cite[Corollary 17]{BH4}).
Furthermore, we can choose $-E_Z$ satisfying \textit{the proximity inequalities}: $-E_Z\cdot C\ge0$ for every irreducible component $C$ of $E_i$, $i=1,\cdots,r$.
Hence, there is a bijection between complete coherent sheaves of ideals $\mathscr{I}=b_*\mathcal{O}_X(-E_Z)$ and $-E_Z$ satisfying the proximity inequalities.
In this way, we generalize the usual notion of fat point subschemes by complete coherent sheaves of ideals on $\P^2$.
We also abbreviate saying that $-E_Z$ satisfies the proximity inequalities by simply saying that $Z$ satisfies the proximity inequalities.

\vspace{1em}

Now let $Z=m_1p_1+\cdots+m_rp_r$ be a fat point subscheme of $\P^2$ defined by a complete coherent sheaf of ideals $\mathscr{I}_Z=b_*\mathcal{O}_X(-E_Z)$, where $E_Z = m_1E_1+\cdots+m_rE_r$ with $m_i\ge0$.
Let $I=\Gamma_*(\mathscr{I}_Z)$. We define
\[
\widehat{\alpha}(Z) = \widehat{\alpha}(E_Z) = \inf \left\{ \frac{d}{m} : \left[ I^{(m)} \right]_d\ne0, m>0 \right\}.
\]
For given $m>0$, the least degree $\alpha(I^{(m)})$ is hard to predict.
However, if $[I^{(m)}]_d = H^0(dL-mE_Z)$, we have the same formula as \ref{WaldConst1}, and there are more advantages for computing $\widehat{\alpha}(Z)$, for example, Riemann-Roch theorem and semi-continuity of cohomology groups.
In fact, we have the equality if $Z$ satisfies the proximity inequalities.
First, we prove the following lemma.

\begin{Lem}\label{SymSat}
Let $I$ be as above. 
We have $(I^m)^{\mathrm{sat}} = I^{(m)}$, where $(\cdot)^\mathrm{sat}$ denotes the saturation of an ideal.
\end{Lem}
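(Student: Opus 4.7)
The plan is to read off both sides from a primary decomposition of $I^m$ in $R$: the saturation drops exactly the $\mathfrak{m}$-primary component, while the symbolic power keeps exactly the components at the minimal primes of $I$. Under the hypotheses, these two prescriptions produce the same intersection.

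First I would pin down $\mathrm{Ass}(R/I)$. Let $q_1,\ldots,q_s\in\P^2$ be the images on $\P^2$ of the support of $\mathscr{I}_Z$, and let $P_i\subset R$ be the homogeneous prime of $q_i$; each $P_i$ has height $2$. Because $I=\Gamma_*(\mathscr{I}_Z)$ is a saturated ideal defining the $0$-dimensional subscheme $Z$ and each stalk $\mathscr{I}_{Z,q_i}$ is $\mathfrak{m}_{q_i}$-primary, the associated primes of $I$ are exactly $P_1,\ldots,P_s$; in particular the irrelevant ideal $\mathfrak{m}=(x_0,x_1,x_2)$ is not associated to $I$.

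Next I would use the standard inclusion $\mathrm{Ass}(R/I^m)\subseteq\{P'\in\Spec R\mid P\subseteq P'\text{ for some }P\in\mathrm{Ass}(R/I)\}$. Since $\mathrm{Ass}(R/I^m)$ consists of homogeneous primes and the only homogeneous prime of $R=k[x_0,x_1,x_2]$ strictly containing a height-$2$ prime is $\mathfrak{m}$, we obtain $\mathrm{Ass}(R/I^m)\subseteq\{P_1,\ldots,P_s,\mathfrak{m}\}$. Fix a primary decomposition
\[
I^m = Q_1\cap\cdots\cap Q_s\cap Q,
\]
where $Q_i$ is the (uniquely determined) $P_i$-primary component and $Q$ is the $\mathfrak{m}$-primary component (set $Q=R$ if $\mathfrak{m}\notin\mathrm{Ass}(R/I^m)$).

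Finally I would compute both sides. The saturation $(I^m)^{\mathrm{sat}}=\bigcup_{k\geq 0}(I^m:\mathfrak{m}^k)$ removes precisely the $\mathfrak{m}$-primary component, giving $(I^m)^{\mathrm{sat}}=\bigcap_{i=1}^s Q_i$. By definition, $I^{(m)}=\bigcap_{i=1}^s(I^m R_{P_i}\cap R)$; since $\mathfrak{m}\not\subseteq P_i$ and $P_j\not\subseteq P_i$ for $j\neq i$, localization at $P_i$ sends every primary component except $Q_i$ to the unit ideal, so $I^m R_{P_i}\cap R = Q_i$ and hence $I^{(m)}=\bigcap_{i=1}^s Q_i$. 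Comparison yields $(I^m)^{\mathrm{sat}}=I^{(m)}$.

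The main potential obstacle is justifying the shape of $\mathrm{Ass}(R/I)$ in the generalized setting: one must know that, despite the presence of infinitely near points in the essentially distinct configuration, the associated primes of $I=\Gamma_*(\mathscr{I}_Z)$ still correspond only to the finitely many \emph{image} points on $\P^2$. Once this is granted, the rest is routine primary-decomposition bookkeeping in the polynomial ring~$R$.
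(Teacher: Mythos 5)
Your proof is correct and is essentially the paper's argument in different packaging: the paper localizes the quotient module $I^{(m)}/(I^m)^{\mathrm{sat}}$ at a minimal prime of its support and derives a contradiction, whereas you make the underlying primary decomposition of $I^m$ explicit and read off both sides from it. Both versions turn on the same two facts — that $\mathrm{Ass}(R/I)$ consists exactly of the height-two primes of the image points (your "main potential obstacle", which the paper likewise dispatches by noting $I$ is saturated and defines a $0$-dimensional subscheme), and that every homogeneous associated prime of $R/I^m$ other than $\mathfrak{m}$ already lies in $\mathrm{Ass}(R/I)$ — so no further comparison is needed.
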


\begin{proof}
First, $(I^m)^{\mathrm{sat}} \subset I^{(m)}$ since $I^{(m)}$ is saturated.
Now let $M=I^{(m)}/(I^m)^{\mathrm{sat}}$ and suppose $M\ne0$.
Then there is a minimal prime $\mathfrak{p}\in\mathrm{Supp}(M)$.
Since $M\subset R/(I^m)^{\mathrm{sat}}$, $\mathfrak{p}\in \mathrm{Ass}(R/(I^m)^{\mathrm{sat}})$ the set of associated primes of $R/(I^m)^{\mathrm{sat}}$.
In particular, $\mathfrak{p}$ is a homogeneous ideal which is not the maximal homogeneous ideal $\mathfrak{m}$.
Note that $I\subset \mathfrak{p}\subsetneq \mathfrak{m}$, and therefore $\mathfrak{p}\in\mathrm{Ass}(R/I)$ since $I$ defines a $0$-dimensional subscheme of $\P^2$.
Then it follows that $I^{(m)}\subset I^mR_\mathfrak{p}$ in the field of fractions of $R$, and hence
\[
\left( (I^m)^{\mathrm{sat}} \right)_{\mathfrak{p}} = \left( I^m \right)_{\mathfrak{p}} \subset \left( I^{(m)} \right)_{\mathfrak{p}} = I^{(m)} R_{\mathfrak{p}} \subset I^mR_\mathfrak{p} = \left( I^m \right)_{\mathfrak{p}}.
\]
Therefore $\left( (I^m)^{\mathrm{sat}} \right)_{\mathfrak{p}}=\left( I^{(m)} \right)_{\mathfrak{p}}$, i.e., $M_{\mathfrak{p}}=0$ which is a contradiction.
\end{proof}

Denote by $\mathscr{I}_Z^{[m]} = b_*\mathcal{O}_X(-mE_Z)$.
Note that it depends on the choice of $E_Z$.
Denote also by $I^{[m]}=\Gamma_*(\mathscr{I}_Z^{[m]})$ the graded $R$-module associated to $\mathscr{I}_Z^{[m]}$.
Then $I^{[m]} \cong \bigoplus_{d\ge0} H^0(dL-mE_Z)$ by the projection formula.

\begin{Prop}\label{WaldConst}
Let $Z$ and $\mathscr{I}_Z$ be as before.
If $Z$ satisfies the proximity inequalities, the canonical homomorphism ${\mathscr{I}_Z}^m \to \mathscr{I}_Z^{[m]}$ is an isomorphism for all $m>0$.
Furthermore, in this case, $I^{(m)}= I^{[m]}$ for all $m>0$ and therefore, $\widehat{\alpha}(Z) = \inf \left\{ \frac{d}{m} : H^0(dL- mE_Z)\ne0, m>0 \right\}$.
\end{Prop}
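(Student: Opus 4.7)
The sheaf assertion $\mathscr{I}_Z^m \cong \mathscr{I}_Z^{[m]}$ carries all the substance; once it is established, the remaining claims follow formally. Sheafification commutes with products, so $\mathscr{I}_Z^m$ is the sheafification of $I^m$, whence $\Gamma_*(\mathscr{I}_Z^m) = (I^m)^{\mathrm{sat}}$. Combining this with the projection formula (which gives $\Gamma_*(\mathscr{I}_Z^{[m]}) = I^{[m]} = \bigoplus_d H^0(dL - mE_Z)$) yields the chain
\[
I^{[m]} \;=\; \Gamma_*\bigl(\mathscr{I}_Z^{[m]}\bigr) \;=\; \Gamma_*\bigl(\mathscr{I}_Z^m\bigr) \;=\; (I^m)^{\mathrm{sat}} \;=\; I^{(m)},
\]
where the last equality is Lemma \ref{SymSat}. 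The stated formula for $\widehat{\alpha}(Z)$ then drops out by comparing graded pieces.

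Thus the real work is to show that $\mathscr{I}_Z^m \to \mathscr{I}_Z^{[m]}$ is an isomorphism of sheaves on $\P^2$. Both sheaves sit naturally inside $\mathcal{O}_{\P^2}$ and agree with it off $b(\mathrm{Supp}\, E_Z)$; the canonical map is just the inclusion $\mathscr{I}_Z^m \subseteq \mathscr{I}_Z^{[m]}$, so only the reverse inclusion at each of the finitely many stalks over $b(\mathrm{Supp}\, E_Z) \subset \P^2$ needs verification. Fix such a point $x$. Under the proximity-inequality hypothesis, the discussion recalled in \S\ref{S2} (via \cite[Theorem 15]{BH4}) shows that $\mathscr{I}_{Z,x}$ is a complete $\mathfrak{m}_x$-primary ideal in the $2$-dimensional regular local ring $\mathcal{O}_{\P^2,x}$; since $-mE_Z$ also satisfies the proximity inequalities, the stalk $\mathscr{I}_{Z,x}^{[m]}$ is complete as well. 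Moreover, the description of $b_*\mathcal{O}_X(-mE_Z)$ via the divisorial valuations attached to the irreducible components of the $E_i$ identifies $\mathscr{I}_{Z,x}^{[m]}$ with the integral closure of $\mathscr{I}_{Z,x}^m$.

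It therefore suffices to show that $\mathscr{I}_{Z,x}^m$ is itself already complete (integrally closed). This is exactly Zariski's classical theorem on complete ideals in a $2$-dimensional regular local ring: the product of two complete ideals is complete. An easy induction on $m$ then gives $\mathscr{I}_{Z,x}^m = \mathscr{I}_{Z,x}^{[m]}$, completing the proof. The one genuinely non-trivial input, and what I expect to be the main obstacle, is Zariski's theorem itself; it uses the $2$-dimensionality of $\P^2$ in an essential way (the analogous statement fails in higher dimensions), and it is also the step where the proximity-inequality hypothesis becomes indispensable, for without it $\mathscr{I}_Z$ need not have been complete to start with.
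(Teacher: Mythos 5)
Your argument is correct and rests on the same essential input as the paper's proof, namely Zariski's theorem that products of complete ideals in a two-dimensional regular local ring are complete: the paper invokes it in sheafified form via \cite[Theorem 15]{BH4}, factoring $\mathscr{I}_Z = b_*\mathcal{O}_X(f_1)^{n_1}\cdots b_*\mathcal{O}_X(f_r)^{n_r}$ in the dual configuration and using multiplicativity of the pushforward, while you invoke it stalkwise together with the (equally standard, proximity-dependent) identification of $\mathscr{I}_{Z,x}^{[m]}$ with the integral closure of $\mathscr{I}_{Z,x}^{m}$. The surrounding formal reduction, $I^{(m)} = (I^m)^{\mathrm{sat}} = \Gamma_*\left({\mathscr{I}_Z}^m\right) = \Gamma_*\left(\mathscr{I}_Z^{[m]}\right) = I^{[m]}$ via Lemma \ref{SymSat} and the projection formula, is identical to the paper's.
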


\begin{proof}
Let $f_1,\cdots,f_r$ be the dual configuration of $E_1,\cdots,E_r$.
They form a basis of $\Cl(X)$ and generate all the classes of $-m_1E_1-\cdots-m_rE_r$ satisfying proximity inequalities (cf. \cite[Proposition 2]{BH4}),
and thus we can uniquely write $\mathscr{I}_Z=b_*\mathcal{O}_X(F_Z)$, where $F_Z=n_1f_1+\cdots+n_rf_r$ with $n_i\ge0$.
By \cite[Theorem 15]{BH4},
\begin{align*}
{\mathscr{I}_Z}^m \cong \left( b_*\mathcal{O}_X(F_Z) \right)^m 
&\cong \left( b_*\mathcal{O}_X(f_1)^{n_1}\cdots b_*\mathcal{O}_X(f_r)^{n_r} \right)^m \\
&\cong b_*\mathcal{O}_X(f_1)^{mn_1}\cdots b_*\mathcal{O}_X(f_r)^{mn_r} \cong b_*\mathcal{O}_X(mF_Z) = \mathscr{I}_Z^{[m]}
\end{align*}
for all $m\ge1$.
Therefore, for any $m\ge1$,
\[
I^{(m)}= \left( I^m \right)^{\mathrm{sat}} = \Gamma_*({\mathscr{I}_Z}^m) = I^{[m]}
\] 
as ideals in $R$ by \ref{SymSat}.
It follows that $\widehat{\alpha}(Z) = \inf \left\{ \frac{d}{m} : H^0(dL- mE_Z)\ne0, m>0 \right\}$.
\end{proof}

\begin{Exam}
Let $p_1\in\P^2$ and $p_2$ an infinitely near point of $p_1$.
Let $\pi:X=X_2\to X_1\to \P^2$ be the corresponding sequence of blowing ups.
Changing the coordinates if necessary, we may assume that $p_1=[0,0,1]$ and that $p_2$ is the intersection of the proper transformation of the line $(x=0)$ and the exceptional divisor $F_1$ of $X_1\to\P^2$. 
Then 
\[
X_1\cong\Proj(k[x,y,s,t]/(xt-ys))
\]
locally over $\mathbb{A}^2=\Spec k[x,y]$ and $p_2$ is the point defined by the ideal $(x,y,s)$.
Let $E:=E_1+2E_2$ where $E_1$ is the total transformation of $F_1$ on $X$, and $E_2$ is the exceptional divisor of the second blowing up. 
Note that $-E$ does not satisfies the proximity inequalities.
Let 
\[
\mathscr{I}:=\pi_*\O_{X}(-E) = \pi_*\O_X(-E_1-2E_2).
\]
Let $I=\Gamma_*(\mathscr{I})$ considered as an ideal in $R=k[\P^2]$.
From the direct calculation, $I=(x^2,xy,y^3)$, thus
\[
I^{(2)}= \left(I^2 \right)^{\mathrm{sat}}= (x^4,x^3y,x^2y^2,xy^4,y^6).
\]
On the other hand, 
\[
\mathscr{I}^{[2]}=\pi_*\O_X(-2E) = \pi_*\O_X(-2E_1-4E_2).
\]
We now compute $I^{[2]}$.
Let $\widehat{F}_1$ be the strict transformation of $F_1$ on $X$.
As a divisor, $\widehat{F}_1 = E_1 - E_2$.
Consider the following short exact sequence of sheaves:
\[
0 \longrightarrow \O_X(-3E_1-3E_2) \stackrel{\varphi}{\longrightarrow} \O_X(-2E_1-4E_2) \longrightarrow \O_{\widehat{F}_1}(-2) \longrightarrow 0
\]
where $\varphi$ is given by tensoring $\O_X(\widehat{F}_1)$.
Let $L\subset X$ be the total transformation of a general line on $\P^2$ so that $\O_X(dL)=\pi^*\O_{\P^2}(d)$ for all $d\ge0$.
Then for any $d\ge0$, tensoring $\pi^*\O_{\P^2}(d)$ on the above short exact sequence of sheaves yields the following short exact sequence of sheaves:
\[
0 \longrightarrow \O_X(dL-3E_1-3E_2) \longrightarrow \O_X(dL-2E_1-4E_2) \longrightarrow \O_{\widehat{F}_1}(-2) \longrightarrow 0.
\]
Taking the $\Gamma(X,-)$-functor, we have
\[
0 \longrightarrow \Gamma(X,\O_X(dL-3E_1-3E_2)) \stackrel{\simeq}{\longrightarrow} \Gamma(X,\O_X(dL-2E_1-4E_2)) \longrightarrow \Gamma(X,\O_{\widehat{F}_1}(-2)) = 0.
\]
By the projection formula,
\[
\Gamma(X,\O_X(dL-3E_1-3E_2)) \cong \Gamma(\P^2,\mathscr{J}^{[3]}(d))
\]
where $\mathscr{J} = \pi_*\O_X(-E_1-E_2)$, and
\[
\Gamma(X,\O_X(dL-2E_1-4E_2)) \cong \Gamma(\P^2,\mathscr{I}^{[2]}(d)).
\]
Hence there is a natural isomorphism $\Gamma(\P^2,\mathscr{J}^{[3]}(d)) \stackrel{\simeq}{\longrightarrow} \Gamma(\P^2,\mathscr{I}^{[2]}(d))$ for all $d\ge0$, induced by $\pi_*\varphi$.
It deduces the natural isomorphism
\[
\xymatrix{
\Gamma_*(\mathscr{J}^{[3]}) \ar[rr]^{\simeq}_{\Gamma_*(\pi_*\varphi)} &&
\Gamma_*(\mathscr{I}^{[2]}) \ar@{^(->}[r] & 
\Gamma_*(\O_{\P^2}) \cong R.
}
\]
Therefore $I^{[2]} = J^{[3]}$ where $J = \Gamma_*(\mathscr{J})$ in $R$.
Note that $J = (x, y^2)$ from direct calculation, and since $-E_1-E_2$ satisfies the proximity inequalities,
\[
J^{[3]} = J^{(3)} = (J^3)^{\mathrm{sat}} = (x^3,x^2y^2,xy^4,y^6)
\]
by \ref{WaldConst}.
Hence $I^{[2]} = (x^3,x^2y^2,xy^4,y^6)$ and therefore, $I^{(2)}\subsetneq I^{[2]}$ for $x^3\notin I^{(2)}$.
\end{Exam}


\vspace{3em}
\section{Universal family of essentially distinct points}\label{S3}


\subsection{Preliminaries}\label{S31}

Before introduce the universal family of essentially distinct points, we remark some general facts.
Let $S$ be a scheme over $k$ and $X$ an $S$-scheme.
Let $Y\subset X$ be a closed subscheme, and $\mathscr{I}$ the corresponding quasi-coherent sheaf of ideals on $X$.
Let $X'=\Bl(X,Y)$ be the blowing up of $X$ along $Y$, and $Y'=Y\times_X X'$ the total transformation of $Y$.
Finally, denote by $b:X'\to X$ the canonical morphism.

\begin{Prop}\label{Blowingup}
Assume the morphisms $X\to S$ and $Y\to S$ are smooth.
\begin{enumerate}
    \item The blowing up $b: X'\to X$ commutes with every base change $S_1\to S$.
    \item The morphism $b: X'\to X$ is projective.
    \item $X'$ and $Y'$ are smooth over $S$.
\end{enumerate}
\end{Prop}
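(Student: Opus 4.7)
The plan is to deduce all three parts from a single structural fact, then handle (2), (3), and (1) in that order. The structural fact is that under the smoothness hypotheses on $X\to S$ and $Y\to S$, the immersion $Y\hookrightarrow X$ is a regular immersion and the conormal sheaf $\mathscr{I}/\mathscr{I}^2$ is locally free of rank $c=\dim(X/S)-\dim(Y/S)$. I would establish this first by appealing to the conormal exact sequence
\begin{equation*}
0\longrightarrow \mathscr{I}/\mathscr{I}^2\longrightarrow \Omega_{X/S}|_Y\longrightarrow \Omega_{Y/S}\longrightarrow 0,
\end{equation*}
which is exact on the left because $Y\to S$ is smooth, and whose right-hand two sheaves are locally free of the expected ranks, forcing local freeness of the first term; equivalently, on an étale chart $\Spec A\subset X$ one can choose a regular system of relative parameters for $A$ over the base whose first $c$ elements $f_1,\ldots,f_c$ generate $\mathscr{I}$.

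Part (2) is then immediate: $\mathscr{I}$ is coherent of finite type, so $b:X'\to X$ is the relative Proj of a quasi-coherent graded $\O_X$-algebra $\bigoplus_{n\ge0}\mathscr{I}^n$ generated in degree one by a coherent ideal, hence projective. For part (3), I would run the classical chart computation. On the chart $\Spec A$ above, $X'$ is covered by affines $\Spec A[t_1,\ldots,\widehat{t_j},\ldots,t_c]$ with $t_i=f_i/f_j$ and relations $f_j t_i=f_i$; since $f_1,\ldots,f_c$ extends to a regular system of relative parameters on $A$, each such chart remains smooth over $S$, so $X'\to S$ is smooth. The exceptional divisor $Y'=Y\times_X X'$ identifies with the projective bundle $\mathbb{P}(\mathscr{I}/\mathscr{I}^2)\to Y$ using the regular-immersion structure, and is therefore smooth over $S$ as the composition of a smooth projective bundle with the smooth morphism $Y\to S$.

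For part (1), let $S_1\to S$ be an arbitrary base change and set $X_1=X\times_S S_1$, $Y_1=Y\times_S S_1$. Smoothness is stable under base change, so $Y_1\hookrightarrow X_1$ is again a regular immersion cut out étale-locally by the pullback of the regular sequence $f_1,\ldots,f_c$. The chart description developed in (3) visibly commutes with base change on $S$, so the pulled-back charts cover $\Bl(X_1,Y_1)$ and glue to give a canonical isomorphism $\Bl(X_1,Y_1)\cong X'\times_X X_1$. The main obstacle will be this last step, since blow-ups do not commute with arbitrary base change in general: it is essential to argue via the regular-immersion hypothesis (which ensures that the Rees algebra $\bigoplus_n\mathscr{I}^n$ admits a presentation whose formation is compatible with arbitrary base change) rather than naive functoriality of Proj. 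I would verify this compatibility directly by comparing the explicit chart descriptions on both sides.
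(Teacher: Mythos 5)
Your argument is correct, and it reaches the same conclusions by a genuinely more hands-on route than the paper. Both proofs rest on the same structural input — smoothness of $X\to S$ and $Y\to S$ forces $Y\hookrightarrow X$ to be a regular immersion with locally free conormal sheaf $\mathscr{I}/\mathscr{I}^2$ (the paper cites EGA IV 17.12.1 and 16.9.8 where you use the conormal exact sequence; these are equivalent) — and part (2) is handled identically via the surjection $\SSym^*_{\O_X}(\mathscr{I})\to\bigoplus_n\mathscr{I}^n$. The divergence is in (1) and (3): the paper deduces both from the flatness over $S$ of the associated graded algebra $\GGr^*_{\mathscr{I}}(\O_X)\cong\SSym^*_{\O_Y}(\mathscr{I}/\mathscr{I}^2)$ (normal flatness) together with EGA IV 19.4.8, which simultaneously gives that the blow-up commutes with base change and that $X'$ is smooth over $S$ along the exceptional locus; you instead work in étale charts where $\mathscr{I}$ is generated by part of a regular system of relative parameters, so that the Rees algebra has the explicit Koszul presentation $\O_X[T_1,\dots,T_c]/(f_iT_j-f_jT_i)$, whose formation visibly commutes with base change and whose standard charts are visibly smooth over $S$. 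Your approach buys self-containedness and makes the mechanism transparent (it makes clear exactly why arbitrary base change is permitted here even though blow-ups do not commute with base change in general); the paper's buys brevity by outsourcing both facts to a single EGA criterion. The one step you defer — that the pulled-back sequence is still (quasi-)regular and cuts out $Y_1$ in $X_1$, so the Koszul presentation persists after base change — does follow from the same smoothness-over-$S_1$ argument applied to $X_1\to S_1$ and $Y_1\to S_1$, so the deferred verification goes through.
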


\begin{proof}
By \cite[Th\'eor\`eme 17.12.1]{EGAIV4}, the closed immersion $i:Y\to X$ is regular, and hence we have the isomorphism
$\SSym_{\mathcal{O}_Y}^*(\mathcal{C}_{Y/X}) \cong \GGr_{\mathscr{I}}^*(\mathcal{O}_X)$ as graded $\mathcal{O}_Y$-algebras, where
\[
\GGr_{\mathscr{I}}^*(\mathcal{O}_X)=\bigoplus_{n\ge0} (\mathscr{I}^n/\mathscr{I}^{n+1}).
\]
In particular, $\GGr_{\mathscr{I}}^*(\mathcal{O}_X)$ is a locally free $\mathcal{O}_Y$-module.
Since $Y\to S$ is smooth, it is flat, and hence $\GGr_{\mathscr{I}}^*(\mathcal{O}_X)$ is flat over $S$ as an $\mathcal{O}_Y$-module.
Note that $\GGr_{\mathscr{I}}^*(\mathcal{O}_X)$ is supported on $Y$, 
so that it is flat over $S$ as an $\mathcal{O}_X$-algebra .
The first assertion follows from \cite[Proposition 19.4.8]{EGAIV4}.

Since both morphisms $X\to S$ and $Y\to S$ are locally of finite presentation, so is the closed immersion $i:Y\to X$ by \cite[Proposition 1.4.3(v)]{EGAIV1}.
In fact $i:Y\to X$ is of finite presentation since a closed immersion is quasi-compact and separated.
This implies that $\mathscr{I}$ is of finite type as an $\mathcal{O}_X$-module. 
Note that there is a canonical surjective morphism
\[
\SSym_{\mathcal{O}_X}^*(\mathscr{I}) \to \bigoplus_{n\ge0} \mathscr{I}^{n}.
\]
Therefore, $X'=\PProj(\bigoplus_{n\ge0} \mathscr{I}^{n})$ is a closed subscheme of the projective bundle 
$\P(\mathscr{I})=\PProj(\SSym_{\mathcal{O}_X}^*(\mathscr{I}))$ on $X$,
and therefore, the morphism $b:X'\to X$ is projective.

For the last part, note that, by \cite[Proposition 16.9.8]{EGAIV4}, the conormal sheaf $\mathcal{C}_{Y/X}=\mathscr{I}/\mathscr{I}^2$ is a locally free $\mathcal{O}_Y$-module of finite rank.
Therefore, 
\[
Y'= \PProj \left( \GGr_{\mathscr{I}}^*(\mathcal{O}_X) \right) 
\cong \PProj \left( \SSym_{\mathcal{O}_Y}^*(\mathcal{C}_{Y/X}) \right) = \P(\mathcal{C}_{Y/X})
\]
is smooth over $Y$ by \cite[Corollaire 17.3.9]{EGAIV4}.
Then it follows that $Y'$ is smooth over $S$.
Note that $b:X'\setminus Y' \to X\setminus Y$ is an isomorphism, and hence $X'$ is smooth over $S$ at every point $x'\in X'\setminus Y'$.
Finally, by \cite[Proposition 19.4.8]{EGAIV4}, $X'$ is smooth over $S$ at every point $x'\in Y'$.
\end{proof}

\begin{Lem}\label{RelEffDiv1}
Assume the hypothesis of \ref{Blowingup}.
Consider the inverse image ideal sheaf $\mathscr{I}^m \cdot\mathcal{O}_{X'}$ with $m\ge1$.
Then the quotient sheaf $\mathcal{O}_{X'}/\mathscr{I}^m \cdot\mathcal{O}_{X'}$ is flat over $S$, i.e., $mY'$ is a relative effective Cartier divisor on $X'/S$.
\end{Lem}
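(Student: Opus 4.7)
The plan is to carry out a devissage argument on powers of the exceptional ideal sheaf. Write $\mathcal{L}:=\mathscr{I}\cdot\mathcal{O}_{X'}$. By the universal property of the blowing up, $\mathcal{L}$ is an invertible sheaf of ideals on $X'$, so $Y'$ is an effective Cartier divisor on $X'$. Because $\mathcal{L}$ is invertible, the image of $(\mathcal{L})^{\otimes m}\to\mathcal{O}_{X'}$ is just $\mathcal{L}^m$, and this coincides with the inverse image ideal sheaf $\mathscr{I}^m\cdot\mathcal{O}_{X'}$. So the question reduces to showing $\mathcal{O}_{X'}/\mathcal{L}^m$ is flat over $S$ for all $m\ge 1$.

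I would proceed by induction on $m$. For the base case $m=1$, we have $\mathcal{O}_{X'}/\mathcal{L}\cong \mathcal{O}_{Y'}$, and by part (3) of Proposition \ref{Blowingup}, $Y'$ is smooth over $S$, hence flat. For the inductive step, consider the short exact sequence
\[
0\longrightarrow \mathcal{L}^{m-1}/\mathcal{L}^m \longrightarrow \mathcal{O}_{X'}/\mathcal{L}^m \longrightarrow \mathcal{O}_{X'}/\mathcal{L}^{m-1}\longrightarrow 0.
\]
The quotient on the right is $S$-flat by the induction hypothesis. The subsheaf on the left is $\mathcal{L}^{m-1}\otimes_{\mathcal{O}_{X'}}\mathcal{O}_{Y'}$, which is an invertible sheaf on $Y'$, hence locally free over $\mathcal{O}_{Y'}$, and so $S$-flat because $Y'$ is. Since flatness over $S$ is preserved under extensions (immediate from the long exact sequence of $\mathrm{Tor}^{\mathcal{O}_S}$), the middle term is $S$-flat, completing the induction.

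The only genuine content used is the invertibility of $\mathcal{L}$ (from the universal property of blowing up) and the $S$-smoothness of $Y'$ (from Proposition \ref{Blowingup}(3)); the rest is standard homological bookkeeping. The main point to verify carefully is the identification $\mathscr{I}^m\cdot\mathcal{O}_{X'}=\mathcal{L}^m$, which is what makes the associated graded filtration of $\mathcal{O}_{X'}/\mathcal{L}^m$ behave like that of the $m$-th infinitesimal neighborhood of a Cartier divisor and brings the problem down to sheaves supported on $Y'$.
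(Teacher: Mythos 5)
Your proof is correct, but it takes a genuinely different route from the paper's. You work \emph{upstairs} on $X'$: you invoke the universal property of the blowing up to get invertibility of $\mathcal{L}=\mathscr{I}\cdot\mathcal{O}_{X'}$, identify $\mathscr{I}^m\cdot\mathcal{O}_{X'}$ with $\mathcal{L}^m$, and then induct on $m$ using the filtration $0\to\mathcal{L}^{m-1}/\mathcal{L}^m\to\mathcal{O}_{X'}/\mathcal{L}^m\to\mathcal{O}_{X'}/\mathcal{L}^{m-1}\to0$, whose subsheaf is a line bundle on the $S$-smooth scheme $Y'$ (Proposition \ref{Blowingup}(3)) and hence $S$-flat. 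The paper instead works \emph{downstairs}, stalk by stalk: it uses the Rees-algebra description to write $\left(\mathcal{O}_{X'}/\mathscr{I}^m\cdot\mathcal{O}_{X'}\right)_{x'}$ as $\bigoplus_{n\ge0}\mathscr{I}^n_x/\mathscr{I}^{n+m}_x$ and checks flatness of each graded piece via the exact sequence $0\to\mathscr{I}^n_x/\mathscr{I}^{n+m}_x\to\mathcal{O}_{X,x}/\mathscr{I}^{n+m}_x\to\mathcal{O}_{X,x}/\mathscr{I}^n_x\to0$, relying only on the $S$-flatness of the truncations $\mathcal{O}_X/\mathscr{I}^k$ already established in the proof of \ref{Blowingup}(1); it never needs the smoothness of $Y'$ over $S$ nor the invertibility of the exceptional ideal. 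Your argument is shorter and has the advantage of making explicit why $mY'$ really is a (relative effective) \emph{Cartier} divisor, which the paper's proof leaves implicit; the paper's argument buys uniformity with the neighbouring Lemma \ref{RelEffDiv2}, which is proved by exactly the same stalkwise devissage, and it sidesteps the (routine but worth stating) verification that $\mathscr{I}^m\cdot\mathcal{O}_{X'}=(\mathscr{I}\cdot\mathcal{O}_{X'})^m$. Both proofs are complete.
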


\begin{proof}
Let $x'\in X'$ be arbitrary, and let $x\in X$ and $s\in S$ be the images of $x'$.
Note that $\mathcal{O}_{X'}/\mathscr{I}^m \cdot\mathcal{O}_{X'}\cong b^*(\mathcal{O}_{X}/\mathscr{I}^m)$, thus
\[
\left( \mathcal{O}_{X'}/\mathscr{I}^m \cdot\mathcal{O}_{X'} \right)_{x'} \cong \mathcal{O}_{X',x'} \otimes_{\mathcal{O}_{X,x}} \mathcal{O}_{X,x}/\mathscr{I}^m_x
\cong \bigoplus_{n\ge0} \left( \mathscr{I}^n_x \otimes_{\mathcal{O}_{X,x}} \mathcal{O}_{X,x}/\mathscr{I}^m_x \right)
\cong \bigoplus_{n\ge0} \left( \mathscr{I}^n_x /\mathscr{I}^{n+m}_x \right)
\]
as $\mathcal{O}_{X,x}$-modules.
Therefore, $\left( \mathcal{O}_{X'}/\mathscr{I}^m \cdot\mathcal{O}_{X'} \right)_{x'}$ is flat over $\mathcal{O}_{S,s}$ if and only if
$\mathscr{I}^n_x /\mathscr{I}^{n+m}_x$ is flat over $\mathcal{O}_{S,s}$ for all $n\ge 0$.
It is obvious for $n=0$ by the hypothesis (see the proof of \ref{Blowingup}), and for $n\ge1$ the result follows from the exact sequence
\[
0 \longrightarrow \mathscr{I}^n_x /\mathscr{I}^{n+m}_x \longrightarrow  \mathcal{O}_{X,x}/\mathscr{I}^{n+m}_x \longrightarrow \mathcal{O}_{X,x}/\mathscr{I}^{n}_x \longrightarrow 0
\]
of $\mathcal{O}_{X,x}$-modules.
\end{proof}

\begin{Lem}\label{RelEffDiv2}
Assume the hypothesis of \ref{Blowingup}. Let $E$ be a relative effective Cartier divisor on $X/S$, and
denote by $E'$ the pull-back divisor of $E$ by the canonical morphism $b: X'\to X$.
Then $E'$ is also a relative effective Cartier divisor on $X'/S$.
\end{Lem}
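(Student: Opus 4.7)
The plan is to verify the two defining properties of a relative effective Cartier divisor directly, working locally on $X$. On a suitable affine open, $E$ is cut out by an element $f\in\mathcal{O}_X$ that is a non-zero-divisor and whose image $f_s$ in $\mathcal{O}_{X_s}$ is a non-zero-divisor for every $s\in S$ (the latter being equivalent, via the local criterion of flatness applied to the flat morphism $X\to S$, to $\mathcal{O}_X/(f)$ being flat over $\mathcal{O}_S$). Since $E'$ is then cut out locally by $b^{\#}(f)\in\mathcal{O}_{X'}$, the task reduces to showing that $b^{\#}(f)$ is a non-zero-divisor in $\mathcal{O}_{X'}$ with $\mathcal{O}_{X'}/\bigl(b^{\#}(f)\bigr)$ flat over $\mathcal{O}_S$.

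To handle this, I would use that $X'\to S$ is smooth (hence flat and locally of finite presentation) by \ref{Blowingup}(3), and invoke the standard fiberwise criterion for relative effective Cartier divisors on a flat family to reduce the claim to showing that the image of $b^{\#}(f)$ in $\mathcal{O}_{X'_s}$ is a non-zero-divisor for every $s\in S$. By \ref{Blowingup}(1), blowing up commutes with base change, so $b_s:X'_s\to X_s$ is the blowing up of the smooth $k(s)$-scheme $X_s$ along the smooth closed subscheme $Y_s$, and the image of $b^{\#}(f)$ in $\mathcal{O}_{X'_s}$ is precisely $b_s^{\#}(f_s)$. Applying \ref{Blowingup}(3) fiberwise, $X'_s$ is smooth over $k(s)$, so every local ring of $X'_s$ is regular, and in particular an integral domain. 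It therefore suffices to check that $b_s^{\#}(f_s)$ is nonzero at every generic point of $X'_s$, and this follows from the birationality of $b_s$ on each irreducible component: every generic point of a component of $X'_s$ maps to the generic point of a component of $X_s$, at which $f_s$ is nonzero because it is a non-zero-divisor in $\mathcal{O}_{X_s}$.

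The main obstacle is that the blowing up $b:X'\to X$ is not flat in general, so the pullback of a Cartier divisor is not automatically Cartier. The smoothness conclusions of \ref{Blowingup}, both for $X'/S$ and for the fibers $X'_s/k(s)$, are precisely what salvages the argument: they allow one to trade the non-flat pullback $b^{\#}$ for the much more benign problem of checking non-vanishing at generic points on a regular scheme.
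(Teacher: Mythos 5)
Your proof is correct, but it takes a genuinely different route from the paper's. The paper never leaves the total space: it computes $\mathcal{O}_{E',x'}\cong\bigoplus_{n\ge0}\bigl(\mathscr{I}^n_x\otimes_{\mathcal{O}_{X,x}}\mathcal{O}_{E,x}\bigr)$ from the Rees-algebra description of the blowing up and then proves flatness of each graded piece $\mathscr{I}^n_x/\mathscr{J}_x\mathscr{I}^n_x$ over $\mathcal{O}_{S,s}$ by a two-step d\'evissage with short exact sequences, using the flatness of $\mathcal{O}_{X,x}/\mathscr{I}^n_x$ (already established in the proof of \ref{Blowingup}), the flatness of $\mathcal{O}_{E}$ over $S$, and the invertibility of $\mathscr{J}_x$; smoothness of $X'/S$ and the structure of the fibers play no role. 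You instead push everything to the fibers: the slicing criterion for flatness (legitimate here since $X'\to S$ is smooth, hence flat and locally of finite presentation, by \ref{Blowingup}(3)) reduces the claim to $b_s^{\#}(f_s)$ being a non-zero-divisor on $X'_s$, and base-change compatibility of the blowing up (\ref{Blowingup}(1)) plus regularity of the local rings of the smooth fibre $X'_s$ turns this into a check at generic points, which succeeds because the exceptional divisor of $X'_s\to X_s$ is Cartier and so contains no generic point. What each approach buys: the paper's argument is elementary commutative algebra and produces along the way the flatness of the graded pieces themselves; yours is conceptually shorter, leans harder on Proposition \ref{Blowingup}, and in fact verifies slightly more than the paper's proof does, namely that $b^{\#}(f)$ is a non-zero-divisor on $X'$ (so that $E'$ is an honest effective Cartier divisor on the total space), whereas the paper's proof only checks flatness of $\mathcal{O}_{E'}$ over $S$, consistent with the loose usage of the term already present in \ref{RelEffDiv1}.
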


\begin{proof}
Let $x'\in X'$ be arbitrary, and let $x\in X$ and $s\in S$ be the images of $x'$.
We need to check that $\mathcal{O}_{E',x'}$ is flat over $\mathcal{O}_{S,s}$.
Since
\[
\mathcal{O}_{E',x'} \cong \mathcal{O}_{X',x'}\otimes_{\mathcal{O}_{X,x}} \mathcal{O}_{E,x} 
\cong \bigoplus_{n\ge0} \left( \mathscr{I}^n_x \otimes_{\mathcal{O}_{X,x}} \mathcal{O}_{E,x} \right),
\]
$\mathcal{O}_{E',x'}$ is flat over $\mathcal{O}_{S,s}$ if and only if $\mathscr{I}^n_x \otimes_{\mathcal{O}_{X,x}} \mathcal{O}_{E,x}$ is flat over $\mathcal{O}_{S,s}$ for all $n\ge 0$.
It is obvious for $n=0$ by the hypothesis.
So assume $n\ge1$.
Let $\mathscr{J}$ be the ideal sheaf of $E$ on $X$.
Note that
\[
\mathscr{I}^n_x \otimes_{\mathcal{O}_{X,x}} \mathcal{O}_{E,x} \cong \mathscr{I}^n_x/\mathscr{J}_x \mathscr{I}^n_x,
\]
and that there is an exact sequence of $\mathcal{O}_{X,x}$-modules
\[
0 \longrightarrow \mathscr{I}^n_x/\mathscr{J}_x \mathscr{I}^n_x \longrightarrow \mathcal{O}_{X,x}/\mathscr{J}_x \mathscr{I}^n_x
\longrightarrow \mathcal{O}_{X,x}/\mathscr{I}^n_x \to 0.
\]
Here $\mathcal{O}_{X,x}/\mathscr{I}^n_x$ is flat over $\mathcal{O}_{S,s}$ by the assumption.
Therefore, $\mathscr{I}^n_x/\mathscr{J}_x \mathscr{I}^n_x$ is flat over $\mathcal{O}_{S,s}$ if and only if $\mathcal{O}_{X,x}/\mathscr{J}_x \mathscr{I}^n_x$ is flat over $\mathcal{O}_{S,s}$.
The latter follows by the exact sequence of $\mathcal{O}_{X,x}$-modules
\[
0 \longrightarrow \mathscr{J}_x/\mathscr{J}_x \mathscr{I}^n_x \longrightarrow \mathcal{O}_{X,x}/\mathscr{J}_x \mathscr{I}^n_x
\longrightarrow \mathcal{O}_{X,x}/\mathscr{J}_x \to 0.
\]
In fact, $\mathcal{O}_{X,x}/\mathscr{J}_x$ is flat over $\mathcal{O}_{S,s}$ since $E$ is flat over $S$.
Also $\mathscr{J}_x/\mathscr{J}_x \mathscr{I}^n_x \cong \mathscr{J}_x\otimes_{\mathcal{O}_{X,x}} \mathcal{O}_{X,x}/\mathscr{I}^n_x$,
and therefore it is flat over $\mathcal{O}_{S,s}$ since $\mathscr{J}_x\cong \mathcal{O}_{X,x}$ as $\mathcal{O}_{X,x}$-modules.
Hence $\mathcal{O}_{X,x}/\mathscr{J}_x \mathscr{I}^n_x$ is flat over $\mathcal{O}_{S,s}$.
\end{proof}


\vspace{1em}
\subsection{Universal family of essentially distinct points}\label{S32}

We first introduce the parametrized space of essentially distinct points constructed by \cite{SK}, and its properties.
Let $X$ be a non-singular projective variety over $k$.
For $i\ge-1$, define a sequence of varieties $\fX_i$ and morphisms $\pi_{i+1},\, \xi_{i+1}:\fX_{i+1}\to \fX_{i}$ recursively as follows.
Let $\fX_{-1}=\Spec k$, $\fX_{0}=X$ and $\pi_{0}:\fX_{0}\rightarrow \fX_{-1}$ be the structure morphism.
Let $\xi_{0}=\pi_{0}$.
Fix $i\ge0$. Suppose we already constructed $\fX_{i-1},\, \fX_i$, and two morphisms $\pi_{i},\, \xi_{i}$.
Let $\fY_i$ be the fiber product as in the following diagram.
\begin{align*}
\xymatrix{
\fY_i=\fX_i\times_{\fX_{i-1}} \fX_i \ar[d]_{\mathrm{Pr}_{i,1}} \ar[rr]^(.65){\mathrm{Pr}_{i,2}} && \fX_i \ar[d]^{\pi_{i}} \\
\fX_i \ar[rr]_{\pi_{i}} && \fX_{i-1}
}
\end{align*}
Let $\Delta_i:\fX_i\to \fY_i$ be the diagonal morphism, and $\fX_{i+1}=\Bl(\fY_i,\Delta_i(\fX_i))$ the blowing up of $\fY_i$ along $\Delta_i(\fX_i)$.
Let $\pi_{i+1}:\fX_{i+1} \stackrel{b_{i+1}}{\longrightarrow} \fY_i \stackrel{\mathrm{Pr}_{i,1}}{\longrightarrow} \fX_i$ and $\xi_{i+1}:\fX_{i+1} \stackrel{b_{i+1}}{\longrightarrow} \fY_i \stackrel{\mathrm{Pr}_{i,2}}{\longrightarrow} \fX_i$ be the compositions.
Recursively we have the following commutative diagram.
\begin{align}\label{Di1}
\xymatrix{
\cdots\ar[r]^{\xi_{i+2}} & \fX_{i+1} \ar[d]_{\pi_{i+1}} \ar[r]^{\xi_{i+1}} & \fX_i \ar[d]^{\pi_{i}}\ar[r]^{\xi_{i}} & \cdots\ar[r]^{\xi_2} & \fX_1 \ar[r]^{\xi_1}\ar[d]^{\pi_1} & \fX_0\ar[d]^{\pi_{0}} \\
\cdots\ar[r]_{\pi_{i+1}} & \fX_i \ar[r]_{\pi_{i}} & \fX_{i-1} \ar[r]_{\pi_{i-1}} & \cdots\ar[r]_{\pi_{1}} & \fX_0 \ar[r]_{\pi_{0}} & \fX_{-1}
}
\end{align}

%

By \ref{Blowingup}, every morphism in the diagram \ref{Di1} is projective and smooth.
The crucial fact is that the blowing up $b_{i+1}: \fX_{i+1}\to \fY_i$ commutes with every base change $f:S\to \fX_{i}$ (\ref{Blowingup}), that is, if we have the following Cartesian diagram
\begin{align*}
\xymatrix{
S\ar[d]_{f^*\Delta_i}\ar[r] & \fX_i\ar[d]^{\Delta_i} \\
S\times_{\fX_i} \fY_i \ar[r]\ar[d]_{f^*\mathrm{Pr}_{i,1}} & \fY_i \ar[d]^{\mathrm{Pr}_{i,1}} \ar[r]^(.5){\mathrm{Pr}_{i,2}} & \fX_i \ar[d]^{\pi_{i}} \\
S\ar[r]_{f} & \fX_i \ar[r]_{\pi_{i}} & \fX_{i-1}
}
\end{align*}
the canonical morphism $\Bl(S\times_{\fX_i}\fY_i, f^*\Delta_i(S)) \longrightarrow S\times_{\fX_i} \fX_{i+1}$ is an isomorphism.

We now define the notion of a family of $r$-essentially distinct points of $X$ (cf. \cite[\S2]{PM}).
\begin{Def}\label{edpf}
Let $r\ge1$.
A \textit{family of $r$-essentially distinct points of $X$}
(or an \textit{$r$-edpf of $X$})
is a couple $(S,\sigma_\bullet)$ consisting of
a $k$-scheme $S$ and a finite sequence $\sigma_\bullet$ of morphisms $\sigma_1,\cdots,\sigma_{r}$ where
each $\sigma_i$ is a section of the morphism $\pi_{i-1}^S:\fX_{i-1}^S\to S$ defined recursively as follows.
Define $\fX_{0}^S=S\times_k X$ and $\pi_{0}^S:\fX_{0}^S\to S$ to be the projection $S\times_k X \to S$.
For $i\ge1$, let $\fX_{i}^S=\Bl(\fX_{i-1}^S,\sigma_{i}(S))$ be the blowing up and define
$\pi_{i}^S:\fX_{i}^S\to S$ to be the composition 
$\pi_{i-1}^S \circ b_{i}^S$, where $b_{i}^S:\fX_{i}^S \to \fX_{i-1}^S$ is the canonical morphism.
\end{Def}

\begin{Note}\label{edpfNote}
For convenience, let us also call $\pi_{r}^S: \fX_{r}^S\to S$ an $r$-edpf. 
So $\pi_{0}^S: S\times_k X\to S$ is the family of 0-essentially distinct points of $X$ over $S$.
\end{Note}


\begin{Rmk}
Again by \ref{Blowingup}, every $\pi_{i}^S:\fX_{i}^S\to S$ is projective and smooth.
Also, every $b_{i}^S : \fX_i^S\to \fX_{i-1}^S$ commutes with every base change $T\to S$.
\end{Rmk}

The notation $\fX$ in \ref{edpf} and \ref{edpfNote} is closely related to the varieties $\fX_{i}$ in the diagram \ref{Di1}.
In fact for each $r\ge0$, there is an $r$-edpf $(\fX_{r-1},\alpha_{r-1,\bullet})$ such that
the induced morphism $\pi_{r}^{\fX_{r-1}}:\fX_{r}^{\fX_{r-1}}\to \fX_{r-1}$ defined in \ref{edpf} is isomorphic to the morphism $\pi_r:\fX_r \to \fX_{r-1}$ in the diagram \ref{Di1}.
That is, there is an isomorphism $\fX_{r}^{\fX_{r-1}} \cong \fX_r$ and the two morphisms $\pi_{r}^{\fX_{r-1}},\, \pi_r$ commute with the isomorphism.
Moreover, for any $r$-edpf $(S,\sigma_\bullet)$, there exists a unique morphism $f:S\to \fX_{r-1}$ so that $\sigma_\bullet$ can be constructed from $f$ and $\alpha_{r-1,\bullet}$.
This is the main result of this section (\ref{UnivEdpf}), and this structural property plays a significant role in next two sections.
We first construct the $r$-edpf $(\fX_{r-1},\alpha_{r-1,\bullet})$, and define the notion of a morphism between two $r$-edpfs.

\begin{Exam}\label{UnivEdpfConst}
Denote by $\pi_{j,i}:\fX_{j}\to \fX_{i}$ the composition $\pi_{j}\circ\cdots\circ\pi_{i+1}$ for $i<j$, and let $\pi_{j,j}=\mathrm{Id}_{\fX_{j}}$.
For $1\le i < r$, we have the following Cartesian diagram
\begin{align*}
    \xymatrix{
    \fX_{r-1} \ar[d]_{{\alpha_{r-1,i} \,=\, \pi_{r-1,i-1}}^*\Delta_{i-1}} \ar[r]^{\pi_{r-1,i-1}} & \fX_{i-1} \ar[d]^{\Delta_{i-1}} \\
    \fX_{r-1} \times_{\fX_{i-1}} \fY_{i-1} \ar[r] \ar[d]_{{\pi_{r-1,i-1}}^*\mathrm{Pr}_{i-1,1}} & \fY_{i-1} \ar[d]^{\mathrm{Pr}_{i-1,1}} \ar[r]^(.5){\mathrm{Pr}_{i-1,2}} & \fX_{i-1} \ar[d]^{\pi_{i-1}} \\
    \fX_{r-1} \ar[r]_{\pi_{r-1,i-1}} & \fX_{i-1} \ar[r]_{\pi_{i-1}} & \fX_{i-2}
    }
\end{align*}
Let $\alpha_{r-1,i}={\pi_{r-1,i-1}}^*\Delta_{i-1}$.
Note that $\fX_{r-1} \times_{\fX_{i-1}} \fY_{i-1} \cong \fX_{r-1} \times_{\fX_{i-2}} \fX_{i-1}$.
Therefore, considering $\alpha_{r-1,i}$ as a section of the projection $\fX_{r-1} \times_{\fX_{i-2}} \fX_{i-1} \to \fX_{r-1}$, 
we have 
\[
\Bl(\fX_{r-1} \times_{\fX_{i-2}} \fX_{i-1}, \alpha_{r-1,i}(\fX_{r-1})) \cong \fX_{r-1} \times_{\fX_{i-1}} \fX_{i}
\]
by \ref{Blowingup}.
Let $\alpha_{r-1,r}:\fX_{r-1} \to \fX_{r-1} \times_{\fX_{r-2}} \fX_{r-1}$ be the diagonal morphism $\Delta_{r-1}$.
Then $(\fX_{r-1},\alpha_{r-1,\bullet})$ is an $r$-edpf of $X$.
\end{Exam}

\begin{Def}
Let  $(S,\sigma_\bullet)$ and $(T, \tau_\bullet)$ be $r$-edpfs. 
An \textit{$r$-edpf-morphism} $f:(T,\tau_\bullet) \to (S, \sigma_\bullet)$ between two $r$-edpfs is a morphism of $k$-schemes $f:T\to S$ together with the morphisms $f_i:\fX_{i}^T \to \fX_{i}^S$, $i=0,\cdots, r$, defined recursively in such a way that we have the following commutative diagrams:
\begin{align*}
    \xymatrix{
    T \ar[d]_{\tau_i}\ar[r] & S\ar[d]^{\sigma_i} \\
    \fX_i^T \ar[r]^{f_i}\ar[d]_{\pi_{i}^T} & \fX_i^S \ar[d]^{\pi_{i}^S} \\
    T\ar[r]_{f} & S
    }
\end{align*}
First define $f_0$ to be the morphism $f\times_k\mathrm{Id}_{X}:T\times_k X \to S\times_k X$.
Assume $f_i$ is already defined.
Under the hypothesis, the above diagram is in fact Cartesian (\cite[Lemma 1.3]{PM}), 
and hence we have the canonical morphism 
\[
\fX_{i+1}^T\cong T\times_S \fX_{i+1}^S \stackrel{\mathrm{Pr_2}}{\longrightarrow} \fX_{i+1}^S,
\]
which is defined to be $f_{i+1}$.
\end{Def}

\begin{Prop}[Universal $r$-edpf]\label{UnivEdpf}
The $r$-edpf $(\fX_{r-1},\alpha_{r-1,\bullet})$ defined in \ref{UnivEdpfConst} satisfies the following universal property: 
for any $r$-edpf $(S,\sigma_\bullet)$, there exists a unique $r$-edpf-morphism $f:(S,\sigma_\bullet) \to (\fX_{r-1},\alpha_{r-1,\bullet})$.
In this sense, we call $(\fX_{r-1},\alpha_{r-1,\bullet})$ the universal $r$-edpf.
\end{Prop}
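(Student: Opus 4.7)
The natural strategy is induction on $r$, constructing the classifying map $f : S \to \fX_{r-1}$ one blowup stage at a time. The key idea is that the universal map should be ``the last point'' of the edpf: given the $r$-th section $\sigma_r$, we push it into $\fX_{r-1}^{\fX_{r-2}} \cong \fX_{r-1}$ via the edpf-morphism produced by the inductive hypothesis.

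\textbf{Base case} $r = 1$. Here $\fX_0 = X$ and $\alpha_{0,1} = \Delta_0 : X \to X \times_k X$. A $1$-edpf $(S,\sigma_1)$ amounts to a section $\sigma_1 : S \to S \times_k X$ of the projection; such a section is uniquely of the form $(\mathrm{Id}_S, h)$ for some $h : S \to X$. Take $f = h$. Then $f_0 = f \times \mathrm{Id}_X$ and the identity $f_0\circ\sigma_1 = \alpha_{0,1}\circ f$ is direct. Uniqueness is forced by projecting this identity on the second factor.

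\textbf{Inductive step.} Given an $r$-edpf $(S,\sigma_\bullet)$, forget $\sigma_r$ to obtain an $(r-1)$-edpf $(S,\sigma_1,\dots,\sigma_{r-1})$. By the inductive hypothesis there is a unique $(r-1)$-edpf-morphism $g : S \to \fX_{r-2}$, together with its induced maps $g_i : \fX_i^S \to \fX_i^{\fX_{r-2}}$ for $i \le r-1$. By the construction in \ref{UnivEdpfConst} (applied with $r-1$ in place of $r$), one has $\fX_{r-1}^{\fX_{r-2}} \cong \fX_{r-1}$, so $g_{r-1}$ lands in $\fX_{r-1}$. Define
\[
f \;=\; g_{r-1}\circ\sigma_r : S \longrightarrow \fX_{r-1}.
\]
The identity $\pi_{r-1}\circ f = g$ follows from $\pi_{r-1}\circ g_{r-1} = g\circ\pi_{r-1}^S$ together with $\pi_{r-1}^S\circ\sigma_r = \mathrm{Id}_S$, so $f$ legitimately lifts $g$ along $\pi_{r-1}: \fX_{r-1}\to\fX_{r-2}$.

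To promote $f$ to an $r$-edpf-morphism, I would use the base-change commutation from \ref{Blowingup} to identify the induced maps $f_i : \fX_i^S \to \fX_i^{\fX_{r-1}}$. Since $\pi_{r-1}\circ f = g$, each $f_i$ (for $i \le r-1$) factors through the fiber product identification $\fX_i^{\fX_{r-1}} \cong \fX_{r-1}\times_{\fX_{r-2}}\fX_i^{\fX_{r-2}}$ and corresponds to the pair $(f, g_i)$; the compatibilities $f_i\circ\sigma_i = \alpha_{r-1,i}\circ f$ for $i \le r-1$ then reduce, via the defining relation $\alpha_{r-1,i} = \pi_{r-1,i-1}^*\Delta_{i-1}$ in \ref{UnivEdpfConst}, to the corresponding compatibilities $g_i\circ\sigma_i = \alpha_{r-2,i}\circ g$ coming from the inductive hypothesis. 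For the last compatibility $f_r\circ\sigma_r = \alpha_{r-1,r}\circ f = \Delta_{r-1}\circ f$, it comes directly from the fact that $f = g_{r-1}\circ\sigma_r$ together with the diagonal characterization of $\alpha_{r-1,r}$. Uniqueness of $f$ then drops out: any valid $f'$ must satisfy $\pi_{r-1}\circ f' = g$ (by induction), and the relation $f'_{r-1}\circ\sigma_r = \Delta_{r-1}\circ f'$ forces $f' = \mathrm{pr}_2\circ f'_{r-1}\circ\sigma_r = g_{r-1}\circ\sigma_r = f$ after composing with the second projection.

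\textbf{Main obstacle.} The real technical burden is the bookkeeping in the inductive step: correctly identifying $\fX_i^{\fX_{r-1}}$ with the iterated fiber product from \ref{UnivEdpfConst}, tracing how $\alpha_{r-1,i}$ specializes to $\alpha_{r-2,i}$ under $\pi_{r-1}$, and verifying that the base-change $f_i$ coincides with the pair $(f,g_i)$ under these identifications. All three rest on the base-change commutation of blowups from \ref{Blowingup} (together with the Cartesian statement cited from \cite{PM}); everything else is formal, and neither step requires any geometry beyond that.
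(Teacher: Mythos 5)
Your proposal is correct and follows essentially the same route as the paper: the classifying map is obtained by composing the last section $\sigma_r$ with the second projection $\fX_{r-1}^S \cong S\times_{\fX_{r-2}}\fX_{r-1}\to\fX_{r-1}$, with all the needed identifications coming from the Cartesian lemma of \cite{PM} and the base-change commutation of blowups in \ref{Blowingup}, and uniqueness extracted by the same diagram chase. The only difference is organizational — you run an induction on $r$ by truncating the family, whereas the paper builds the intermediate maps $g_1,\dots,g_r$ iteratively inside a single proof — but the two are the same argument.
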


\begin{proof}
Let $g_1:S\to \fX_0$ be the composition $S\stackrel{\sigma_1}{\longrightarrow} \fX_0^S=S\times_k X \stackrel{\mathrm{Pr_2}}{\longrightarrow} X$ and $g_{0}=\pi_0\circ g_1$.
Consider the following commutative diagram:
\begin{align*}
    \xymatrix{
    S \ar[d]_{\sigma_1}\ar[r]^{g_1} & \fX_0\ar[d]^{\Delta_0} \\
    \fX_0^S= S\times_{\fX_{-1}} \fX_0 \ar[r]^(.6){}\ar[d]_{\pi_0^S={g_1}^*\mathrm{Pr}_{0,1}} & \fY_0 \ar[d]^{\mathrm{Pr}_{0,1}} \ar[r]^(.5){\mathrm{Pr}_{0,2}} & \fX_0 \ar[d]^{\pi_{0}} \\
    S\ar[r]_{g_1} & \fX_0 \ar[r]_{\pi_{0}} & \fX_{-1}
    }
\end{align*}
Again by \cite[Lemma 1.3]{PM}, the above diagram is Cartesian, and therefore by \ref{Blowingup}, we have the Cartesian diagram:
\begin{align*}
    \xymatrix{
    \fX_1^S\cong S\times_{\fX_0} \fX_1 \ar[r]^(.6){}\ar[d]_{\pi_1^S} & \fX_1 \ar[d]^{\pi_1} \ar[r]^(.5){\xi_1} & \fX_0 \ar[d]^{\pi_{0}} \\
    S\ar[r]_{g_1} & \fX_0 \ar[r]_{\pi_{0}} & \fX_{-1}
    }
\end{align*}
Let $g_2:S\to \fX_1$ be the composition $S\stackrel{\sigma_2}{\longrightarrow} \fX_1^S \cong S\times_{\fX_0} \fX_1 \stackrel{\mathrm{Pr_2}}{\longrightarrow} \fX_1$.
Note that $g_1=\pi_1\circ g_2$.
Thus, we again have the following Cartesian diagram
\begin{align*}
    \xymatrix{
    S \ar[d]_{\sigma_2}\ar[r]^{g_2} & \fX_1 \ar[d]^{\Delta_1} \\
    \fX_1^S\cong S\times_{\fX_0} \fX_1 \ar[r] \ar[d]_{\pi_1^S} & \fY_1 \ar[d]^{\mathrm{Pr}_{1,1}} \ar[r]^(.5){\mathrm{Pr}_{1,2}} & \fX_1 \ar[d]^{\pi_{1}} \\
    S\ar[r]_{g_2} & \fX_1 \ar[r]_{\pi_{1}} & \fX_{0}
    }
\end{align*}
and thus $\fX_2^S\cong S\times_{\fX_1} \fX_2$.
Inductively, we have a morphism $g_i:S\to \fX_{i-1}$ such that $g_{i-1}=\pi_{i-1}\circ g_i$, 
and $\fX_{i}^S \cong S \times_{\fX_{i-1}} \fX_{i}$ for $i=1,\cdots, r$.
Let $f= g_r$. By the construction, for any $i=1,\cdots, r$,
\begin{align}\label{Di2}
    \xymatrix{
    S \ar[d]_{\sigma_i}\ar[rr]^{f} && \fX_{r-1} \ar[d]^{\alpha_{r-1,i}} \ar[r]^{\pi_{r-1,i-1}} & \fX_{i-1} \ar[d]^{\Delta_{i-1}} \\
    \fX_{i-1}^S\cong S\times_{\fX_{i-2}} \fX_{i-1} \ar[rr]^{f_{i-1}} \ar[d]_{\pi_{i-1}^S} && \fX_{i-1}^{\fX_{r-1}}\cong \fX_{r-1} \times_{\fX_{i-2}} \fX_{i-1} \ar[d]^{\pi_{i-1}^{\fX_r}} \ar[r] & \fY_{i-1} \ar[r]^{\mathrm{Pr}_{i-1,2}} \ar[d]^{\mathrm{Pr}_{i-1,1}} & \fX_{i-1} \ar[d]^{\pi_{i-1}} \\
    S\ar[rr]_{f} && \fX_{r-1} \ar[r]_{\pi_{r-1,i-1}} & \fX_{i-1} \ar[r]_{\pi_{i-1}} & \fX_{i-2}
    }
\end{align}
is a Cartesian diagram, and $f_i:\fX_{i}^S \to \fX_{i}^{\fX_{r-1}}$ is isomorphic to the  canonical morphism
\[
\fX_{i}^S\cong S\times_{\fX_{r-1}} \fX_{i}^{\fX_{r-1}} \stackrel{\mathrm{Pr_2}}{\longrightarrow} \fX_{i}^{\fX_{r-1}}.
\]
Therefore $f:(S,\sigma_\bullet) \to (\fX_{r-1},\alpha_{r-1,\bullet})$ is an $r$-edpf-morphism.

The uniqueness also follows from the diagram \ref{Di2}.
Suppose that such $f:S\to \fX_{r-1}$ exists and that the morphism $\pi_{r-1,i-2}\circ f:S\to \fX_{i-2}$ (the bottom row) is uniquely determined.
Chasing the diagram \ref{Di2}, we have 
\[
\pi_{r-1,i-1}\circ f = (\mathrm{Pr}_{i-1,2}\circ \Delta_{i-1}) \circ \pi_{r-1,i-1}\circ f = \mathrm{Pr}_{i-1,2}^S\circ\sigma_i,
\]
where $\mathrm{Pr}_{i-1,2}^S:\fX_{i-1}^S\to \fX_{i-1}$ is the composition of the morphism in the middle row of the diagram \ref{Di2}, 
i.e., the projection on the second factor.
Since $\pi_{r-1,i-2}\circ f$ is uniquely determined, 
so is $\mathrm{Pr}_{i-1,2}^S\circ\sigma_i$ because $\mathrm{Pr}_{i-1,2}^S$ is the pull-back of $\pi_{r-1,i-2}\circ f$ by $\pi_{i-1}$.
Thus $\pi_{r-1,i-1}\circ f$ is uniquely determined.
Note that $\pi_{r-1,-1}\circ f:S\to\Spec k$ is the structure morphism.
Obviously, it is uniquely determined.
Therefore, we conclude our claim by the induction on $i$.
\end{proof}

\begin{Rmk}
For $i=r+1$, we still have the following Cartesian diagram:
\begin{align*}
    \xymatrix{
    \fX_{r}^S \cong S\times_{\fX_{r-1}} \fX_{r} \ar[rr]^(.55){f_r} \ar[d]_{\pi_r^S} && \fX_{r}^{\fX_{r-1}} \cong \fX_{r} \ar[d]^{\pi_r^{\fX_{r-1}} = \pi_{r}} \\
    S\ar[rr]_{f} && \fX_{r-1}
    }
\end{align*}
Also we have the following commutative diagram:
\begin{align*}
\xymatrix{
\fX_{r}^S \ar[d]_{f_r} \ar[r]^{b_r^S} & \fX_{r-1}^S \ar[d]^{f_{r-1}} \ar[r]^{b_{r-1}^S} & \cdots\ar[r]^{b_1^S} & \fX_0^S \ar[r]^{\pi_0^S}\ar[d]^{f_{0}} & S \ar[d]^{} \\
\fX_{r} \ar[r]_{\xi_{r}} & \fX_{r-1} \ar[r]_{\xi_{r-1}} & \cdots\ar[r]_{\xi_{1}} & \fX_0 \ar[r]_{\xi_{0}} & \fX_{-1}
}
\end{align*}
where the columns are the projections on the second factors.
\end{Rmk}


Denote by $\xi_{j,i}:\fX_{j}\to \fX_{i}$ the composition $\xi_{j}\circ\cdots\circ \xi_{i+1}$ for $i<j$, and $\xi_{j,j}=\mathrm{Id}_{\fX_{j}}$.
Let $L$ be an effective Cartier divisor on $X$ and $\cL$ the pull-back divisor of $L$ to $\fX_{r}$ by $\xi_{r,0}$.
For $1\le i\le r$, denote by $E_i$ the exceptional divisor of the blowing up $b_i:\fX_{i}\to \fY_{i-1}$.
For $1\le i<r$, denote also by $\cE_i$ the pull-back divisor of $E_i$ to $\fX_{r}$ by $\xi_{r,i}$ and let $\cE_r=E_r$.
Note that $\cL$ and $\cE_i$ are relative effective Cartier divisors on $\fX_{r}/\fX_{r-1}$ (see \ref{RelEffDiv1} and \ref{RelEffDiv2}).

Similarly, denote by $b_{j,i}^S=b_{j}^S\circ\cdots\circ b_{i+1}^S$ for $i<j$ and $b_{j,j}^S=\mathrm{Id}_{\fX_j^S}$.
 Let $\cL^S$ be the pull-back divisor of $L$ to $\fX_{r}^S$ by the morphism $f_0\circ b_{r,0}^S:\fX_r^S\to X$.
Let $E_i^S$ be the exceptional divisor of the blowing up $b_i^S:\fX_{i}^S\to \fX_{i-1}^S$, and
$\cE_i^S$ the pull-back divisor of $E_i^S$ to $\fX_{r}^S$ by $b_{r,i}^S$.
Note that $\cL^S$ (resp., $\cE_i^S$) is in fact the pull-back divisor of $\cL$ (resp., $\cE_i$) by the base change $S\to \fX_r$.
In particular, $\cL^S$ and $\cE_i^S$ are relative effective Cartier divisors on $\fX_{r}^S$ over $S$ (\ref{RelEffDiv1}, \ref{RelEffDiv2}).
What we deduced is the following.

\begin{Prop}\label{RelEffDiv3}
Let $\fX_{r}^S\to S$ be an $r$-edpf, and let $\cE^S = m_1\cE_1^S+\cdots+m_r\cE_r^S$ for non-negative integers $m_i$.
Then $\cE^S$ is a relative effective Cartier divisor on $\fX_{r}^S/S$, 
and we have an exact sequence of flat $\mathcal{O}_S$-modules:
\[
0 \longrightarrow \mathcal{O}_{\fX_{r}^S}\left(-\cE^S\right) \longrightarrow \mathcal{O}_{\fX_{r}^S} \longrightarrow \mathcal{O}_{\cE^S} \longrightarrow 0.
\]
\end{Prop}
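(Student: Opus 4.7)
The plan is to reduce the statement to the two technical lemmas \ref{RelEffDiv1} and \ref{RelEffDiv2}, supplemented by the general fact that a sum of relative effective Cartier divisors is again a relative effective Cartier divisor. The exact sequence at the end and its flatness assertion will then be a formal consequence of the Cartier property combined with the smoothness of $\pi_r^S$.

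The first step is to check that the hypotheses of \ref{Blowingup}, \ref{RelEffDiv1} and \ref{RelEffDiv2} are available at every stage of the tower $\fX_r^S \to \cdots \to \fX_0^S \to S$. By induction on $j$, \ref{Blowingup}(3) guarantees that $\pi_{j-1}^S$ is smooth; since $\sigma_j$ is a section of this smooth, separated morphism, the closed subscheme $\sigma_j(S) \subset \fX_{j-1}^S$ is isomorphic to $S$ as an $S$-scheme and in particular smooth over $S$. Thus each blowing up $b_j^S : \fX_j^S \to \fX_{j-1}^S$ falls into the setting of \ref{RelEffDiv1} and \ref{RelEffDiv2}.

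The second step handles one summand at a time. Fix an index $i$; by \ref{RelEffDiv1} applied to the blowing up $b_i^S$, the divisor $m_i E_i^S$ is a relative effective Cartier divisor on $\fX_i^S/S$. Pulling back successively along $b_{i+1}^S,\ldots,b_r^S$ and applying \ref{RelEffDiv2} at each stage then yields that $m_i \cE_i^S$ is a relative effective Cartier divisor on $\fX_r^S/S$. To assemble these summands I would establish the standard fact that sums of relative effective Cartier divisors are relative effective Cartier divisors: if $\mathscr{J}_1, \mathscr{J}_2$ are invertible ideal sheaves defining relative effective Cartier divisors $D_1, D_2$, the local exact sequence
\[
0 \longrightarrow \mathscr{J}_1/\mathscr{J}_1\mathscr{J}_2 \longrightarrow \mathcal{O}/\mathscr{J}_1\mathscr{J}_2 \longrightarrow \mathcal{O}/\mathscr{J}_1 \longrightarrow 0,
\]
together with the identification $\mathscr{J}_1/\mathscr{J}_1\mathscr{J}_2 \cong \mathscr{J}_1 \otimes_{\mathcal{O}} \mathcal{O}/\mathscr{J}_2$ (locally isomorphic to $\mathcal{O}/\mathscr{J}_2$ because $\mathscr{J}_1$ is invertible), reduces flatness of $\mathcal{O}/\mathscr{J}_1\mathscr{J}_2$ over $\mathcal{O}_S$ to that of $\mathcal{O}/\mathscr{J}_1$ and $\mathcal{O}/\mathscr{J}_2$. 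Iterating over $i$, $\cE^S = \sum_i m_i \cE_i^S$ is a relative effective Cartier divisor on $\fX_r^S/S$.

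Finally, the Cartier property immediately yields the desired short exact sequence
\[
0 \longrightarrow \mathcal{O}_{\fX_r^S}(-\cE^S) \longrightarrow \mathcal{O}_{\fX_r^S} \longrightarrow \mathcal{O}_{\cE^S} \longrightarrow 0,
\]
and all three sheaves are flat over $\mathcal{O}_S$: the middle one because $\pi_r^S$ is smooth, the right-hand one by the definition of a relative effective Cartier divisor, and the left-hand one because $\mathcal{O}_{\fX_r^S}(-\cE^S)$ is locally free of rank one on $\fX_r^S$. The main technical obstacle I anticipate is the bookkeeping associated with the iterated pull-back, namely checking carefully that the smoothness hypotheses really do propagate all the way up the tower so that \ref{RelEffDiv2} applies at each stage; once that is nailed down, both the additivity step and the exact sequence are formal.
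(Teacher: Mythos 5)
Your proposal is correct and follows essentially the same route as the paper, which presents this proposition as a direct consequence of Lemmas \ref{RelEffDiv1} and \ref{RelEffDiv2} applied up the tower of blowing ups (the paper gives no separate proof beyond the preceding discussion). The only point you spell out that the paper leaves implicit is the additivity of relative effective Cartier divisors, and your local exact-sequence argument for that step is the same dévissage used in the paper's proof of \ref{RelEffDiv2}, so it fits seamlessly.
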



\vspace{3em}
\section{Lower semi-continuity theorems}\label{S4}

In this section, $X=\P^2$, $S$ is an algebraic variety over $k$, and we only consider those invertible sheaves on $\fX_{r}^S$ generated by $\cL^S$ and $\cE^S_i$.

\subsection{Effective cones of blowing up surfaces of $\P^2$}

\begin{Note}
Let $\fX_{r}^S\to S$ be an $r$-edpf.
Let $\mathscr{F}$ be an invertible sheaf on $\fX_{r}$.
Denote by $\mathscr{F}^S$ the pull-back sheaf of $\mathscr{F}$ to $\fX_{r}^S$ by the canonical morphism $\fX_{r}^S\to \fX_{r}$.
Note that any invertible sheaf on $\fX_{r}^S$ is of this form.
\end{Note}

\begin{Note}
For any (closed) point $s\in S$, we have a morphism of schemes $\Spec k(s)\to S$.
We denote by $\fX_{r}^s$ the induced $r$-edpf $\fX_r^{\Spec k(s)}\to \Spec k(s)$.
\end{Note}

\begin{Lem}\label{MainLem1}
Let $\fX_{r}^S\to S$ be an $r$-edpf and
$\mathscr{F}^S$ an invertible sheaf on $\fX_{r}^S$.
If $\mathscr{F}^s$ is effective for general $s\in S$, then $\mathscr{F}^s$ is effective for all $s\in S$.
\end{Lem}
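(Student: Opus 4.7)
The plan is to deduce the lemma directly from upper semi-continuity of cohomology dimensions applied to the family $\pi_r^S : \fX_r^S \to S$. First I would verify the hypotheses: by iterating Proposition \ref{Blowingup} along the construction in Definition \ref{edpf}, the morphism $\pi_r^S$ is projective and smooth, hence both proper and $S$-flat. Since $\mathscr{F}^S$ is invertible, it is locally free over $\mathcal{O}_{\fX_r^S}$, and in particular flat over $\mathcal{O}_S$. These are precisely the hypotheses needed to invoke the semi-continuity theorem (\cite[Theorem III.12.8]{RH}).

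It follows that $s \mapsto \dim_{k(s)} H^0(\fX_r^s, \mathscr{F}^s)$ is upper semi-continuous on $S$, so the locus
\[
V = \left\{ s \in S : H^0(\fX_r^s, \mathscr{F}^s) \neq 0 \right\}
\]
is closed in $S$. Because $\mathscr{F}^s$ is invertible on the projective variety $\fX_r^s$, the non-vanishing of $H^0(\fX_r^s, \mathscr{F}^s)$ is equivalent to $\mathscr{F}^s$ being effective: a non-zero global section cuts out an effective Cartier divisor whose associated sheaf is $\mathscr{F}^s$, and conversely an isomorphism $\mathscr{F}^s \cong \mathcal{O}(D)$ with $D$ effective yields such a section. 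Hence $V$ coincides with the effectivity locus of $\mathscr{F}^S$.

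The hypothesis ``$\mathscr{F}^s$ is effective for general $s\in S$'' says $V$ contains a dense open subset of $S$. Treating each of the finitely many irreducible components of the algebraic variety $S$ separately, a closed subset of an irreducible variety that contains a dense open subset must equal the whole variety. Therefore $V = S$, which proves that $\mathscr{F}^s$ is effective for every $s\in S$.

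The argument has no real obstacle: it is essentially a one-step application of the semi-continuity theorem. The only care needed is the bookkeeping to confirm flatness and projectivity via \ref{Blowingup}, the translation between ``effective'' and ``admits a non-zero global section'' for invertible sheaves, and the elementary reduction to irreducible components of $S$.
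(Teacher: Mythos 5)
Your proof is correct and follows exactly the paper's argument: the paper likewise reduces effectivity to $\dim_k H^0(\fX_r^s,\mathscr{F}^s)\ge 1$ and invokes upper semi-continuity of cohomology, merely leaving implicit the hypothesis-checking and the passage from a closed set containing a dense open subset to all of $S$ that you spell out.
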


\begin{proof}
Note that effectivity of $\mathscr{F}^s$ is equivalent to saying that $\dim_k H^0(\fX_{r}^s, \mathscr{F}^s)\ge 1$,
and this follows from the upper semi-continuity theorem for the cohomology groups.
\end{proof}

\begin{Rmk}\label{Marking}
For any closed point $s\in S$, $\cL^s$ and $\cE_i^s$ form a basis of the divisor class group $\Cl(\fX_{r}^s)$ of $\fX_{r}^s$.
In this way, we have a canonical isomorphism on $\Cl(\fX_{r}^s)$, and thus we can compare their effective cones:
Let $I^{1,r}=\mathbb{Z}^{r+1}$ equipped with the symmetric bilinear form defined by the diagonal matrix $\mathrm{diag}(1,-1,\cdots,-1)$ with respect to the standard basis
\[
\mathbf{e}_0=(1,0,\cdots,0), ~ \mathbf{e}_1=(0,1,0\cdots,0), ~\cdots, ~ \mathbf{e}_r=(0,\cdots,0,1)
\]
of $\mathbb{Z}^{r+1}$.
There is a canonical isomorphism $I^{1,r}\stackrel{\simeq}{\longrightarrow} \Cl(\fX_{r}^s)$ defined by
\[
\mathbf{e}_0 \longmapsto \cL^s, ~ \mathbf{e}_1 \longmapsto \cE_1^s, ~\cdots, ~ \mathbf{e}_r \longmapsto \cE_r^s.
\]
Hence we obtain a canonical inclusion
\[
\mathrm{Eff}(\fX_{r}^s) \hookrightarrow \Cl(\fX_{r}^s) \cong I^{1,r},
\]
where $\mathrm{Eff}(\fX_{r}^s)$ is the monoid of the classes of effective divisors on $\fX_{r}^s$.
Under this isomorphism, the vector $\mathbf{k}=-3 \mathbf{e}_0 + \mathbf{e}_1 +\cdots+ \mathbf{e}_r$ corresponds to the canonical class of $\fX_{r}^s$.
\end{Rmk}


The main interest about $\mathrm{Eff}(\fX_{r}^s)$ is whether it is finitely generated or not.
When $r=0$, $\fX_{r}^s\cong\P^2$ and hence the effective cone is generated by the class of the line $\cL^s$.
If $r=1$, the effective cone is generated by two classes: $\cL^s-\cE_1^s$ and $\cE_1^s$.
Now we focus on $r\ge2$ cases.
The result is well-known for smooth del Pezzo surfaces.
Precisely, if $\fX_{r}^s$ is a smooth del Pezzo surface the effective cone is generated by the classes of negative curves if $r\le 7$, and by the classes of negative curves and the anticanonical class if $r=8$ (\cite[Proposition 5.2.2.1]{IA}).
Over $\mathbb{Q}$, it is generated by the classes of negative curves (\cite[Proposition 5.2.1.10]{IA}).
The similar result also holds for any $r$-essentially distinct points on a conic (\cite[Lemma III.i.1]{BH2}).
Also \cite[Proposition 4.1]{GHM} proves the finitely generatedness of $\overline{\mathrm{Eff}}(\fX_{r}^s)$ over $\mathbb{Q}$ for any distinct $r\le 8$ points.
We now prove the finitely generatedness of $\mathrm{Eff}(\fX_{r}^s)$ over $\mathbb{Z}$ for any $r$-essentially distinct points with $2\le r\le8$.

Let $X=\fX_{r}^s$ with $2\le r\le 8$. 
Define the followings (cf. \cite{GHM}, \cite{GH1}).
\begin{itemize}
\item $\mathrm{NEG}(X)=\{ C\in\Cl(X) \,|\, C^2<0, \, C \textrm{ is a prime divisor} \}$
\item $\mathscr{B}_r=\{ \mathbf{e}_i \,|\, 1\le i\le r \}$ ($\mathscr{B}$ is for \textit{blow up} of a point)
\item $\mathscr{V}_r=\{ \mathbf{e}_{i_1}-\mathbf{e}_{i_2}-\cdots-\mathbf{e}_{i_s} \,|\, 2\le s\le r, \, 1\le i_1 < \cdots < i_s\le r \}$ ($\mathscr{V}$ is for \textit{vertical})
\item $\mathscr{L}_r=\{ \mathbf{e}_{0}-\mathbf{e}_{i_1}-\cdots-\mathbf{e}_{i_s} \,|\, 2\le s\le r, \, 1\le i_1 < \cdots < i_s\le r \}$ ($\mathscr{L}$ is for \textit{line})
\item $\mathscr{Q}_r=\{ 2\mathbf{e}_{0}-\mathbf{e}_{i_1}-\cdots-\mathbf{e}_{i_s} \,|\, 5\le s\le r, \, 1\le i_1 < \cdots < i_s\le r \}$ ($\mathscr{Q}$ is for \textit{quadric})
\item $\mathscr{C}_r=\{ 3\mathbf{e}_{0}-2\mathbf{e}_{i_1}-\cdots-\mathbf{e}_{i_s} \,|\, 7\le s\le r, \, 1\le i_1, \cdots, i_s\le r \textrm{ are distinct} \}$ ($\mathscr{C}$ is for \textit{cubic})
\item $\mathscr{M}_8=\{-\mathbf{k} + \mathbf{e}_{0}-\mathbf{e}_{i_1}-\mathbf{e}_{i_2}-\mathbf{e}_{i_3}, \, -\mathbf{k}+ 2\mathbf{e}_{0}-\mathbf{e}_{j_1}-\cdots-\mathbf{e}_{j_6}, \, -2\mathbf{k}-\mathbf{e}_{k} \,|\, 1\le i_1 < i_2 < i_3\le 8, \, 1\le j_1 < \cdots < j_6\le 8, \, 1\le k\le 8\}$ (only for $r=8$)
\item $\mathscr{N}_r = \mathscr{B}_r \cup \mathscr{V}_r \cup \mathscr{L}_r \cup \mathscr{Q}_r \cup \mathscr{C}_r \cup \mathscr{M}_8$
\end{itemize}

\begin{Prop}
$\mathrm{NEG}(X) \subset \mathscr{N}_r$ under the isomorphism in \ref{Marking}.
\end{Prop}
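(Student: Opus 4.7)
My plan is to analyze a prime divisor $C$ with $C^2<0$ by writing its class, under the canonical isomorphism of Remark \ref{Marking}, as $C = d\mathbf{e}_0 - \sum_{i=1}^r m_i \mathbf{e}_i$. Since $\cL^s$ is the pull-back of a line from $\P^2$ and hence nef, we have $d = \cL^s \cdot C \ge 0$; when $d\ge 1$, the $m_i$ are automatically non-negative, being multiplicities of an irreducible plane curve at the essentially distinct points. The argument then splits into two cases according to whether $d$ vanishes.

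If $d = 0$, the direct image $b_*C$ is zero, so $C$ lies in the exceptional locus of $b: X \to \P^2$, which is the union of the strict transforms $\widehat F_1,\ldots,\widehat F_r$ of the exceptional divisors of the individual blow-ups $b_i: X_i \to X_{i-1}$. Irreducibility of $C$ forces $C=\widehat F_i$ for some $i$. A direct computation (induction on blow-up step, together with the proximity structure) gives the class of $\widehat F_i$ on $X$ as
\[
\widehat F_i \,=\, \mathbf{e}_i - \sum_{j : p_j \text{ proximate to } p_i} \mathbf{e}_j,
\]
which lies in $\mathscr{B}_r \cup \mathscr{V}_r$.

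If $d \ge 1$, then $b(C)$ is an irreducible plane curve of degree $d$. Since $C$ is prime, $p_a(C)\ge 0$, and adjunction on $X$ reads
\[
(d-1)(d-2) \,\ge\, \sum_{i=1}^r m_i(m_i-1).
\]
Combined with $C^2<0$, i.e.\ $\sum m_i^2 \ge d^2+1$, this yields $\sum m_i \ge 3d-1$ and $\sum m_i^2 \le (d-1)(d-2)+\sum m_i$. Cauchy--Schwarz $(\sum m_i)^2 \le r\sum m_i^2$ then forces $d\le 3$ when $r \le 7$ and $d\le 6$ when $r=8$. For each admissible $d$ a direct enumeration of the non-negative integer solutions gives: $d=1$ produces $\mathscr{L}_r$, $d=2$ produces $\mathscr{Q}_r$ (using $\sum m_i^2 = \sum m_i \ge 5$ from $m_i\in\{0,1\}$), $d=3$ produces $\mathscr{C}_r$ (adjunction forces at most one $m_i=2$; then $\sum m_i^2>9$ compels at least six further indices with $m_j=1$), and $d\in\{4,5,6\}$ (only available when $r=8$) yields the three families $-\mathbf{k}+\mathbf{e}_0-\mathbf{e}_{i_1}-\mathbf{e}_{i_2}-\mathbf{e}_{i_3}$, $-\mathbf{k}+2\mathbf{e}_0-\mathbf{e}_{j_1}-\cdots-\mathbf{e}_{j_6}$, and $-2\mathbf{k}-\mathbf{e}_k$ of $\mathscr{M}_8$.

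The main obstacle will be the last step when $r=8$: here all the inequalities become sharp and one has to check that no multiplicity pattern involving some $m_i\ge 3$ produces extra solutions (apart from the single pattern $(3,2,2,2,2,2,2,2)$ giving $-2\mathbf{k}-\mathbf{e}_k$ at $d=6$). This reduces to a short bookkeeping exercise case-by-case in $d=4,5,6$; in each case adjunction and Cauchy--Schwarz leave essentially no slack, so writing the system in terms of the counts of indices with $m_i=0,1,2,3,\ldots$ produces a small linear system whose only non-negative integer solutions with eight variables are exactly the three listed families, completing the inclusion $\mathrm{NEG}(X)\subset \mathscr{N}_r$.
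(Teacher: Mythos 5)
Your argument is correct in substance but follows a genuinely different route from the paper's. The paper disposes of $(C\cdot L)=0,1,2$ exactly as you do (components of the exceptional locus, strict transforms of lines and conics), but for $(C\cdot L)\ge 3$ it brings in an anticanonical divisor $D\in|-K_X|$: if $(C\cdot D)<0$ then $C$ is a component of $D$ and degree reasons force $C\in\mathscr{C}_r$, while if $(C\cdot D)\ge0$ adjunction pins $C$ down to being a $(-2)$-root or a $(-1)$-exceptional class, and the paper then simply cites Dolgachev's classification of such classes in $I^{1,r}$ to land in $\mathscr{N}_r$. You instead avoid both the effectivity of $-K_X$ and the appeal to the root/exceptional classification, replacing them by the adjunction inequality $\sum m_i(m_i-1)\le(d-1)(d-2)$ together with $C^2<0$ and Cauchy--Schwarz, followed by a direct Diophantine enumeration. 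Your version is more self-contained and elementary; the paper's is shorter because it outsources the hardest bookkeeping (the $r=8$, $d\ge4$ classes) to the known description of exceptional vectors in $\mathbb{E}_8$.

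One small repair is needed: for $r=8$ your inequality $(3d-1)^2\le r(d^2+1)$ only gives $d\le 7$, not $d\le 6$; the case $d=7$ must be excluded by noting that it forces equality everywhere ($\sum m_i=20$, $\sum m_i^2=50$, all $m_i$ equal), which is incompatible with integrality. With that addition, and with the $d=4,5,6$ enumerations written out (they do close up as you indicate: the unique multiplicity patterns are $(2^3,1^5)$, $(2^6,1^2)$ and $(3,2^7)$, matching the three families in $\mathscr{M}_8$), the proof is complete.
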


\begin{proof}
We basically follows the proof of \cite[Lemma 2.1]{GH2}.
Let $C$ be a prime divisor on $X$ with negative self-intersection.
Since $L$ is nef, $(C\cdot L)\ge0$.

If $(C\cdot L)=0$, $C=m_1E_1+\cdots+m_rE_r$.
Note that $m_i>0$ for some $i$. If not, $-C\ge0$ and thus $C=0$, a contradiction.
Now $(C\cdot E_i)=-m_i <0$ which implies that $C$ is an irreducible component of the effective divisor $E_i$.
Therefore $C\in \mathscr{B}_r \cup \mathscr{V}_r$.

If $(C\cdot L)=1$, $C$ is a strict transformation of a line on $\P^2$, and hence $C\in\mathscr{L}_r$.
Similarly, If $(C\cdot L)=2$, $C$ is a strict transformation of a quadric, and therefore $C\in\mathscr{Q}_r$.

Suppose $(C\cdot L)\ge3$ and let $D\in|-K_X|$.
If $(C\cdot D)<0$, $C$ is an irreducible component of $D$.
So $D-C\ge0$ and $((D-C)\cdot L)=3-(C\cdot L)\ge0$, hence $(C\cdot L)=3$ and $C\in\mathscr{C}_r$.
On the other hand, if $(C\cdot D)\ge0$, we have $-2\le C^2-(C\cdot D) <0$ by the adjunction formula.
There are only two cases: $C^2=-2$ and $(C\cdot D)=0$, or $C^2=-1$ and $(C\cdot D)=-1$.
Therefore, $C\in \mathscr{N}_r$ (cf. \cite[Proposition 8.2.7]{ID} and \cite[Proposition 8.2.19]{ID}).
\end{proof}

\begin{Prop}\label{FiniteGen}
$\mathrm{Eff}(X)$ is generated by $\mathrm{NEG}(X)$ if $2\le r\le 7$ and by $\mathrm{NEG}(X) \cup \{ -K_X \}$ if $r=8$.
In particular, $\mathrm{Eff}(X)$ is finitely generated.
\end{Prop}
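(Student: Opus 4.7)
The plan is to show $\mathrm{Eff}(X) \subseteq M$, where $M$ is the submonoid of $\Cl(X)$ generated with non-negative integer coefficients by $\mathrm{NEG}(X) \cup (\{-K_X\}$ if $r=8)$. The reverse inclusion holds because every element of $\mathrm{NEG}(X)$ is represented by an effective prime divisor, and $-K_X$ is effective for $r \le 8$ (Riemann--Roch gives $h^0(-K_X) \ge \chi(-K_X) = 10 - r \ge 2$). So only $\mathrm{Eff}(X) \subseteq M$ requires proof. I would fix an ample divisor $H$ on $X$; since $D \cdot H > 0$ for any nonzero effective $D$, one can proceed by strong induction on the non-negative integer $D \cdot H$.

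Given a nonzero effective $D$, decompose it into prime components $D = \sum n_i C_i$ with $n_i > 0$. If at least one $C_i$ lies in $\mathrm{NEG}(X)$, then $D - C_i$ is effective with strictly smaller $H$-degree and the inductive hypothesis finishes the job. Otherwise every prime component $C_i$ satisfies $C_i^2 \ge 0$; since distinct irreducible curves on a surface meet non-negatively, each $C_i$ is in fact nef, and hence so is $D$. It therefore suffices to handle the case of a single irreducible nef curve $C$. For such a $C$, I would invoke the rational version of the finite-generation statement --- namely the analogue of \cite[Proposition 4.1]{GHM} extended from distinct to essentially distinct points, whose key input is exactly the finiteness of $\mathrm{NEG}(X) \subset \mathscr{N}_r$ established in the previous proposition --- to obtain a decomposition $C = \sum q_N N$ with $q_N \in \mathbb{Q}_{\ge 0}$ and $N \in \mathrm{NEG}(X) \cup (\{-K_X\}$ if $r=8)$. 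One then upgrades this to integer coefficients by case analysis on $C \cdot L$, using that every generator in $\mathscr{N}_r \cup \{-K_X\}$ has $L$-degree in $\{0,1,2,3\}$ and that the Hodge index theorem sharply restricts the possible numerical types $(C \cdot L, C^2, C \cdot \cE_i)$ of an irreducible nef class $C$: in each case one exhibits a specific generator $N_0$ such that $C - N_0$ is effective, and closes by the inductive hypothesis applied to $C - N_0$ (which has $H \cdot (C - N_0) < H \cdot C$).

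The main obstacle is precisely this last step: lifting the rational decomposition to an honest integer decomposition when $C$ is an irreducible nef curve. Rational generation of the effective cone is considerably softer than integer generation of the effective monoid, and one has to verify by hand that a suitable generator $N_0$ lies in the base locus (or at least in some effective representative) of $|C|$. The subtlety is that the answer depends on the specific geometry of the $r$-essentially distinct points --- whether small-degree curves through prescribed subsets of the points are irreducible, reducible, or degenerate --- so the reasoning cannot be purely numerical. The saving grace is that $\mathscr{N}_r$ is finite and the $L$-degrees of its elements are bounded by $3$ (with $-K_X$ contributing one extra generator only when $r=8$), so the classification of irreducible nef $C$ reduces to finitely many numerical types, each dispatched by an elementary geometric argument on $\P^2$.
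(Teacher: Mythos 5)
Your overall skeleton (induction on the $H$-degree, splitting off components lying in $\mathrm{NEG}(X)$, reducing to nef classes) agrees with the first half of the paper's argument, but the decisive step --- handling an irreducible nef curve $C$ --- is where your plan has a genuine gap. You propose to start from a rational decomposition of $C$ in the spirit of \cite[Proposition 4.1]{GHM} and then ``upgrade'' it to non-negative integer coefficients by a case analysis on the numerical type of $C$. Two problems. First, the upgrade is exactly the hard content of the proposition and you do not carry it out: a rational expression $C=\sum q_N N$ gives no control on whether $C-N_0$ is effective for any single generator $N_0$; deciding this requires knowing that $N_0$ occurs in some member of $|C|$, which is a geometric, not a numerical, question. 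Second, your claim that the analysis ``reduces to finitely many numerical types'' of irreducible nef $C$ is false: while the generators in $\mathscr{N}_r\cup\{-K_X\}$ have $L$-degree at most $3$, an irreducible nef curve can have arbitrarily large $L$-degree (already the classes $dL$ for all $d\ge1$ occur), so the case analysis you describe is not finite and cannot be closed as stated.

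The paper closes this step by a different mechanism that your plan is missing. Given the nef class $G_m=dL-m_1E_1-\cdots-m_rE_r$ on the special surface, it observes (via Lemma \ref{MainLem1}, applied to the would-be negative curves) that the same class remains nef on the blow-up of $r$ points in general position, i.e.\ on a smooth del Pezzo surface, where the effective monoid is already known to be generated over $\mathbb{Z}_{\ge0}$ by the $(-1)$-classes (together with $-K$ when $r=8$) by \cite[Proposition 5.2.2.1]{IA}. Each exceptional class $E$ in that integral decomposition, and the difference $G_m-E$, is effective on the general surface, hence effective on the special surface by the semicontinuity Lemma \ref{MainLem1}; if $E$ is not prime on the special surface one replaces it by a negative irreducible component $N$ of it, and the induction on $(G\cdot A)$ closes with $G_m-N$. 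In other words, the integrality you are trying to establish by hand is imported wholesale from the smooth del Pezzo case and transported back by upper semicontinuity of $h^0$; this is the idea you would need to add (or substitute for your rational-cone step) to make the proof work.
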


\begin{proof}
Fix an ample divisor $A$ on $X$ and let $D\in|-K_X|$.
Let $G$ be an effective divisor on $X$. We apply the induction on $(G.A)$.
Write $G=G_m+G_f$, where $G_m$ is the moving part and $G_f$ is the fixed part.
Suppose $G_f\ne 0$.
Let $C$ be an irreducible component of $G_f$.
If $C^2\ge0$, $C$ is nef and $(C.D)\ge0$.
Hence $h^2(X,C) = h^0(X,K_X-C)=0$ for $((K_X-C)\cdot L)<0$, and $h^0(C)\ge 1 + \frac{1}{2}(C^2-(K_X\cdot C))$.
If $(C.D)=0$, $C^2< 0$ by Hodge index theorem, a contradiction.
Thus $(C.D)\ge 1$ and $h^0(C)\ge 2$.
Hence $C$ can not be a fixed component of $|G|$.
This implies that $C^2<0$, that is, $G_f$ is a non-negative sum of negative curves on $X$.

On the other hand, suppose $G_m\ne 0$. 
Note that $G_m$ is nef.
By \ref{MainLem1}, the class $G_m = dL-m_1E_1 -\cdots -m_rE_r$ remains nef when we move the points to general position so that the blowing up surface is a smooth del Pezzo surface.
Note that any nef divisor on a smooth del Pezzo surface is effective.
Suppose first $r\le 7$.
Then the new class $G_m'=dL'-m_1E_1'-\cdots -m_rE_r'$ can be written as a non-negative sum of $(-1)$-curves.
So there exists an effective divisor $E$ which is of exceptional class on $X$ such that $G_m-E\ge 0$.
If $E$ is a prime divisor, take $N=E\in\mathrm{NEG}(X)$.
If not, there is $N\in\mathrm{NEG}(X)$ which is an irreducible component of $E$.
Otherwise $E^2\ge0$, a contradiction.
So, $G':=G_m-N$ is effective in both cases.
Note that $(G\cdot A)>(G'\cdot A)\ge0$.
Hence the assertion follows from the induction.
When $r=8$, $G_m'$ is a non-negative sum of $(-1)$-curves and $-K_X$.
Thus, as we have seen before, either $G'=G_m-N$ is effective for some $N\in\mathrm{NEG}(X)$ or $G'=G_m-N$ is effective for some $N\in|-K_X|$.
In both cases, $(G\cdot A)>(G'\cdot A)\ge0$, and applying the induction on $(G\cdot A)$, we conclude our assertion.
\end{proof}

\vspace{1em}
\subsection{Lower-semi continuity theorems}
We always assume $r\le 8$.

\begin{Lem}\label{MainLem2}
Let $\fX_{r}^S\to S$ be an $r$-edpf.
\begin{enumerate}
    \item There exists a non-empty open subset $U\subset S$ such that $\mathrm{Eff}(\fX_{r}^{u_1})=\mathrm{Eff}(\fX_{r}^{u_2})$ for all $u_1,u_2\in U$.
    \item If $D^{u}=d\cL^{u}-m_1\cE_1^{u}-\cdots-m_r\cE_r^{u}\in\mathrm{Eff}(\fX_{r}^{u})$ for some $u\in U$, then $D^u\in\mathrm{Eff}(\fX_{r}^{u})$ for all $u\in U$.
\end{enumerate}
\end{Lem}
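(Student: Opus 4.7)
The plan is to exploit Proposition \ref{FiniteGen} together with the fact (from the preceding proposition) that all classes of negative curves on any fiber $\fX_r^s$ lie in the \emph{fixed} finite set $\mathscr{N}_r\subset I^{1,r}$. Setting $\mathscr{G}:=\mathscr{N}_r \cup \{-\mathbf{k}\}$ (the canonical class is added to accommodate $r=8$; its presence is harmless when $r\le 7$), this gives a single finite subset of $I^{1,r}$ from which the generators of $\mathrm{Eff}(\fX_r^s)$ are drawn, uniformly across all $s\in S$, via the canonical marking of Remark \ref{Marking}. Consequently, the effective cone of $\fX_r^s$ is completely determined by the finite combinatorial datum $\mathrm{Eff}(\fX_r^s)\cap\mathscr{G}$. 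The cases $r\in\{0,1\}$ are trivial, so I assume $r\ge 2$, and I may assume $S$ is irreducible by passing to one of its components (any non-empty open in a single component is a non-empty open in $S$ once one throws in the complement of the other components).

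For each $C\in\mathscr{G}$, the locus
\[
V_C := \{s\in S : C^s\in\mathrm{Eff}(\fX_r^s)\}
\]
is closed in $S$ by upper semi-continuity of $h^0$ along fibers of the flat projective family $\fX_r^S\to S$, the argument underlying Lemma \ref{MainLem1}. By Lemma \ref{MainLem1} itself, $V_C=S$ if and only if $C$ is effective on the generic fiber. Let $\mathcal{E}:=\{C\in\mathscr{G}: V_C=S\}$. For each $C\in\mathscr{G}\setminus\mathcal{E}$, $V_C$ is then a proper closed subset of the irreducible $S$, so
\[
U := S\setminus\bigcup_{C\in\mathscr{G}\setminus\mathcal{E}} V_C
\]
is a non-empty open subset of $S$, and by construction $\mathrm{Eff}(\fX_r^u)\cap\mathscr{G}=\mathcal{E}$ for every $u\in U$.

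To conclude (1), apply Proposition \ref{FiniteGen}: the monoid $\mathrm{Eff}(\fX_r^u)$ is generated by $\mathrm{NEG}(\fX_r^u)$, together with $-\mathbf{k}$ when $r=8$, and these generators all lie in $\mathrm{Eff}(\fX_r^u)\cap\mathscr{G}=\mathcal{E}$; hence $\mathrm{Eff}(\fX_r^u)\subseteq\langle\mathcal{E}\rangle$. Conversely, every $C\in\mathcal{E}$ is effective on $\fX_r^u$ by the definition of $\mathcal{E}$, so $\langle\mathcal{E}\rangle\subseteq\mathrm{Eff}(\fX_r^u)$. Thus $\mathrm{Eff}(\fX_r^u)=\langle\mathcal{E}\rangle$ independently of $u\in U$, which is (1). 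Part (2) is then immediate, since the divisor $D^u$ is identified with a fixed element of $I^{1,r}$ under the marking, and $D^u\in\mathrm{Eff}(\fX_r^u)=\langle\mathcal{E}\rangle=\mathrm{Eff}(\fX_r^{u'})$ for any $u,u'\in U$. The only substantive obstacle is the uniform bound $\mathrm{NEG}(\fX_r^s)\subset\mathscr{N}_r$ and the finite generation of $\mathrm{Eff}(\fX_r^s)$ by these classes; this is exactly what the hypothesis $r\le 8$ purchases via the preceding subsection, and once it is in hand the rest of the argument is purely formal stratification by a finite invariant plus upper semi-continuity.
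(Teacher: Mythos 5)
Your argument is correct and is essentially the paper's own proof: both remove, for each class in the fixed finite set $\mathscr{N}_r$ (you add $-\mathbf{k}$, which is harmless), the proper closed locus where it is effective, and then use Proposition \ref{FiniteGen} to conclude that on the remaining open set the effective monoid is generated by one and the same subset of effective generators, which gives both assertions. The only cosmetic differences are your reduction to irreducible $S$ (the paper handles reducible $S$ directly, since only those loci that are proper in $S$ are removed) and your packaging of the conclusion as $\mathrm{Eff}(\fX_r^u)=\langle\mathcal{E}\rangle$ rather than decomposing an arbitrary effective class and transporting it fiber by fiber.
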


\begin{proof}
If $r=0,1$ then every $\fX_{r}^s$ is isomorphic, so we can take $U=S$.
Now assume $r\ge2$ and let $\mathscr{F}^S\in \mathscr{N}_r$.
By the semi-continuity theorem, the set 
\[
B_{\mathscr{F}^S} = \{ s\in S \,|\, \dim_{k} H^0(\fX_{r}^s, \mathscr{F}^s)\ne 0 \}
\]
is closed in $S$. Let
\[
U=S - \bigcup_{\mathscr{F}^S\in \mathscr{N}_r} B_{\mathscr{F}^S}, 
\]
where $\mathscr{F}^S$ runs over the classes in $\mathscr{N}_r$ such that $B_{\mathscr{F}^S}\subsetneq S$.
Let $u_1,u_2\in U$ be arbitrary.
Let $D^{u_1}$ be a class of an effective divisor on $\fX_{r}^{u_1}$.
By \ref{FiniteGen}, $\mathrm{Eff}(\fX_{r}^{u_1})$ is generated by $NEG(\fX_{r}^{u_1})$ and the anticanonical class $-\mathbf{k}$ which is effective.
Thus, we can write
\[
D^{u_1} = \sum_i a_i \alpha_i^{u_1} + b(-\mathbf{k}), ~ a_i,b\in\mathbb{Z}_{\ge0},
\]
for some $\alpha_i^{S}\in \mathscr{N}_r$ with effective $\alpha_i^{u_1}$.
Note that
\[
D^{u_1} = \sum_i a_i \alpha_i^{u_1} + b(-\mathbf{k}) 
= (d\cL^S-m_1\cE_1^S-\cdots-m_r\cE_r^S)|_{\fX_{r}^{u_1}}
\]
for some non-negative integers $d,m_i$.
Denote by $D^{u_2}$ the corresponding divisor for $u_2\in U$, then
\[
D^{u_2} = (d\cL^S-m_1\cE_1^S-\cdots-m_r\cE_r^S)|_{\fX_{r}^{u_2}}
= \sum_i a_i \alpha_i^{u_2} + b(-\mathbf{k}).
\]
By the choice of $U$, $\alpha_i^{u_2}$ is effective, and so is $D^{u_2}$.
This verifies the second assertion.
Furthermore, $D^{u_2}\in\mathrm{Eff}(\fX_{r}^{u_2})$ and hence we have $\mathrm{Eff}(\fX_{r}^{u_1})=\mathrm{Eff}(\fX_{r}^{u_2})$ since $u_1,u_2$ were arbitrary.
\end{proof}


\begin{Cor}\label{MainCor1}
Let $\fX_{r}^S\to S$ and $U\subset S$ be as in \ref{MainLem2}.
Let $\cE^S=m_1\cE_1^S+\cdots+m_r\cE_r^S$ with $m_i\ge0$.
Suppose that $-\cE^s$ satisfies the proximity inequality for all $s\in S$.
\begin{enumerate}
    \item $\widehat{\alpha}(\cE^u)$ is constant for all $u\in U$.
    \item For any $s\in S$, $\widehat{\alpha}(\cE^{s})\le \widehat{\alpha}(\cE^u)$ for all $u\in U$.
\end{enumerate}
\end{Cor}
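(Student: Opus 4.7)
The plan is to reduce both statements to the effective-cone picture by applying Proposition \ref{WaldConst} fiberwise. Since $-\cE^s$ satisfies the proximity inequalities for every $s \in S$, that proposition yields
\[
\widehat{\alpha}(\cE^s) = \inf\left\{ \frac{d}{m} : H^0\bigl(\fX_{r}^s,\, d\cL^s - m\cE^s\bigr) \ne 0,\ m > 0 \right\}
\]
at every $s \in S$, so the corollary becomes a statement about which classes of the form $d\cL^\bullet - m\cE^\bullet = d\cL^\bullet - mm_1\cE_1^\bullet - \cdots - mm_r\cE_r^\bullet$ are effective on the fibers.

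For (1), I fix any $u_0 \in U$. Lemma \ref{MainLem2}(2), applied to the classes $d\cL^S - mm_1\cE_1^S - \cdots - mm_r\cE_r^S$, shows that for each pair $(d,m)$ the effectivity of $d\cL^u - m\cE^u$ on $\fX_r^u$ does not depend on the choice of $u \in U$. Hence the set of $(d,m)$ appearing in the infimum is the same for every $u \in U$, and $\widehat{\alpha}(\cE^u)$ is constant on $U$.

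For (2), let $s \in S$ be arbitrary and pick any $u \in U$. For every pair $(d,m)$ such that $d\cL^u - m\cE^u$ is effective on $\fX_r^u$, part (1) (or Lemma \ref{MainLem2}(2) directly) guarantees effectivity on every fiber over $U$. Since $S$ is an irreducible algebraic variety and $U \subset S$ is open and non-empty, $U$ is dense in $S$, so Lemma \ref{MainLem1} propagates the effectivity of $d\cL^\bullet - m\cE^\bullet$ from $U$ to all of $S$, in particular to $s$. Therefore every pair $(d,m)$ contributing to the infimum at $u$ also contributes at $s$, yielding $\widehat{\alpha}(\cE^s) \le \widehat{\alpha}(\cE^u)$.

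The substance of the argument already sits in the preceding results: Proposition \ref{WaldConst} reduces the Waldschmidt constant to an infimum over effective classes of a specific shape, Lemma \ref{MainLem2} makes the effective cone (in particular, this slice of it) constant on $U$, and Lemma \ref{MainLem1} transports effective classes from the generic locus to arbitrary special fibers. There is no real obstacle here; the only point requiring care is that $U$ must be dense for Lemma \ref{MainLem1} to apply, which is ensured by the irreducibility built into the notion of algebraic variety.
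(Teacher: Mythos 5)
Your proof is correct and follows essentially the same route as the paper: reduce to effectivity of the classes $d\cL^s-m\cE^s$ (the proximity hypothesis making the cohomological formula for $\widehat{\alpha}$ valid fiberwise), use Lemma \ref{MainLem2}(2) to make effectivity constant on $U$, and use Lemma \ref{MainLem1} to propagate it from the dense open $U$ to every $s\in S$. Your explicit appeals to Proposition \ref{WaldConst} and to the density of $U$ are just slightly more careful spellings of steps the paper leaves implicit.
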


\begin{proof}
If $u_1,u_2\in U$,
$d\cL^{u_1}-m\cE^{u_1}$ is effective if and only if $d\cL^{u_2}-m\cE^{u_2}$ is effective by \ref{MainLem2}.
Therefore, $\widehat{\alpha}(\cE^{u_1})=\widehat{\alpha}(\cE^{u_2})$, which proves the first assertion.

Let $u_0\in U$ be arbitrary and suppose $d\cL^{u_0}-m\cE^{u_0}$ is effective.
By \ref{MainLem2}, $d\cL^u-m\cE^u$ is effective for all $u\in U$.
Then by \ref{MainLem1}, $d\cL^s-m\cE^s$ is effective for all $s\in S$.
So $\widehat{\alpha}(\cE^{s})\le \frac{d}{m}$ and hence $\widehat{\alpha}(\cE^{s})\le \widehat{\alpha}(\cE^{u})$ for all $u\in U$.
\end{proof}

\begin{Note}\label{ClosedSubschemeZ}
Let $\fX_{r}^S\to S$ and $\cE^S = m_1\cE_1^S+\cdots+m_r\cE_r^S$ with $m_i\ge0$.
Suppose $-\cE^s$ satisfies the proximity inequality for all $s\in S$.
The closed subscheme $Z^S$ defined by $(b_{r,0}^{S})_*\O_{\fX_r}(-\cE^S)$ on $\P^2_{S}$ is flat over $S$, and it commutes with base change (\ref{ClosedSubschemeZ-Lemma}).
We denote by $\cE_Z^S=m_1\cE_1^S+\cdots+m_r\cE_r^S$ in this sense.
Then $(b_{r,0}^s)_*\O_{X_{r}^s}(-\cE_Z^s)$ defines the closed subscheme $Z^{s}$ of $\P^2$, which is a fat point subscheme of $\P^2$ as we discussed in Section \ref{S2}.
\end{Note}

We now state and prove our main results.

\begin{Thm}\label{MainThm1}
Let $\fX_{r}^S\to S$ be an $r$-edpf and $\cE^S=m_1\cE_1^S+\cdots+m_r\cE_r^S$ with $m_i\ge0$.
Suppose $-\cE^s$ satisfies the proximity inequality for all $s\in S$.
Define a function $\widehat{\alpha}_{Z^S}: S \rightarrow \mathbb{R}$ by $s\mapsto \widehat{\alpha}(Z^s)$.
Then every point $s\in S$ is a local minimum of the function $\widehat{\alpha}_{Z^S}$.
\end{Thm}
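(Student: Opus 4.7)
The plan is to prove the theorem by induction on $\dim S$, with Corollary \ref{MainCor1} as the inductive engine. The base case $\dim S = 0$ is trivial: $S$ is a finite set of closed points, each of which is isolated and hence its own open neighborhood, so every point is tautologically a local minimum of $\widehat{\alpha}_{Z^S}$.

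For the inductive step, I fix $s \in S$ and invoke \ref{MainCor1} to produce a nonempty open subset $U \subseteq S$ on which $\widehat{\alpha}_{Z^S}$ is constant and which satisfies $\widehat{\alpha}(Z^s) \le \widehat{\alpha}(Z^u)$ for every $u \in U$. If $s$ already belongs to $U$, then $U$ itself is an open neighborhood of $s$ on which $\widehat{\alpha}_{Z^S}$ attains the value $\widehat{\alpha}(Z^s)$, and we are done. Otherwise $s$ lies in the closed subset $C := S \setminus U$, which has $\dim C < \dim S$. I pull the $r$-edpf back along the inclusion $C \hookrightarrow S$ to obtain an $r$-edpf $\fX_r^C \to C$ with induced divisor $\cE^C = m_1\cE_1^C + \cdots + m_r\cE_r^C$; this is a legitimate $r$-edpf because each blowup appearing in Definition \ref{edpf} commutes with arbitrary base change, and the proximity inequalities for $-\cE^c$ at $c \in C$ are unchanged from those at $c \in S$. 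By \ref{ClosedSubschemeZ}, the Waldschmidt function attached to this restricted family coincides with the restriction of $\widehat{\alpha}_{Z^S}$ to $C$, so the inductive hypothesis furnishes an open $V \subseteq S$ containing $s$ with $\widehat{\alpha}(Z^s) \le \widehat{\alpha}(Z^t)$ for every $t \in V \cap C$. For any $t \in V$, either $t \in V \cap C$ (and the inequality holds by induction) or $t \in V \cap U$ (and it holds by \ref{MainCor1}); hence $V$ is an open neighborhood of $s$ witnessing local minimality.

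The main bookkeeping step is the verification that the base change of the $r$-edpf along $C \hookrightarrow S$ really produces a family whose fiberwise Waldschmidt constants agree with the original function $s \mapsto \widehat{\alpha}(Z^s)$, so that the induction hypothesis can be applied to the \emph{same} function rather than to some a priori different one. This compatibility is built into Definition \ref{edpf} and into the base-change stability of $Z^S$ recorded in \ref{ClosedSubschemeZ}, while \ref{RelEffDiv3} ensures the relevant flatness; I do not expect any genuine obstacle here. A purely cosmetic wrinkle is the possible reducibility of $S$: if $s$ lies on several irreducible components, one runs the argument on each component of $S$ through $s$ and intersects the resulting neighborhoods, so no new ideas are required.
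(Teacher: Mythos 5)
Your proposal is correct and follows essentially the same route as the paper: induction on $\dim S$, using the open set $U$ from \ref{MainLem2}/\ref{MainCor1} and applying the inductive hypothesis to the reduced closed complement $S\setminus U$, then splitting a neighborhood $V$ of $s$ into $V\cap U$ and $V\setminus U$. Your extra remarks on base-changing the $r$-edpf to $S\setminus U$ and on possible reducibility only make explicit what the paper leaves implicit.
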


\begin{proof}
We need to show that for any $s\in S$, there exists an open neighborhood $V\subset S$ of $s$ such that $\widehat{\alpha}(Z^{s})\le \widehat{\alpha}(Z^v)$ for all $v\in V$.
We prove it by induction on $n=\dim(S)$.
If $n=0$, $S$ is a point and the assertion is clear.
Suppose $n\ge1$ and that the assertion holds for all $S'$ of $\dim(S')<n$.
Take an open subset $U\subset S$ with the property as in \ref{MainLem2}.
If $s\in U$, take $V=U$. Then by \ref{MainCor1}, we are done.
So suppose $s\notin U$.
Give an reduced induced scheme structure on $S'=S\setminus U$.
Note that $S'$ is a Noetherian algebraic set of $\dim(S')<n$, 
hence there exists an open neighborhood $V'\subset S'$ of $s$ such that the function $\widehat{\alpha}_{Z^S}$ attains its minimum at $s$ on $V'$ 
by the induction hypothesis.
Take an open subset $V\subset S$ such that $V'=V\cap S'(=V\setminus U)$.
Then for $v\in V\setminus U$, $\widehat{\alpha}(Z^s)\le \widehat{\alpha}(Z^v)$ by the choice of $V'$, and for $v\in V\cap U$, $\widehat{\alpha}(Z^s)\le \widehat{\alpha}(Z^v)$ by \ref{MainCor1}.
Therefore, $\widehat{\alpha}_{Z^S}$ attains its minimum at $s$ on $V$.
\end{proof}

\begin{Thm}\label{MainThm2}
Let $\fX_{r}^S\to S$ be an $r$-edpf and $\cE^S=m_1\cE_1^S+\cdots+m_r\cE_r^S$ with $m_i\ge0$.
Suppose $-\cE^s$ satisfies the proximity inequality for all $s\in S$.
The function $\widehat{\alpha}_{Z^S}: S \rightarrow \mathbb{R}$ defined by $s\mapsto \widehat{\alpha}(Z^s)$ is a lower semi-continuous function on $S$.
Furthermore, the image of $\widehat{\alpha}_{Z^S}$ is a finite set.
\end{Thm}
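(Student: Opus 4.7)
The plan is to deduce the theorem in two steps: first observe that lower semi-continuity is essentially a free consequence of Theorem \ref{MainThm1}, and then prove the finiteness of the image by induction on $\dim(S)$ using Lemma \ref{MainLem2}.

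For the lower semi-continuity, I would argue as follows. Recall that a function $f: S\to \mathbb{R}$ is lower semi-continuous if and only if for every $c\in\mathbb{R}$, the set $\{s\in S : f(s)>c\}$ is open in $S$. Fix $c\in\mathbb{R}$ and let $s\in S$ with $\widehat{\alpha}(Z^s)>c$. By Theorem \ref{MainThm1}, $s$ is a local minimum of $\widehat{\alpha}_{Z^S}$, so there exists an open neighborhood $V\subset S$ of $s$ such that $\widehat{\alpha}(Z^v)\ge \widehat{\alpha}(Z^s)>c$ for every $v\in V$. Hence $V$ is contained in $\{s'\in S : \widehat{\alpha}_{Z^S}(s')>c\}$, which is therefore open. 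This gives the lower semi-continuity.

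For the finiteness of the image, I proceed by induction on $n=\dim(S)$. When $n=0$, $S$ consists of finitely many points and the statement is trivial. Assume $n\ge 1$ and the result holds for all algebraic varieties of strictly smaller dimension. Apply Lemma \ref{MainLem2} to $\fX_r^S\to S$ to obtain a non-empty open subset $U\subset S$ on which $\mathrm{Eff}(\fX_r^u)$ is constant. By Corollary \ref{MainCor1}(1), $\widehat{\alpha}_{Z^S}$ takes a single value on $U$. Give $S\setminus U$ its reduced induced closed subscheme structure; since $U$ meets every irreducible component of $S$ that is open in $S$ (or we may apply the argument to each irreducible component separately), $\dim(S\setminus U)<n$. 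Base-changing $\fX_r^S\to S$ along $S\setminus U\hookrightarrow S$ yields an $r$-edpf over $S\setminus U$ whose corresponding divisor still satisfies the proximity inequalities pointwise (this property is fibrewise and is preserved under restriction). By the induction hypothesis, $\widehat{\alpha}_{Z^S}$ takes only finitely many values on $S\setminus U$. Combining with the single value contributed by $U$, the image of $\widehat{\alpha}_{Z^S}$ on all of $S$ is finite.

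The main conceptual point, though not a computational obstacle, is to notice that Theorem \ref{MainThm1} does the real work: the local minimum property is itself already the pointwise form of lower semi-continuity. The remaining content of this theorem, namely the finiteness of the image, relies crucially on the fact that $\mathrm{Eff}(\fX_r^s)$ is finitely generated by elements of the finite set $\mathscr{N}_r$ (Proposition \ref{FiniteGen}), so that only finitely many distinct effective cones can arise as $s$ varies over $S$. The bookkeeping in the induction (handling multiple irreducible components of $S\setminus U$ and verifying that the hypotheses on $\cE^S$ transfer to each stratum) is routine, and requires no new ideas beyond those already in place.
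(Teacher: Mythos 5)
Your proposal is correct and follows essentially the same route as the paper: lower semi-continuity is deduced directly from the local-minimum property of Theorem \ref{MainThm1} by showing each superlevel set is open, and finiteness of the image is proved by induction on $\dim(S)$ using the constant value on the open set $U$ from Lemma \ref{MainLem2} and Corollary \ref{MainCor1}. Your extra care about irreducible components of $S\setminus U$ and the transfer of hypotheses under base change is a harmless refinement of the same argument.
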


\begin{proof}
It is lower semi-continuous if and only if the set $S_M:=\{s\in S : \widehat{\alpha}_Z(s)> M \}$ is open for every $M\in\mathbb{R}$.
Let $s_0\in S_M$.
By \ref{MainThm1}, there exists $V\subset S$ a neighborhood of $s_0$ such that $\widehat{\alpha}(Z^v)\ge\widehat{\alpha}(Z^{s_0})$ for all $v\in V$.
Therefore, $V\subset S_M$. It follows that $S_M$ is open in $S$.

For the second part, we proceed by induction on $n=\dim(S)$.
If $n=0$, the assertion is clear.
Suppose $n\ge1$. By \ref{MainCor1}, there is a non-empty open subset $U\subset S$ such that $\widehat{\alpha}(Z^u)$ is constant for all $u\in U$.
Since $S'=S\setminus U$ is a Noetherian algebraic set of $\dim(S')<n$, it has finitely many irreducible components of $\dim<n$.
By the induction hypothesis, the image of $\widehat{\alpha}_Z$ on $S'$ is a finite set.
Therefore, the whole image of $\widehat{\alpha}_Z$ on $S$ is also a finite set.
\end{proof}

We end this section with the remark introduced in Notation \ref{ClosedSubschemeZ}.

\begin{Rmk}\label{ClosedSubschemeZ-Lemma}
Let $\fX_{r}^S\to S$ be an $r$-edpf.
Let $\cE^S=m_1\cE_1^S+\cdots+m_r\cE_r^S$ with $m_i\ge0$ and suppose that $-\cE^s$ satisfies the proximity inequalities for all $s\in S$.
Consider the direct image sheaf $\mathscr{I}^S:=(b_{r,0}^S)_*\O_{\fX_{r}^S}(-\cE^S)$ where $b_{r,0}^S:\fX_{r}^S\to \fX_{0}^S\cong \P^2_S$ is the composition of the blowing ups.
It is a sheaf of ideals on $\P^2_S$, and hence it defines a closed subscheme $Z^S$.
We claim that $Z^S$ is flat over $S$ and commutes with base change, 
that is, $Z^T\cong T\times_S Z^S$ for any $S$-scheme $T$. 
The problem is local, we may assume $S=\Spec A$ and $T=\Spec B$.

For any integer $d\ge0$, there is an exact sequence of flat $\mathcal{O}_S$-modules (\ref{RelEffDiv3}):
\[
0 \longrightarrow \mathcal{O}_{\fX_{r}^S}\left(d\cL^S-\cE^S\right) \longrightarrow \mathcal{O}_{\fX_{r}^S}\left(d\cL^S\right) \longrightarrow \mathcal{O}_{\cE^S}\left(d\cL^S\right) \longrightarrow 0.
\]
Restricting it to a fiber $\fX_{r}^s$, we have an exact sequence of $\mathcal{O}_{\fX_{r}^s}$-modules:
\[
0 \longrightarrow \mathcal{O}_{\fX_{r}^s}\left(d\cL^s-\cE^s\right) \longrightarrow \mathcal{O}_{\fX_{r}^s}\left(d\cL^s\right) \longrightarrow \mathcal{O}_{\cE^s}\left(d\cL^s\right) \longrightarrow 0.
\]
Note that there exists an integer $d_0\ge0$ such that $d\cL^s-\cE^s$ is nef for all $d\ge d_0$ and $s\in S$.
So if $d\ge d_0$, $H^{i}(\fX_{r}^s,\mathcal{O}_{\fX_{r}^s}\left(d\cL^s-\cE^s\right)) = 0$ for all $i\ge1$ (\cite[Theorem 8]{BH5}).
Since $H^i(\fX_{r}^s,\mathcal{O}_{\fX_{r}^s}\left(d\cL^s\right)) =0$ for all $d\ge0$ and $i\ge1$,
by the proper base change theorem for quasi-coherent sheaves (\cite[Theorem III.12.11]{RH}),
\[
H^{i} \left( \fX_{r}^S, \mathcal{O}_{\fX_r^S}\left(d\cL^S-\cE^S\right) \right) = H^{i} \left( \fX_{r}^S, \mathcal{O}_{\fX_r^S}\left(d\cL^S\right) \right) = 0 
\]
if $d\ge d_0$ and $i\ge 1$.
Furthermore, if $d\ge d_0$, we have the following commutative diagram with exact rows:
\[
\xymatrix{
0\ar[r]	& \Gamma \left( \fX_{r}^S, \mathcal{O}_{\fX_{r}^S}\left(d\cL^S-\cE^S\right) \right) \otimes_A k(s) \ar[r] \ar[d]^{\cong}
		& \Gamma \left( \fX_{r}^S, \mathcal{O}_{\fX_r^S}\left(d\cL^S\right) \right) \otimes_A k(s) \ar[d]^{\cong} \\ 
0\ar[r]	& \Gamma \left( \fX_{r}^s, \mathcal{O}_{\fX_{r}^s}\left(d\cL^s-\cE^s\right) \right) \ar[r]
		& \Gamma \left( \fX_{r}^s, \mathcal{O}_{\fX_r^s}\left(d\cL^s\right) \right) 
}
\]
Then by \cite[Proposition III.12.5, Proposition III.12.10]{RH}, if $d\ge d_0$, there is:
\begin{align}
\xymatrix{
0\ar[r]	& \Gamma \left( \fX_{r}^S, \mathcal{O}_{\fX_{r}^S}\left(d\cL^S-\cE^S\right) \right) \otimes_A B \ar[r] \ar[d]^{\cong}
		& \Gamma \left( \fX_{r}^S, \mathcal{O}_{\fX_r^S}\left(d\cL^S\right) \right) \otimes_A B \ar[d]^{\cong}
\\
0\ar[r]	& \Gamma \left( \fX_{r}^T, \mathcal{O}_{\fX_{r}^T}\left(d\cL^T-\cE^T\right) \right) \ar[r]
		& \Gamma\left( \fX_{r}^T, \mathcal{O}_{\fX_r^T}\left(d\cL^T\right) \right)
}
\end{align}
Note that $(b_{r,0}^S)_*\mathcal{O}_{\fX_{r}^S} (d\cL^S-\cE^S)\cong \mathscr{I}^S\otimes_{\O_{\P^2_S}} \O_{\P^2_S}(d) = \mathscr{I}^S(d)$
and that $(b_{r,0}^S)_*\mathcal{O}_{\fX_{r}^S} (d\cL^S)\cong \O_{\P^2_S}(d)$ by projection formula (They also hold for $T$).
Therefore, we have the following commutative diagram with exact rows for $d\ge d_0$:
\[
\xymatrix{
0\ar[r]	& \Gamma \left( \P^2_S, \mathscr{I}^S(d) \right) \otimes_A B \ar[r] \ar[d]^{\cong}
		& \Gamma \left( \P^2_S, \O_{\P^2_S}(d) \right) \otimes_A B \ar[d]^{\cong}
\\
0\ar[r]	& \Gamma \left( \P^2_T, \mathscr{I}^T(d) \right) \ar[r]
		& \Gamma \left( \P^2_T, \O_{\P^2_T}(d) \right)
}
\]
Finally, it yields:
\begin{align}\label{TensbyB}
\xymatrix{
0\ar[r]	& \mathscr{I}^S \otimes_A B \ar[r] \ar[d]^{\cong}
		& \O_{\P^2_S} \otimes_A B \ar[r] \ar[d]^{\cong}
		& \O_{\P^2_S}/\mathscr{I}^S \otimes_A B \ar[r] \ar[d]^{\cong}
		& 0
\\
0\ar[r]	& \mathscr{I}^T \ar[r]
		& \O_{\P^2_T} \ar[r]
		& \O_{\P^2_T}/\mathscr{I}^T \ar[r]
		& 0
}
\end{align}
Hence $Z^T\cong T\times_S Z^S$.
On the other hand, From the diagram \ref{TensbyB}, $\mathrm{Tor}_1^A(\O_{\P^2_S}/\mathscr{I}^S,B)=0$  for any $A$-algebra $B$ since $\O_{\P^2_S}$ is flat over $A$.
Let $M$ be an $A$-module and let $B=A\oplus M$ be the $A$-algebra given by 
\[
(a_1,m_1)\cdot (a_2,m_2) = (a_1a_2, a_2m_1+a_1m_2)
\]
for $(a_1,m_1),\, (a_2,m_2)\in B$.
From the above discussion,
\[
\mathrm{Tor}_1^A(\O_{\P^2_S}/\mathscr{I}^S,B)=\mathrm{Tor}_1^A(\O_{\P^2_S}/\mathscr{I}^S,A)\oplus \mathrm{Tor}_1^A(\O_{\P^2_S}/\mathscr{I}^S,M) =0.
\]
It follows that $\mathrm{Tor}_1^A(\O_{\P^2_S}/\mathscr{I}^S,M)=0$, and therefore, $Z^S$ is flat over $S$.
\end{Rmk}


\vspace{3em}
\section{The Waldschmidt constants for weak del Pezzo surfaces}\label{S5}

For $0\le r \le 8$, there is an open subvariety $\fW_{r-1}\subset \fX_{r-1}$ parametrizing all the $r$-essentially distinct points at whom blowing ups of $\P^2$ are weak del Pezzo surfaces.
We begin with the following proposition.

\begin{Prop}\label{Do8.1.23}\cite[Proposition 8.1.23]{ID}
Let $Y$ be a weak del Pezzo surface.
\begin{enumerate}
    \item Let $f:Y\to\overline{Y}$ be a blowing down of a $(-1)$-curve. Then $\overline{Y}$ is a weak del Pezzo surface.
    \item Let $f:Y'\to Y$ be the blowing up at a point not lying on any $(-2)$-curve. 
    If $K_Y^2>1$ then $Y'$ is a weak del Pezzo surface.
\end{enumerate}
\end{Prop}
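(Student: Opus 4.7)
The plan is to handle each assertion by direct computation with intersection numbers and the pull-back formula for the canonical divisor, complemented by Hodge index and the classical singularity bound $p_a(C) \ge m(m-1)/2$ for an irreducible curve $C$ having a point of multiplicity $m$.

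For (1), let $E \subset Y$ be the $(-1)$-curve contracted by $f$. The formula $f^* K_{\bar{Y}} = K_Y - E$ gives $K_{\bar{Y}}^2 = (K_Y - E)^2 = K_Y^2 + 1 > 0$, which is the bigness condition. For nefness of $-K_{\bar{Y}}$, take any irreducible $\bar{C} \subset \bar{Y}$ with strict transform $C$ and multiplicity $m = \mathrm{mult}_{f(E)}(\bar{C})$; then $f^*\bar{C} = C + mE$, and from $f^*\bar{C} \cdot E = 0$ we obtain $C \cdot E = m$, so that
\[
-K_{\bar{Y}} \cdot \bar{C} = f^*(-K_{\bar{Y}}) \cdot f^*\bar{C} = (-K_Y + E)(C + mE) = -K_Y \cdot C + m \ge 0.
\]

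For (2), let $E \subset Y'$ be the exceptional divisor of $f$ over the point $p \in Y$. Bigness is immediate: $K_{Y'}^2 = K_Y^2 - 1 > 0$. For nefness I argue by contradiction. Suppose $-K_{Y'} \cdot C' < 0$ for some irreducible $C' \subset Y'$. Since $-K_{Y'} \cdot E = 1$, we have $C' \ne E$, so $C'$ is the strict transform of an irreducible $C \subset Y$ passing through $p$ with multiplicity $m = \mathrm{mult}_p(C) \ge 0$, and the assumption becomes $-K_Y \cdot C < m$. Nefness of $-K_Y$ forces $m \ge 1$. If moreover $-K_Y \cdot C = 0$, then Hodge index (using $K_Y^2 > 0$) combined with adjunction forces $C$ to be a smooth rational $(-2)$-curve, contradicting the hypothesis that $p$ lies on no $(-2)$-curve. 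Hence $-K_Y \cdot C \ge 1$ and $m \ge 2$.

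The remaining case is the heart of the argument. If $C^2 < 0$, adjunction and nefness of $-K_Y$ force $C^2 \in \{-1,-2\}$; the $(-1)$-curve case makes $C$ smooth rational (so $m \le 1$), and the $(-2)$-curve case is excluded by the hypothesis on $p$, a contradiction in both cases. If $C^2 \ge 0$, use the singularity bound $p_a(C) \ge m(m-1)/2$ together with adjunction on $Y$ to obtain $C^2 \ge m(m-1) - 2 + (-K_Y \cdot C)$, and then apply Hodge index:
\[
(-K_Y \cdot C)^2 \ge K_Y^2 \cdot C^2 \ge 2\bigl( m(m-1) - 2 + (-K_Y \cdot C) \bigr),
\]
where the last step uses $K_Y^2 \ge 2$. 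Combined with $-K_Y \cdot C \le m - 1$, this collapses to $m^2 + 2m - 7 \le 0$, i.e.\ $m \le 1$, a contradiction. The main obstacle is precisely this last case, and the delicate point worth flagging is that the hypothesis $K_Y^2 > 1$ plays a double role: it ensures bigness of $-K_{Y'}$, and it is exactly what drives the final numerical contradiction in the Hodge/genus estimate.
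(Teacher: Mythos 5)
Your argument is correct. Note, however, that the paper does not prove this proposition at all: it is quoted verbatim from Dolgachev \cite[Proposition 8.1.23]{ID}, so there is no in-paper proof to compare against, and what you have supplied is a self-contained replacement for that citation. Each step checks out: in (1) the identities $f^*K_{\overline{Y}}=K_Y-E$ and $C\cdot E=m$ give both $K_{\overline{Y}}^2=K_Y^2+1$ and $-K_{\overline{Y}}\cdot\overline{C}=-K_Y\cdot C+m\ge0$; in (2) the reduction to $1\le -K_Y\cdot C\le m-1$ and $m\ge2$ is right (the subcase $-K_Y\cdot C=0$ correctly forces a $(-2)$-curve through $p$ via Hodge index plus parity in adjunction), the negative-self-intersection case is killed by adjunction, and the final estimate $(-K_Y\cdot C)^2\ge K_Y^2C^2\ge 2\bigl(m(m-1)-2+(-K_Y\cdot C)\bigr)$ together with $-K_Y\cdot C\le m-1$ does yield $m^2+2m-7\le0$, hence $m\le1$, a contradiction. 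Your closing remark is the right one to flag: $K_Y^2>1$ is used twice, once for bigness of $-K_{Y'}$ and once as the coefficient $K_Y^2\ge2$ in the Hodge-index step, and the argument genuinely fails for $K_Y^2=1$ (where a nodal or cuspidal anticanonical cubic blown up at eight points provides the standard counterexample), so the hypothesis is sharp. The one lemma you import without proof, $p_a(C)\ge m(m-1)/2$ for an irreducible curve with a point of multiplicity $m$, is standard (it follows from $p_a(C)=g(\widetilde{C})+\sum_q\delta_q$ and $\delta_q\ge m_q(m_q-1)/2$), so I have no objection to using it as a black box.
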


Let $Y$ be a blowing up of $\P^2$ at $r$-essentially distinct points.
A class $\alpha\in\Cl(Y)$ is called a \textit{root} if $K_Y\cdot\alpha=0$ and $\alpha^2=-2$.
It is called a \textit{nodal root} if it is a class of a $(-2)$-curve.
A class $\varepsilon\in\Cl(Y)$ is called \textit{exceptional} if $K_Y\cdot\varepsilon=\varepsilon^2=-1$.
Note that if $0\le r\le 8$, there are only finitely many roots and exceptional classes (cf. \cite[Proposition 8.2.7, Proposition 8.2.19]{ID}).
We expand these notions for $r$-edpfs $\fX_{r}^S\to S$.

\begin{Note}
Let $\fX_{r}^S\to S$ be an $r$-edpf and $\mathscr{F}^S$ an invertible sheaf on $\fX_{r}^S$.
$\mathscr{F}^S$ is called a \textit{root} if $(K_{\fX_{r}^s}\cdot\mathscr{F}^s)=0$ and $(\mathscr{F}^s)^2=-2$ for all $s\in S$.
It is called \textit{exceptional} if $(K_{\fX_{r}^s}\cdot\mathscr{F}^s)=(\mathscr{F}^s)^2=-1$ for all $s\in S$.
\end{Note}

Note that since an $r$-edpf is a flat family, the intersection product is invariant on the fibers. 
Hence the condition on the intersection product holds for all points of $S$ if and only if it holds for some points of $S$.

\begin{Thm}\label{UnivFamilyW}
There exists such $i:\fW_{r-1}\hookrightarrow \fX_{r-1}$ for $0\le r \le 8$.
Also, $(\fW_{r-1}, \alpha_{r-1,\bullet}\circ i)$ is the universal object of those $r$-edpf giving a family of weak del Pezzo surfaces.
\end{Thm}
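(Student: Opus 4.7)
The plan is to carve out $\fW_{r-1}$ inside $\fX_{r-1}$ as the complement of finitely many closed loci governed by the finite set $\mathscr{N}_r$ of the preceding section, and then deduce the universal property directly from Proposition \ref{UnivEdpf}.

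Concretely, let $\mathscr{N}_r^{\mathrm{bad}} := \{\alpha \in \mathscr{N}_r : -\mathbf{k}\cdot \alpha < 0\}$, a finite subset. For each $\alpha \in \mathscr{N}_r^{\mathrm{bad}}$, let $\mathscr{F}_\alpha$ be the invertible sheaf on $\fX_r$ whose fiberwise class is $\alpha$ under the marking of Remark \ref{Marking}. By upper semi-continuity of $h^0$, the set $B_\alpha := \{s \in \fX_{r-1} : h^0(\fX_r^s, \mathscr{F}_\alpha^s) > 0\}$ is closed in $\fX_{r-1}$, so
\[
\fW_{r-1} \; := \; \fX_{r-1} \,\setminus \bigcup_{\alpha \in \mathscr{N}_r^{\mathrm{bad}}} B_\alpha
\]
is open in $\fX_{r-1}$. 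I would then verify that $s \in \fW_{r-1}$ if and only if $\fX_r^s$ is a weak del Pezzo surface. One direction is immediate: if some $\alpha \in \mathscr{N}_r^{\mathrm{bad}}$ is effective on $\fX_r^s$, then some irreducible component of a representative divisor has negative intersection with $-K$, so $-K$ is not nef. Conversely, suppose $-K$ fails to be nef and let $C$ be an irreducible curve with $-K\cdot C < 0$. Since $r \le 8$, we have $h^0(\fX_r^s, -K) \ge 10 - r \ge 2$, so we may pick $D \in |-K|$; then $D \cdot C < 0$ forces $C$ to be a component of $D$, and decomposing $D = \sum a_i C_i$ with $C = C_j$ and using $C_i\cdot C_j \ge 0$ for distinct irreducible $C_i, C_j$ gives $C^2 \le (D\cdot C_j)/a_j < 0$. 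Hence $[C] \in \mathrm{NEG}(\fX_r^s) \subset \mathscr{N}_r$ by the proposition of \S\ref{S4} (whose proof does \emph{not} require $\fX_r^s$ to be weak del Pezzo), and $[C] \in \mathscr{N}_r^{\mathrm{bad}}$. Nonemptiness of $\fW_{r-1}$ follows since $r \le 8$ points of $\P^2$ in general position produce a smooth del Pezzo surface.

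For the universal property, let $(T, \tau_\bullet)$ be any $r$-edpf whose fibers $\fX_r^t$ are all weak del Pezzo. By Proposition \ref{UnivEdpf} there is a unique $r$-edpf-morphism $f : (T, \tau_\bullet) \to (\fX_{r-1}, \alpha_{r-1, \bullet})$. The base-change isomorphism $\fX_r^t \cong \fX_r^{f(t)}$ forces $f(t) \in \fW_{r-1}$ for every closed point $t \in T$. Since $T$ is of finite type over $k$, hence Jacobson, and $f^{-1}(\fW_{r-1})$ is open in $T$ and contains every closed point, $f^{-1}(\fW_{r-1}) = T$, so $f$ factors uniquely through the open immersion $i : \fW_{r-1} \hookrightarrow \fX_{r-1}$. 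The main technical point, I expect, is establishing $C^2 < 0$ for the bad irreducible $C$ so that the finiteness $\mathrm{NEG} \subset \mathscr{N}_r$ applies; once that is in hand the remainder is a formal combination of semi-continuity with the already-established universal property.
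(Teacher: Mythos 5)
Your proposal is correct, but it takes a genuinely different route from the paper. The paper constructs $\fW_{r-1}$ \emph{recursively}: it sets $\fW_{r-1}=\fX_{r-1}$ for $r\le 2$, and for $3\le r\le 8$ defines $\fW_{r-1}=\pi_{r-1}^{-1}(\fW_{r-2})\setminus\bigcup_{\mathscr{F}}B_{\widehat{\mathscr{F}}}$, where $\mathscr{F}$ runs over the \emph{roots} on $\fX_{r-1}$ and $\widehat{\mathscr{F}}=\xi_r^*\mathscr{F}-\cE_r$; the point $x$ lies in $B_{\widehat{\mathscr{F}}}$ exactly when $x$ sits on a $(-2)$-curve of $\fX_{r-1}^{\pi_{r-1}(x)}$, so the characterization of the fibers rests on Dolgachev's blow-up criterion (Proposition \ref{Do8.1.23}): a blow-up of a weak del Pezzo surface of degree $>1$ at a point off the $(-2)$-curves is again weak del Pezzo. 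You instead carve out $\fW_{r-1}$ in one step on the top surface $\fX_r^s$, as the complement of the effectivity loci of the finitely many classes in $\mathscr{N}_r$ of negative anticanonical degree, and you justify the fiberwise characterization intrinsically: $-K$ fails to be nef iff some irreducible $C$ with $-K\cdot C<0$ exists, and your argument that such a $C$ must satisfy $C^2<0$ (via $C$ being forced into the support of some $D\in|-K|$, nonempty since $h^0(-K)\ge 10-r$) correctly reduces to the unconditional inclusion $\mathrm{NEG}(\fX_r^s)\subset\mathscr{N}_r$. Both constructions yield the same open set; the paper's buys a more geometric, inductive description (only roots need to be tracked at each stage, and the locus is literally "the new point avoids the $(-2)$-curves"), which is also the form used later in the proof of Theorem \ref{Deg4}, while yours is self-contained given the $\mathrm{NEG}$ classification and avoids the recursion. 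You also spell out the universal property (factoring the unique morphism from Proposition \ref{UnivEdpf} through the open immersion via the Jacobson property), which the paper leaves as "follows easily"; the only caveat is that your closed-point argument implicitly assumes the base $T$ is of finite type over $k$, the same standing hypothesis the paper adopts in \S\ref{S4}.
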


\begin{proof}
$\P^2$ and a blowing up of $\P^2$ at a point have no $(-2)$-curve, 
so $\fW_{r-1}=\fX_{r-1}$ for $r=0,1,2$ by \ref{Do8.1.23}.
For $3\le r\le 8$, let $\mathscr{F}$ be a root on $\fX_{r-1}$ and let $\widehat{\mathscr{F}}=\xi_{r}^*\mathscr{F}-\cE_r$. 
Define
\[
B_{\widehat{\mathscr{F}}} = \{ y_1 \in \fX_{r-1} \,|\, \dim_{k} H^0(\fX_{r}^{y_1}, {\widehat{\mathscr{F}}}^{y_1})\ne 0 \}.
\]
By the semi-continuity theorem, $B_{\widehat{\mathscr{F}}}$ is a closed subset of $\fX_{r-1}$.
Note that if $y_1\in \fX_{r-1}$ corresponds to $r$-distinct points on $\P^2$ in general position, 
then $\dim_k H^0(\fX_{r}^{y_1}, \widehat{\mathscr{F}}^{y_1})= 0$. 
Therefore, $\fX_{r-1}\setminus B_{\widehat{\mathscr{F}}}$ is a non-empty open set, and so is
\[
\fW_{r-1} := \pi_{r-1}^{-1}(\fW_{r-2}) - \bigcup_{\mathscr{F}} B_{\widehat{\mathscr{F}}}
\]
where $\mathscr{F}$ runs over the roots on $\fX_{r-1}$.

Suppose that a fiber $\fX_{r}^x$ over $x\in \fX_{r-1}$ is a weak del Pezzo surface.
Note that $\fX_{r}^x \cong \Bl(\fX_{r-1}^y,x)$ where $y=\pi_{r-1}(x)$.
Hence by \ref{Do8.1.23}, $x\in \pi_{r-1}^{-1}(\fW_{r-2})$.
If $x\in B_{\widehat{\mathscr{F}}}$ for some $\mathscr{F}$, 
there is an effective divisor $C\subset \fX_{r-1}^y$ of class $\mathscr{F}^y$ containing $x$.
Note that every irreducible component of $C$ is a $(-2)$-curve, and thus $x$ lies on a $(-2)$-curve of $\fX_{r-1}^y$.

Conversely, suppose $x\in \fW_{r-1}$.
If $x$ lies on a $(-2)$-curve $C$ on $\fX_{r-1}^y$ where $y=\pi_{r-1}(x)$, then
$x\in B_{\widehat{\mathscr{F}}}$ where $\mathscr{F}$ is the invertible sheaf corresponding to the class of $C$, a contradiction.
Therefore, $x$ does not lie on any $(-2)$-curve on $\fX_{r-1}^y$ and the blowing up surface $\fX_{r}^x$ is a weak del Pezzo surface.
It follows that $\fW_{r-1}$ is the desired one.

The universal property of $(\fW_{r-1},\alpha_{r-1,\bullet}\circ i)$ follows easily from that of $(\fX_{r-1},\alpha_{r-1,\bullet})$.
\end{proof}

Let $X$ be a blowing up of $\P^2$ at $r$-essentially distinct points $p_1,\cdots,p_r$.
Suppose $X$ is a weak del Pezzo surface.
A good point is that $-E_Z = -E_1 - \cdots -E_r$ satisfies the proximity inequalities.
Hence it is natural to consider the fat point subscheme $Z=p_1+\cdots+p_r$ of $\P^2$.

\begin{Prop}\label{Bounds}
Let $X$ be a blowing up of $\P^2$ at $r$-essentially distinct points $p_1,\cdots,p_r$, which is a weak del Pezzo surface of degree $9-r$, and $Z=p_1+\cdots+p_r$ a fat point subscheme of $\P^2$.
Let $\gamma$ be the value of $\widehat{\alpha}(Z)$ when $p_1,\cdots,p_r$ are in general position.
Then $\frac{r}{3} \le \widehat{\alpha}(Z) \le \gamma$ and there are only finitely many possible values for $\widehat{\alpha}(Z)$.
\end{Prop}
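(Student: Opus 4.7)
The plan is to prove the three claims separately. The upper bound and finiteness will come from Theorem \ref{MainThm2} applied to the tautological family over the weak del Pezzo locus $\fW_{r-1}$ of Theorem \ref{UnivFamilyW}; the lower bound $\widehat{\alpha}(Z)\ge r/3$ is a one-line intersection calculation against the nef anticanonical class of $X$.

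For the lower bound I would use Proposition \ref{WaldConst}: since $-E_Z=-E_1-\cdots-E_r$ satisfies the proximity inequalities for any weak del Pezzo configuration (as remarked just before the statement), it suffices to bound $d/m$ whenever $H^0(dL-mE_Z)\ne 0$. In that case an effective divisor of class $dL-mE_Z$ exists, and intersecting it with the nef class $-K_X=3L-E_Z$ gives
\[
0\le (dL-mE_Z)\cdot(3L-E_Z)=3d-mr,
\]
using the pairing $L^2=1$, $L\cdot E_i=0$, $E_i\cdot E_j=-\delta_{ij}$ from Remark \ref{Marking}. Thus $d/m\ge r/3$, so $\widehat{\alpha}(Z)\ge r/3$.

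For the remaining claims I would apply Theorem \ref{MainThm2} to the $r$-edpf $\fX_{r}^{\fW_{r-1}}\to \fW_{r-1}$ with relative divisor $\cE_Z=\cE_1+\cdots+\cE_r$. Its proximity hypothesis holds fiberwise by the same argument used above: every irreducible component $C$ of any $\cE_i^s$ satisfies $C\cdot \cL^s=0$, so $C\cdot(-\cE_Z^s)=C\cdot(-K_{\fX_{r}^s})\ge 0$ by nefness of the relative anticanonical class on the weak del Pezzo family. Theorem \ref{MainThm2} then supplies lower semi-continuity of $\widehat{\alpha}_{Z^{\fW_{r-1}}}$ together with finiteness of its image, which gives the finiteness assertion of the proposition.

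To conclude $\widehat{\alpha}(Z)\le \gamma$, I would identify the generic value on $\fW_{r-1}$ with $\gamma$. The space $\fX_{r-1}$ is irreducible (inductively each $\fX_i\to \fX_{i-1}$ is a blow-up along an irreducible smooth center via Proposition \ref{Blowingup}), so its non-empty open subscheme $\fW_{r-1}$ is irreducible too. By Lemma \ref{MainLem2} applied to $\fX_{r-1}$ there is a non-empty open $U\subset \fX_{r-1}$ on which $\widehat{\alpha}_{Z^S}$ is constant with value $\gamma$, and $U\cap \fW_{r-1}$ is then a dense open in $\fW_{r-1}$. Lower semi-continuity forces $\{s:\widehat{\alpha}(Z^s)>\gamma\}$ to be an open set of $\fW_{r-1}$ disjoint from $U\cap \fW_{r-1}$, hence empty, completing the proof. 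The step I expect to require the most care is the fiberwise verification of the proximity hypothesis over the whole weak del Pezzo family, which is precisely why the nefness of $-K$ plays a central role in both halves of the argument.
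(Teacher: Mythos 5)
Your proposal is correct and follows essentially the same route as the paper: the lower bound comes from intersecting an effective class $dL-mE_Z$ with the nef class $-K_X$ (the paper's ``$K_X$ is nef'' is a typo for $-K_X$), and the upper bound and finiteness come from the semicontinuity machinery applied to the universal family over $\fW_{r-1}$. The only differences are cosmetic: you derive the upper bound from Theorem \ref{MainThm2} together with irreducibility of $\fW_{r-1}$ where the paper cites Corollary \ref{MainCor1}(2) directly, and you spell out the fiberwise proximity-inequality verification ($C\cdot L=0$ for components of the $E_i$, so $-E_Z\cdot C=-K_X\cdot C\ge0$) that the paper merely asserts just before the statement.
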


\begin{proof}
The upper bound of $\widehat{\alpha}(Z)$ is obtained by \ref{MainCor1} applied to $S=\fW_{r-1}$.
Since $K_X$ is nef, for any effective divisor $dL-mE_Z$, we have $(dL-mE_Z) \cdot K_X = 3d-mr\ge 0$, that is, $\frac{d}{m} \ge \frac{r}{3}$.
Hence, we obtain the lower bound for $\widehat{\alpha}(Z)$.
Finally, \ref{MainThm2} deduces the last part.
\end{proof}

Using the lower semi-continuity of the Waldschmidt constants, we can easily calculate the Waldschmidt constant of $Z=p_1+\cdots+p_r$ for a weak del Pezzo surface $X$.
For example, we calculate the Waldschmidt constant for $r=5$.

\begin{Thm}\label{Deg4}
Let $X$ be a blowing up of $\P^2$ at $5$-essentially distinct points $p_1,\cdots,p_5$, which is a weak del Pezzo surface of degree 4.
Let $Z=p_1+\cdots+p_5$ be a fat point subschem of $\P^2$.
Then $\widehat{\alpha}(Z)=2$, $\frac{9}{5}$, $\frac{7}{4}$ or $\frac{5}{3}$.
\end{Thm}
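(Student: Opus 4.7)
The plan is to combine the finiteness statement of \ref{MainThm2} with the explicit description of the effective cone in \ref{FiniteGen}. I work on the universal family $\fX_5^{\fW_4}\to\fW_4$ of weak del Pezzo surfaces of degree $4$ with the relative divisor $\cE_Z^{\fW_4}=\cE_1^{\fW_4}+\cdots+\cE_5^{\fW_4}$ (which satisfies the proximity inequalities on every fibre because $-K$ is nef). Then \ref{Bounds} immediately yields $\tfrac{5}{3}\le \widehat{\alpha}(Z)\le 2$, the upper bound being realized by the conic through $5$ general points, and \ref{MainThm2} tells me that $s\mapsto\widehat{\alpha}(Z^s)$ takes only finitely many values on $\fW_4$. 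So it suffices to identify exactly which rationals in $[\tfrac{5}{3},2]$ can arise.

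Next I exploit the arithmetic constraint coming from $-K_X$. By \ref{FiniteGen} (which applies because $r=5\le 7$), every effective class $dL-mE_Z$ decomposes as $\sum a_iC_i+\sum b_jD_j$, where the $C_i$ are prime $(-1)$-curves (so $-K_X\cdot C_i=1$) and the $D_j$ are prime $(-2)$-curves (so $-K_X\cdot D_j=0$). Intersecting with the nef class $-K_X=3L-E_Z$ yields
\[
3d-5m=\sum a_i\in\mathbb{Z}_{\ge 0},
\]
so every attained Waldschmidt constant is of the form $\tfrac{5}{3}+\tfrac{k}{3m}$ for some non-negative integer $k=3d-5m$, and $\widehat{\alpha}(Z)\le 2$ forces $k\le m$.

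I then stratify $\fW_4$ by the sub-root system $R\subset D_5$ spanned by the classes of the $(-2)$-curves on $X$, equivalently by the Dynkin type of the singularities of the anticanonical model. For each of the finitely many strata, I enumerate the prime negative curves using the classification of classes in $\mathscr{N}_5$ from the previous section, and minimise $d/m$ over integer combinations. I expect the four values to arise as follows: the empty stratum gives $\widehat{\alpha}(Z)=2$ via the conic; strata with enough $(-2)$-curves that $5mL-3mE_Z$ lies in the monoid they generate give $k=0$ and hence $\widehat{\alpha}(Z)=5/3$; and the two intermediate values $9/5$ and $7/4$ correspond to the minimal effective expressions with $(m,d)=(5,9)$, $k=2$, and $(m,d)=(4,7)$, $k=1$, respectively. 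Rival candidates like $12/7$ or $11/6$ are ruled out by showing that in any stratum where such a class becomes effective, a non-negative root combination already produces a smaller $d/m$.

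The main obstacle is the combinatorial bookkeeping inside the stratification. One must check every sub-root system of $D_5$ that actually occurs as a $(-2)$-configuration on a weak del Pezzo of degree $4$, and for each such stratum enumerate the prime $(-1)$- and $(-2)$-curves (which depends on the geometric realization of the points, not just on the root lattice) in order to solve the integer programme that determines $\widehat{\alpha}(Z)$. The identity $\sum a_i=3d-5m$ together with the bound $k=\sum a_i\le m$ is the crucial simplification, because it forces attention to very few pairs $(m,k)$ once one also demands that the residual class $(5m+k)L-3mE_Z-\sum a_iC_i$ admit a non-negative root decomposition in the stratum under consideration.
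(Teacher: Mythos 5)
Your framework is sound and genuinely different from the paper's: the observation that \ref{FiniteGen} forces every effective $dL-mE_Z$ on a weak del Pezzo of degree $4$ to decompose into prime $(-1)$- and $(-2)$-curves, so that pairing with $-K_X$ gives $3d-5m=\sum a_i\ge 0$ and hence $\widehat{\alpha}(Z)=\tfrac{5}{3}+\tfrac{k}{3m}$ with $0\le k\le m$ at the minimizer, is correct and is a real simplification the paper does not use. However, as written the proof has a genuine gap: the theorem's entire content is the claim that \emph{only} the four values $2,\tfrac{9}{5},\tfrac{7}{4},\tfrac{5}{3}$ occur, and your argument reduces this to a finite but substantial case analysis that you then only announce (``I expect the four values to arise as follows\dots'', ``rival candidates like $12/7$ or $11/6$ are ruled out by showing\dots'') without executing. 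The arithmetic constraint alone still admits infinitely many candidates ($\tfrac{11}{6}$, $\tfrac{16}{9}$, $\tfrac{12}{7}$, \dots), so nothing is proved until the per-stratum minimization is actually done. A second, related issue: stratifying by the sub-root system $R\subset D_5$ (equivalently the singularity type $\sigma$) is not fine enough. The monoid $\mathrm{Eff}(X)$, and hence $\widehat{\alpha}(Z)$, is determined by the set of classes that are actually prime negative curves, and this ``configuration type'' is finer than $\sigma$ — the paper's classification needs the triple $(n,\sigma,l)$ and even then some triples admit two distinct blowing-up models. You acknowledge this parenthetically, but your stratification as stated would conflate cases that must be treated separately (e.g.\ the two models of type $(2,A_4,3)$, one of which gives $\tfrac{7}{4}$ and the other $2$).

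For comparison, the paper completes the analysis by running through all $\approx 30$ configuration types and, for each, either exhibiting an explicit effective $D=dL-mE_Z$ together with a nef $F$ with $D\cdot F=0$ (Lemma \ref{ZariskiDecomp}, which certifies that $d/m$ is exactly the infimum), or constructing a ``simple $5$-edpf'' degenerating the configuration to an already-handled one with value $2$ and invoking the semicontinuity squeeze $\widehat{\alpha}(Z^{t_0})\le\widehat{\alpha}(Z^{t_1})\le 2$. Your cone-theoretic route would replace both devices by a single integer program per configuration type (minimizing $d/m$ over the explicitly generated monoid), which is arguably cleaner and more systematic; but to be a proof it needs the full list of configuration types together with their sets of prime negative curves (the data of the paper's Appendix A), and the minimization must actually be carried out for each.
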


To calculate the Waldschmidt constant, we recall the following (cf. \cite[Proposition 1.4.8]{BH1}).

\begin{Lem}\label{ZariskiDecomp}
Let $X$ be a blowing up of $\P^2$ at $r$-essentially distinct point.
Let $Z=m_1p_1+\cdots+m_rp_r$ satisfying the proximity inequality.
Suppose that $D=dL-mE_Z$ is effective on $X$, and that $D\cdot F=0$ for a nonzero nef divisor $F$ on $X$.
Then $\widehat{\alpha}(Z)=\frac{d}{m}$.
\end{Lem}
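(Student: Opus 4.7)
The plan is to establish both inequalities $\widehat{\alpha}(Z) \le d/m$ and $\widehat{\alpha}(Z) \ge d/m$ separately. The upper bound is immediate from Proposition \ref{WaldConst}: since $Z$ satisfies the proximity inequalities, $\widehat{\alpha}(Z) = \inf\{d'/m' : H^0(d'L - m'E_Z)\ne 0,\, m'>0\}$, and by hypothesis $D = dL - mE_Z$ is effective, so $H^0(D) \ne 0$ and $\widehat{\alpha}(Z) \le d/m$.

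For the lower bound, I would fix an arbitrary effective divisor $D' = d'L - m'E_Z$ with $m' > 0$ and show $d'/m' \ge d/m$. Since $F$ is nef and $D'$ is represented by an effective divisor, we have $D' \cdot F \ge 0$. Expanding, this reads
\[
d'(L\cdot F) - m'(E_Z\cdot F) \ge 0.
\]
Similarly, the assumption $D \cdot F = 0$ gives $d(L\cdot F) = m(E_Z\cdot F)$. If I can show $L \cdot F > 0$, then this last identity yields $E_Z\cdot F = (d/m)(L\cdot F)$, and substituting into the inequality gives $(L\cdot F)(d' - (d/m)m') \ge 0$, hence $d'/m' \ge d/m$ as required.

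The crux is therefore proving $L \cdot F > 0$ for any nonzero nef $F$ on $X$. Here I would use two standard facts. First, $L$ is nef with $L^2 = 1 > 0$: nef because $L = b^*\O_{\P^2}(1)$ is a pull-back, and $L^2 = 1$ by the projection formula. Second, any nef divisor $G$ on a smooth projective surface satisfies $G^2 \ge 0$: for an ample $H$, the class $G + tH$ is ample for $t > 0$, so $(G+tH)^2 > 0$, and letting $t \to 0^+$ gives $G^2 \ge 0$. Applying both to $F$: if one had $L \cdot F = 0$, the Hodge Index theorem (with $L^2 > 0$) would force $F^2 \le 0$, so combined with $F^2 \ge 0$ we would get $F^2 = 0$ and $F \equiv 0$ numerically, contradicting the nonzero hypothesis on $F$. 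Hence $L \cdot F > 0$, completing the argument.

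The main obstacle is really just verifying $L \cdot F > 0$; once this is in hand, the rest is elementary intersection-theoretic manipulation. I would flag that "nonzero" must be read in the numerical sense (i.e.\ $F \not\equiv 0$), since otherwise $D \cdot F = 0$ would be vacuous and impose no constraint; this is the natural interpretation in the setting where the lemma is applied (the divisor $F$ typically comes from an actual curve component of $D$, which is clearly not numerically trivial).
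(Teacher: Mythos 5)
Your proposal is correct and follows essentially the same route as the paper: upper bound from effectivity of $D$ via Proposition \ref{WaldConst}, lower bound by pairing an arbitrary effective class $d'L-m'E_Z$ with the nef divisor $F$. The only difference is that you explicitly justify $L\cdot F>0$ (via $L^2=1$ and the Hodge index theorem), a step the paper's division by $(L\cdot F)$ leaves implicit; this is a worthwhile detail, and your caveat about numerical triviality is automatically satisfied here since the intersection form on $\Cl(X)\cong I^{1,r}$ is nondegenerate.
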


\begin{proof}
By definition, $\widehat{\alpha}(Z)\le \frac{d}{m}$.
For any effective divisor $d'L-m'E_Z$, we have $(d'L-m'E_Z)\cdot F \ge 0$, and thus
\[
\frac{d'}{m'} \ge \frac{(E_Z\cdot F)}{(L\cdot F)} = \frac{d}{m}.
\]
Therefore, $\widehat{\alpha}(Z)\ge \frac{d}{m}$.
\end{proof}

Let $r\ge3$ and consider the lattice $I^{1,r}$ with $\mathbf{k}_r=-3\mathbf{e}_0+\mathbf{e}_1+\cdots+\mathbf{e}_r$.
The sublattice $\mathbb{E}_r=(\mathbb{Z}\mathbf{k}_r)^{\perp}$ of $I^{1,r}$ is called the \textit{$\mathbb{E}_r$-lattice}.
Let $\mathrm{O}(I^{1,r})$ be the orthogonal group of $I^{1,r}$, and $W(\mathbb{E}_r)=\mathrm{O}(I^{1,r})_{\mathbf{k}_r}$ its stabilizer subgroup of $\mathbf{k}_r$, called the \textit{Weyl group} of $\mathbb{E}_r$.
Let 
\[
\alpha_1=\mathbf{e}_0-\mathbf{e}_1-\mathbf{e}_2-\mathbf{e}_3 ~\textrm{and} ~\alpha_i=\mathbf{e}_{i-1}-\mathbf{e}_i, ~ i=2,\cdots,r.
\]
Here, $\alpha_i$ are roots in $\mathbb{E}_r$.
Note that the reflections defined by
\[
r_{\alpha_i} : v \mapsto v+(v\cdot\alpha_i)\alpha_i, ~ i=1,\cdots,r
\]
generates $W(\mathbb{E}_r)$ (cf. \cite[Definition 7.5.8, Corollary 8.2.15]{ID}).
Also, the subgroup $\mathfrak{S}_r\subset W(\mathbb{E}_r)$ generated by $r_{\alpha_i}$, $i=2,\cdots,r$ acts as the permutation group of the vectors $\mathbf{e}_1,\cdots,\mathbf{e}_r$.

Let $X$ be a blowing up of $\P^2$ at $r$-essentially distinct points, which is a weak del Pezzo surface of degree $9-r$.
There is a canonical isomorphism $\phi:I^{1,r} \to \Cl(X)$ as in \ref{Marking}.
Let $L, E_1, \cdots, E_r$ be the divisor classes corresponding to the standard basis of $I^{1,r}$.
Any such isomorphism is called a \textit{marking} on $X$.
It is called a \textit{geometric marking} on $X$ if the marking is canonically obtained by a blowing down structure on $X$.
Similarly to $\mathbb{E}_r$, define the Weyl group of $X$ by $W(X)=\mathrm{O}(\Cl(X))_{K_X}$.
The geometric marking $\phi$ induces an isomorphism $W(\mathbb{E}_r) \to W(X)$. 
Let $\psi:I^{1,r}\to\Cl(X)$ be another geometric marking on $X$ and $L',E_1',\cdots,E_r'$ be the corresponding basis on $\Cl(X)$.
Since $W(X)$ acts simply transitively on the set of markings on $X$ (cf. \cite[Theorem 8.2.12, Corollary 8.2.15]{ID}), there is an $\omega\in W(X)$ such that $\psi=\omega\circ\phi$.
Note that $\omega$ fixes the class $L=\phi(\mathbf{e}_0)$ if and only if $\omega\in\mathfrak{S}_r$.
Therefore, in this case, $\omega$ fixes the class $dL-mE_Z$.
It follows that if $Z=p_1+\cdots+p_r$ and $Z'=p_1'+\cdots+p_r'$ are fat point subschemes of $\P^2$ under the geometric markings $\phi$ and $\psi$, respectively, we have $\widehat{\alpha}(Z)=\widehat{\alpha}(Z')$.
So, $\widehat{\alpha}(Z)$ does not depend on the blowing up order of $p_1,\cdots,p_r$.


Now, back to the situation of \ref{Deg4}, we obtain the following list of types of singularities of a singular del Pezzo surfaces of degree $4$ by analyzing root bases in $\mathbb{E}_5$:
\[
\begin{array}{ll}
    (\nu=5) & D_5, \, 2A_1A_3 \\
    (\nu=4) & D_4, \, A_4, \, A_1A_3, \, 2A_1A_2, \, 4A_1 \\
    (\nu=3) & A_3, \, A_1A_2, \, 3A_1 \\
    (\nu=2) & A_2, \, 2A_1 \\
    (\nu=1) & A_1
\end{array}
\]
where $\nu$ denotes the order of each root basis $\{ \beta_1,\cdots,\beta_\nu \}$ in $\mathbb{E}_5$ (cf. \cite[Section 8.6.3]{ID}).
Let $n$ be the number of points on $\P^2$ among the $5$-essentially distinct points $p_1,\cdots,p_5$,
$\sigma$ the type of the singularities of the anti-canonical model of $X$, 
and $l$ the number of the lines on $X$.
Up to the action of the Weyl group $W(\mathbb{E}_5)$, 
we can completely classify the blowing up models of weak del Pezzo surfaces of degree $4$ by the triple $(n,\sigma,l)$ (cf. \cite[Table 8.6]{ID} or \cite[Proposition 6.1]{CT}).
All the types are listed below by $(n,\sigma,l)(k)$, where $k$ denotes the number of distinct blowing up models if exist:
\[
\begin{array}{ll}
    (n=1) & (1,D_5,1), \, (1,A_4,3); \\
    (n=2) & (2,2A_1A_3,2), \, (2,D_4,2), \, (2,A_4,3)(2), \, (2,A_1A_3,3), \, (2,2A_1A_2,4), \, (2,A_3,5), \\
	    &(2,A_1A_2,6); \\
    (n=3) & (3,A_1A_3,3), \, (3,2A_1A_2,4), \, (3,4A_1,4), \, (3,A_3,4), \, (3,A_3,5)(2), \, (3,A_1A_2,6)(2), \\
	    & (3,3A_1,6), \, (3,A_2,8), \, (3,2A_1,9); \\
    (n=4) & (4,A_1A_2,6), \, (4,3A_1,6), \, (4,A_2,8), \, (4,2A_1,8), \, (4,2A_1,9), \, (4,A_1,12); \\
    (n=5) & (5,2A_1,9), \, (5,A_1,12), \, (5,\emptyset,16).
\end{array}
\]
The corresponding configurations of irreducible roots and lines are listed in Appendix A of the arXiv version of this paper: \url{https://arxiv.org/abs/1802.09755}.

The basic strategy to calculate $\widehat{\alpha}(Z)$ is as follows.
For general cases, $\widehat{\alpha}(Z)=2$.
In fact, the divisor class $D=2L-E_Z$ is effective.
Also the only negative curves are $E_1,\cdots,E_5$, and hence $F=5L-2E_Z$ is nef.
Since $D\cdot F=0$, we conclude that $\widehat{\alpha}(Z)=2$ by \ref{ZariskiDecomp}.
Now suppose that $p_1,\cdots,p_5$ are not in general position.
By \ref{Bounds}, $\widehat{\alpha}(Z)\le2$.
We define the following.
\begin{Def}
Let $X$ be a blowing up of $\P^2$ at $r$-essentially distinct points.
An $r$-edpf $\fX_{r}^T\to T$ over a non-singular curve $T$ is called a \textit{simple $r$-edpf} for $X$ if
\begin{enumerate}
    \item $\fX_{r}^{t_1}\cong X$ as marked surfaces for some $t_1\in U$, where $U\subset T$ is the open subset as in \ref{MainLem2}.
    \item $\pi_{r-1}^T: \fX_{r-1}^T \to T$ is isomorphic to the trivial family $T\times Y\to T$, where $Y$ is the blowing down of $X$ contracting $E_{r}$.
\end{enumerate}
\end{Def}
If there is a simple $5$-edpf $\fX_{6}^T\to T$ for $X$, for any $t_0\notin U$,
\[
\widehat{\alpha}\left( Z^{t_0} \right) \le \widehat{\alpha}\left( Z^{t_1} \right)=\widehat{\alpha}(Z) \le 2.
\]
If $\widehat{\alpha}(Z^{t_0})=2$, we also have $\widehat{\alpha}(Z^{t_1})=2$.
If there is no such $5$-edpf, we directly calculate $\widehat{\alpha}(Z)$ by finding $D$ and $F$ in \ref{ZariskiDecomp}.

\begin{Note}
Denote by $E_{ij}$ the effective divisor of the class $E_i-E_j$, by $L_{ij}$ that of the class $L-E_i-E_j$ and by $L_{ijk}$ that of the class $L-E_i-E_j-E_k$ for $1\le i<j<k \le 5$.
Denote also by $Q$ the effective divisor of the class $2L-E_1-E_2-E_3-E_4-E_5$.
\end{Note}

\begin{proof}[Proof of \ref{Deg4}]
Note that $\frac{5}{3}\le \widehat{\alpha}(Z) \le 2$ by \ref{Bounds}. 
First, there are only two cases with $\nu=5$: $(1,D_5,1)$ and $(2,2A_1A_3,2)$.
For $(1,D_5,1)$,
\[
D= 5L-3E_Z= 5 L_{123} + 2 E_{12} + 4E_{23} + 6E_{34} + 3E_{45}
\]
is effective.
Also $F =-K_X$ is nef and $D\cdot F=0$, hence $\widehat{\alpha}(Z)=\frac{5}{3}$.
Similarly, $\widehat{\alpha}(Z)=\frac{5}{3}$ for $(2,2A_1A_3,2)$.
We take $D=5L-3E_Z$ and $F=-K_X$.
In fact
\[
5L-3E_Z = 2L_{123} + 3L_{145} + 2E_{12} + E_{23}
\]
is effective.

For remaining cases ($\nu\le 4$), we proceed inductively on $n=1,\cdots,5$.

\vspace{0,5em} \noindent \textbf{Case 1(\boldmath$n$=1).}
There is only one case: $(1,A_4,3)$.
In this case, we have $\widehat{\alpha}(Z)=2$.
Here we take $D=Q$ and $F=5L-2E_Z$.

\vspace{0,5em} \noindent \textbf{Case 2(\boldmath$n$=2).}
There are five cases with $\nu=4$: $(2,D_4,2)$, $(2,A_4,3)(a)$, $(2,A_4,3)(b)$, $(2,A_1A_3,3)$ and $(2,2A_1A_2,4)$.
For type $(2,A_4,3)(a)$, we take 
\[
D=7L-4E_Z = 6L_{124} + L_{45} + 2E_{12} + 4E_{23} + 3E_{45}
\]
and $F=4L-E_1-E_2-E_3-2E_4-2E_5$. In this case, we have $\widehat{\alpha}(Z)=\frac{7}{4}$.
For the other cases with $\nu=4$, we take $D=Q$ and
\begin{align*}
F=\, 
& 2L-E_2-E_3-E_4-E_5, \\
& 3L-E_1-2E_2-E_3-E_4-E_5, \\
& 4L-2E_1-2E_2-2E_3-E_4-E_5, ~ \mathrm{and} \\
& 4L-2E_1-E_2-E_3-2E_4-2E_5,
\end{align*}
respectively.
Thus, $\widehat{\alpha}(Z)=2$ in these cases.

The remaining types are $(2,A_3,5)$ and $(2,A_1A_2,6)$.
For those types, we construct simple $5$-edpfs.
Basically, we use the following construction:
Let $Y$ be the blowing down of $X$ contracting $E_5$, and choose a smooth curve $T\subset Y$ passing through $p_5$.
Let $X_5^T=T\times Y \to T$ be the trivial family.
Define a section $\Delta:T\to T\times T\subset X_5^T$ by the diagonal map.
Blowing up $X_5^T$ along $\Delta$, we get a $5$-edpf $\pi_5^T:X_6^T\to T$ with $X=X_6^{p_5}$.
Let $U\subset T$ be the open subset as in \ref{MainLem2}.
If $T$ was properly chosen so that $p_5\in U$, then $\pi_5^T:X_6^T\to T$ is a simple $5$-edpf for $X$, and we have $\widehat{\alpha}(Z)= \widehat{\alpha}(Z^{p_5})\ge \widehat{\alpha}(Z^{t})$ for any $t\in T$.

\noindent \textbf{$\bullet$ Type \boldmath{$(2,A_3,5)$}.}
There is a unique smooth rational curve in the linear system $|Q|$.
Let $Q'$ be its image in $Y$. It is a smooth rational curve. 
We take $T=Q'$ and $p_5'$ to be the intersection point of $T$ and $E_4$.
Then, we get a simple $5$-edpf $\pi_5^T:X_6^T\to T$ for $X=X_6^{p_5}$.
Therefore, $\widehat{\alpha}(Z^{p_5})\ge \widehat{\alpha}(Z^{p_5'})=2$ since $X_6^{p_5'}$ is of the type $(1,A_4,3)$,
and hence $\widehat{\alpha}(Z)=2$.

\noindent \textbf{$\bullet$ Type \boldmath{$(2,A_1A_2,6)$}.}
We take $T=E_4$ and $p_5'$ to be the intersection point of the effective divisor $L_{14}$ and $T$.
The induced blowing up $\pi_5^T:X_6^T\to T$ is a simple $5$-edpf for $X=X_6^{p_5}$. 
Note that $X_6^{p_5'}$ is of the type $(2,A_1A_3,3)$, and therefore $\widehat{\alpha}(Z)=2$.

\vspace{0,5em} \noindent \textbf{Case 3(\boldmath$n$=3).}
There are three cases with $\nu=4$: $(3,A_1A_3,3)$, $(3,2A_1A_2,4)$, and $(3,4A_1,4)$.
For type $(3,2A_1A_2,4)$, we have $\widehat{\alpha}(Z)=\frac{9}{5}$.
In fact,
\[
D=9L-5E_Z = 4L_{124} + 3L_{345} + 2L_{13} + E_{12} + 2E_{45}
\]
is an effective divisor and $F=5L-2E_1-2E_2-3E_3-E_4-E_5$ is a nef divisor such that $D\cdot F=0$.
For the remaining two cases, we take $D=Q$ and $F= 2L-E_2-E_3-E_4-E_5$.
Therefore, $\widehat{\alpha}(Z)=2$ in both cases.

It remains eight cases with $\nu\le3$: $(3,A_3,4)$, $(3,A_3,5)(2)$, $(3,A_1A_2,6)(2)$, $(3,3A_1,6)$, $(3,A_2,8)$, and $(3,2A_1,9)$.
If $X$ is of the type $(3,A_3,5)(a)$, we take $D=2L-E_Z = 2L_{123} + E_{14} + E_{25} + E_3$ and $F=2L-E_1-E_2-E_4-E_5$.
Therefore, $\widehat{\alpha}(Z)=2$ in this case.

We construct simple $5$-edpfs for the remaining types.

\noindent \textbf{$\bullet$ Type \boldmath{$(3,A_3,4)$}.}
Take $T=L_{14}$ and let $p_5'$ be the intersection point of $T$ and $E_4$.
Then $X_6^{p_5'}$ is of the type $(2,A_1A_3,3)$ and hence $\widehat{\alpha}(Z)=2$.

\noindent \textbf{$\bullet$ Types \boldmath{$(3,A_3,5)(b)$} and \boldmath{$(3,A_1A_2,6)(a)$}.}
Let $T$ be a general cubic of the class $3L-E_1-E_2-E_3-E_4$ passing through $p_5$, and let $p_5'$ be the intersection point of the cubic $T$ and $E_4$.
Then $X_6^{p_5'}$ is of the type $(2,D_4,2)$ and $(2,A_1A_3,3)$, respectively. 
Therefore, $\widehat{\alpha}(Z)=2$ for both types.

\noindent \textbf{$\bullet$ Type \boldmath{$(3,A_1A_2,6)(b)$}.}
Let $T$ be the image of $L_{45}$ in $Y$ and $p_5'$ be the intersection point of $T$ and $E_4$.
Then $X_6^{p_5'}$ is of the type $(2,2A_1A_2,4)$ and hence $\widehat{\alpha}(Z)=2$.

\noindent \textbf{$\bullet$ Type \boldmath{$(3,3A_1,6)$}.}
Take $T=L_{34}$ and let $p_5'$ be the intersection point of $T$ and $E_4$.
Then $X_6^{p_5'}$ is of the type $(2,2A_1A_2,4)$ and hence $\widehat{\alpha}(Z)=2$.

\noindent \textbf{$\bullet$ Types \boldmath{$(3,A_2,8)$} and \boldmath{$(3,2A_1,9)$}.}
Let $T$ be the image of $Q$ in $Y$ and $p_5'$ be the intersection point of $T$ and $E_4$.
Then $X_6^{p_5'}$ is of the type $(2,A_1A_2,6)$ in both types, and therefore $\widehat{\alpha}(Z)=2$.

\vspace{0,5em} \noindent \textbf{Case 4(\boldmath$n$=4).}
There are six cases with $\nu\le3$: $(4,A_1A_2,6)$, $(4,3A_1,6)$, $(4,A_2,8)$, $(4,2A_1,8)$, $(4,2A_1,9)$ and $(4,A_1,12)$.

\noindent \textbf{$\bullet$ Types \boldmath{$(4,A_1A_2,6)$}, \boldmath{$(4,3A_1,6)$} and \boldmath{$(4,A_2,8)$}.}
Take $T=L_{14}$ and let $p_5'$ be the intersection point of $T$ and $E_4$.
Then $X_6^{p_5'}$ is of the type $(3,A_1A_3,3)$, $(3,4A_1,4)$ and $(3,A_1A_2,6)$, respectively, and hence $\widehat{\alpha}(Z)=2$ for these types.

\noindent \textbf{$\bullet$ Type \boldmath{$(4,2A_1,8)$}.}
Take $T=L_{34}$ and let $p_5'$ be the intersection point of $T$ and $E_4$.
Then $X_6^{p_5'}$ is of the type $(3,3A_1,6)$ and hence $\widehat{\alpha}(Z)=2$.

\noindent \textbf{$\bullet$ Types \boldmath{$(4,2A_1,9)$} and \boldmath{$(4,A_1,12)$}.}
Let $T$ be the image of $L_{45}$ in $Y$ and $p_5'$ be the intersection point of $T$ and $E_4$.
Then $X_6^{p_5'}$ is of the type $(3,3A_1,6)$ and $(3,2A_1,9)$, respectively, and therefore $\widehat{\alpha}(Z)=2$ in both types.

\vspace{0,5em} \noindent \textbf{Case 5(\boldmath$n$=5).}
Except the general case, we only left two cases: $(5,2A_1,9)$ and $(5,A_1,12)$.
For each case, we construct a simple $5$-edpf for $X$ by taking $T=L_{14}$.
Let $p_5'$ be the intersection point of $T$ and $E_4$.
Then $X_6^{p_5'}$ is of the type $(4,3A_1,6)$ and $(2,2A_1,9)$, respectively, and therefore $\widehat{\alpha}(Z)=2$ in both types.
\end{proof}


\Urlmuskip=0mu plus 1mu\relax
\bibliographystyle{abbrv}

\addresseshere

\clearpage


\newpage
\appendix
\section{The configurations of irreducible roots and lines}\label{List}

\tikzset{
    bp/.style = {draw,circle,fill=black,minimum size=4pt,inner sep=0pt},
    wp/.style = {draw,circle,fill=white,minimum size=4pt,inner sep=0pt}
}


\begin{figure}[!ht]
\begin{minipage}[c]{0.4\textwidth}
\centering\begin{tikzpicture}[xscale=0.5, yscale=0.5]
\draw 
(-4,0) node[wp] (-40) [label=above:$E_{12}$] {} --
(-2,0) node[wp] (-20) [label=above:$E_{23}$] {} --
(0,0) node[wp] (00) [label=above:$E_{34}$] {} --
(2,0) node[wp] (20) [label=above:$E_{45}$] {} --
(4,0) node[bp] (40) [label=above:$E_{5}$] {}
(0,-2) node[wp] (0-2) [label=right:$L_{123}$] {};
\draw (00) -- (0-2);
\end{tikzpicture}
\caption{(1,$D_5$,1)} \label{fig:(1,D5,1)}
\end{minipage}
\hspace{0.1\textwidth}
\begin{minipage}[c]{0.4\textwidth}
\centering\begin{tikzpicture}[xscale=0.5, yscale=0.5]
\draw 
(0,0) node[wp] (00) [label=above:$E_{12}$] {} --
(2,0) node[wp] (20) [label=above:$E_{23}$] {} --
(4,0) node[wp] (40) [label=above:$E_{34}$] {} --
(6,0) node[wp] (60) [label=above:$E_{45}$] {} --
(8,0) node[bp] (80) [label=above:$E_{5}$] {} --
(10,0) node[bp] (100) [label=above:$Q$] {}
(2,-2) node[bp] (2-2) [label=right:$L_{12}$] {};
\draw (20) -- (2-2);
\end{tikzpicture}
\caption{(1,$A_4$,3)} \label{fig:(1,A4,3)}
\end{minipage}
\end{figure}


\begin{figure}[!ht]
\begin{minipage}[c]{0.4\textwidth}
\centering\begin{tikzpicture}[xscale=0.5, yscale=0.5]
\draw 
(-6,0) node[wp] (-60) [label=above:$E_{45}$] {} --
(-4,0) node[bp] (-40) [label=above:$E_{5}$] {} --
(-2,0) node[wp] (20) [label=above:$L_{145}$] {} --
(0,0) node[wp] (00) [label=above:$E_{12}$] {} --
(2,0) node[wp] (20) [label=above:$E_{23}$] {} --
(4,0) node[bp] (40) [label=above:$E_{3}$] {} --
(6,0) node[wp] (60) [label=above:$L_{123}$] {};
\end{tikzpicture}
\caption{(2,$2A_1A_3$,2)} \label{fig:(2,2A1A3,2)}
\end{minipage}
\hspace{0.1\textwidth}
\begin{minipage}[c]{0.4\textwidth}
\centering\begin{tikzpicture}[xscale=0.5, yscale=0.5]
\draw 
(-4,0) node[bp] (-40) [label=above:$E_{1}$] {} --
(-2,0) node[wp] (-20) [label=above:$L_{123}$] {} --
(0,0) node[wp] (00) [label=above:$E_{34}$] {} --
(2,0) node[wp] (20) [label=above:$E_{45}$] {} --
(4,0) node[bp] (40) [label=above:$E_{5}$] {}
(0,-2) node[wp] (0-2) [label=right:$E_{23}$] {};
\draw (00) -- (0-2);
\end{tikzpicture}
\caption{(2,$D_4$,2)} \label{fig:(2,D4,2)}
\end{minipage}
\end{figure}

\begin{figure}[!ht]
\begin{minipage}[c]{0.4\textwidth}
\centering\begin{tikzpicture}[xscale=0.5, yscale=0.5]
\draw 
(0,0) node[wp] (00) [label=above:$E_{12}$] {} --
(2,0) node[wp] (20) [label=above:$E_{23}$] {} --
(4,0) node[wp] (40) [label=above:$L_{124}$] {} --
(6,0) node[wp] (60) [label=above:$E_{45}$] {} --
(8,0) node[bp] (80) [label=above:$E_{5}$] {} --
(10,0) node[bp] (100) [label=above:$L_{45}$] {}
(2,-2) node[bp] (2-2) [label=right:$E_{3}$] {};
\draw (20) -- (2-2);
\end{tikzpicture}
\caption{(2,$A_4$,3)(a)} \label{fig:(2,A4,3)(a)}
\end{minipage}
\hspace{0.1\textwidth}
\begin{minipage}[c]{0.4\textwidth}
\centering\begin{tikzpicture}[xscale=0.5, yscale=0.5]
\draw 
(0,0) node[wp] (00) [label=above:$L_{134}$] {} --
(2,0) node[wp] (20) [label=above:$E_{45}$] {} --
(4,0) node[wp] (40) [label=above:$E_{34}$] {} --
(6,0) node[wp] (60) [label=above:$E_{13}$] {} --
(8,0) node[bp] (80) [label=above:$L_{12}$] {} --
(10,0) node[bp] (100) [label=above:$E_{2}$] {}
(2,-2) node[bp] (2-2) [label=right:$E_{5}$] {};
\draw (20) -- (2-2);
\end{tikzpicture}
\caption{(2,$A_4$,3)(b)} \label{fig:(2,A4,3)(b)}
\end{minipage}
\end{figure}

\begin{figure}[!ht]
\begin{minipage}[c]{0.4\textwidth}
\centering\begin{tikzpicture}[xscale=0.5, yscale=0.5]
\draw 
(0,2) node[bp] (02) [label=left:$E_{3}$] {} 
(0,-2) node[bp] (0-2) [label=left:$L_{12}$] {} --
(2,0) node[wp] (20) [label=left:$E_{23}$] {} --
(4,0) node[wp] (40) [label=above:$E_{12}$] {} --
(6,0) node[wp] (60) [label=above:$L_{145}$] {} --
(8,0) node[bp] (80) [label=above:$E_{5}$] {} --
(10,0) node[wp] (100) [label=above:$E_{45}$] {};
\draw (20) -- (02);
\end{tikzpicture}\caption{(2,$A_1A_3$,3)} \label{fig:(2,A1A3,3)}
\end{minipage}
\hspace{0.1\textwidth}
\begin{minipage}[c]{0.4\textwidth}
\centering\begin{tikzpicture}[xscale=0.5, yscale=0.5]
\draw 
(0,0) node[wp] (00) [label=below:$E_{12}$] {} --
(2,0) node[wp] (20) [label=below:$E_{23}$] {} --
(4,2) node[bp] (42) [label=right:$E_{3}$] {} --
(4,4) node[wp] (44) [label=right:$L_{123}$] {} --
(2,6) node[bp] (26) [label=above:$L_{45}$] {} --
(0,6) node[bp] (06) [label=above:$E_{5}$] {} --
(-2,4) node[wp] (-24) [label=left:$E_{45}$] {} --
(-2,2) node[bp] (-22) [label=left:$L_{14}$] {};
\draw (00) -- (-22);
\end{tikzpicture}\caption{(2,$2A_1A_2$,4)} \label{fig:(2,2A1A2,4)}
\end{minipage}
\end{figure}

\begin{figure}[!ht]
\begin{minipage}[c]{0.4\textwidth}
\centering\begin{tikzpicture}[xscale=0.5, yscale=0.5]
\draw 
(0,0) node[wp] (00) [label=above:$E_{12}$] {} --
(2,0) node[wp] (20) [label=above:$E_{23}$] {} --
(4,0) node[wp] (40) [label=above:$E_{34}$] {} --
(6,-2) node[bp] (6-2) [label=right:$E_{4}$] {} --
(4,-4) node[bp] (4-4) [label=below:$Q$] {} --
(0,-4) node[bp] (0-4) [label=below:$E_{5}$] {} --
(-2,-2) node[bp] (-2-2) [label=left:$L_{15}$] {}
(2,-2) node[bp] (2-2) [label=left:$L_{12}$] {};
\draw (-2-2) -- (00);
\draw (20) -- (2-2);
\end{tikzpicture}
\caption{(2,$A_3$,5)} \label{fig:(2,A3,5)}
\end{minipage}
\hspace{0.1\textwidth}
\begin{minipage}[c]{0.4\textwidth}
\centering\begin{tikzpicture}[xscale=0.5, yscale=0.5]
\draw
(0,0) node[bp] (00) [label=above:$L_{12}$] {} --
(2,0) node[bp] (20) [label=above:$L_{45}$] {} --
(3,-1) node[bp] (3-1) [label=left:$E_{5}$] {} --
(2,-2) node[bp] (2-2) [label=below:$Q$] {} --
(0,-2) node[bp] (0-2) [label=below:$E_{3}$] {} --
(-1,-1) node[wp] (-1-1) [label=right:$E_{23}$] {} --
(-3,-1) node[wp] (-3-1) [label=above:$E_{12}$] {} --
(1,-5) node[bp] (1-5) [label=below:$L_{14}$] {} --
(5,-1) node[wp] (5-1) [label=above:$E_{45}$] {};
\draw (-1-1) -- (00);
\draw (5-1) -- (3-1);
\end{tikzpicture}
\caption{(2,$A_1A_2$,6)} \label{fig:(2,A1A2,6)}
\end{minipage}
\end{figure}


\begin{figure}[!ht]
\begin{minipage}[c]{0.4\textwidth}
\centering\begin{tikzpicture}[xscale=0.5, yscale=0.5]
\draw 
(0,2) node[bp] (02) [label=left:$E_{2}$] {} 
(0,-2) node[bp] (0-2) [label=left:$E_{3}$] {} --
(2,0) node[wp] (20) [label=left:$L_{123}$] {} --
(4,0) node[wp] (40) [label=above:$E_{14}$] {} --
(6,0) node[wp] (60) [label=above:$E_{45}$] {} --
(8,0) node[bp] (80) [label=above:$E_{5}$] {} --
(10,0) node[wp] (100) [label=above:$L_{145}$] {};
\draw (20) -- (02);
\end{tikzpicture}
\caption{(3,$A_1A_3$,3)} \label{fig:(3,A1A3,3)}
\end{minipage}
\hspace{0.1\textwidth}
\begin{minipage}[c]{0.4\textwidth}
\centering\begin{tikzpicture}[xscale=0.5, yscale=0.5]
\draw 
(0,0) node[wp] (00) [label=below:$E_{45}$] {} --
(2,0) node[wp] (20) [label=below:$L_{124}$] {} --
(4,2) node[bp] (42) [label=right:$E_{2}$] {} --
(4,4) node[wp] (44) [label=right:$E_{12}$] {} --
(2,6) node[bp] (26) [label=above:$L_{13}$] {} --
(0,6) node[bp] (06) [label=above:$E_{3}$] {} --
(-2,4) node[wp] (-24) [label=left:$L_{345}$] {} --
(-2,2) node[bp] (-22) [label=left:$E_{5}$] {};
\draw (00) -- (-22);
\end{tikzpicture}
\caption{(3,$2A_1A_2$,4)} \label{fig:(3,2A1A2,4)}
\end{minipage}
\end{figure}

\begin{figure}[!ht]
\begin{minipage}[c]{0.4\textwidth}
\centering\begin{tikzpicture}[xscale=0.5, yscale=0.5]
\draw 
(0,0) node[bp] (00) [label=below:$e_{2}$] {} --
(2,0) node[wp] (20) [label=below:$e_{12}$] {} --
(4,2) node[bp] (42) [label=right:$l_{14}$] {} --
(4,4) node[wp] (44) [label=right:$e_{45}$] {} --
(2,6) node[bp] (26) [label=above:$e_{5}$] {} --
(0,6) node[wp] (06) [label=above:$l_{345}$] {} --
(-2,4) node[bp] (-24) [label=left:$e_{3}$] {} --
(-2,2) node[wp] (-22) [label=left:$l_{123}$] {};

\draw (00) -- (-22);
\end{tikzpicture}
\caption{(3,$4A_1$,4)} \label{fig:(3,4A1,4)}
\end{minipage}
\hspace{0.1\textwidth}
\begin{minipage}[c]{0.4\textwidth}
\centering\begin{tikzpicture}[xscale=0.5, yscale=0.5]
\draw 
(0,2) node[bp] (02) [label=left:$E_{4}$] {} 
(0,-2) node[bp] (0-2) [label=left:$E_{5}$] {} --
(2,0) node[wp] (20) [label=left:$L_{145}$] {} --
(4,0) node[wp] (40) [label=above:$E_{12}$] {} --
(6,0) node[wp] (60) [label=right:$E_{23}$] {} --
(8,2) node[bp] (82) [label=right:$E_{3}$] {}
(8,-2) node[bp] (8-2) [label=right:$L_{12}$] {};
\draw (20) -- (02);
\draw (60) -- (8-2);
\end{tikzpicture}
\caption{(3,$A_3$,4)} \label{fig:(3,A3,4)}
\end{minipage}
\end{figure}

\begin{figure}[!ht]
\begin{minipage}[c]{0.4\textwidth}
\centering\begin{tikzpicture}[xscale=0.5, yscale=0.5]
\draw 
(0,0) node[wp] (00) [label=above:$E_{14}$] {} --
(2,0) node[wp] (20) [label=above:$L_{123}$] {} --
(4,0) node[wp] (40) [label=above:$E_{25}$] {} --
(6,-2) node[bp] (6-2) [label=right:$E_{5}$] {} --
(4,-4) node[bp] (4-4) [label=below:$L_{25}$] {} --
(0,-4) node[bp] (0-4) [label=below:$L_{14}$] {} --
(-2,-2) node[bp] (-2-2) [label=left:$E_{4}$] {}
(2,-2) node[bp] (2-2) [label=left:$E_{3}$] {};
\draw (-2-2) -- (00);
\draw (20) -- (2-2);
\end{tikzpicture}
\caption{(3,$A_3$,5)(a)} \label{fig:(3,A3,5)(a)}
\end{minipage}
\hspace{0.1\textwidth}
\begin{minipage}[c]{0.4\textwidth}
\centering\begin{tikzpicture}[xscale=0.5, yscale=0.5]
\draw 
(0,0) node[wp] (00) [label=above:$E_{23}$] {} --
(2,0) node[wp] (20) [label=above:$E_{34}$] {} --
(4,0) node[wp] (40) [label=above:$L_{123}$] {} --
(6,-2) node[bp] (6-2) [label=right:$E_{1}$] {} --
(4,-4) node[bp] (4-4) [label=below:$L_{15}$] {} --
(0,-4) node[bp] (0-4) [label=below:$E_{5}$] {} --
(-2,-2) node[bp] (-2-2) [label=left:$L_{25}$] {}
(2,-2) node[bp] (2-2) [label=left:$E_{4}$] {};
\draw (-2-2) -- (00);
\draw (20) -- (2-2);
\end{tikzpicture}
\caption{(3,$A_3$,5)(b)} \label{fig:(3,A3,5)(b)}
\end{minipage}
\end{figure}

\begin{figure}[!ht]
\begin{minipage}[c]{0.4\textwidth}
\centering\begin{tikzpicture}[xscale=0.5, yscale=0.5]
\draw
(0,0) node[bp] (00) [label=above:$L_{35}$] {} --
(2,0) node[bp] (20) [label=above:$E_{5}$] {} --
(3,-1) node[bp] (3-1) [label=left:$L_{15}$] {} --
(2,-2) node[bp] (2-2) [label=below:$L_{34}$] {} --
(0,-2) node[bp] (0-2) [label=below:$E_{4}$] {} --
(-1,-1) node[wp] (-1-1) [label=right:$E_{34}$] {} --
(-3,-1) node[wp] (-3-1) [label=above:$L_{123}$] {} --
(1,-5) node[bp] (1-5) [label=below:$E_{2}$] {} --
(5,-1) node[wp] (5-1) [label=above:$E_{12}$] {};
\draw (-1-1) -- (00);
\draw (5-1) -- (3-1);
\end{tikzpicture}
\caption{(3,$A_1A_2$,6)(a)} \label{fig:(3,A1A2,6)(a)}
\end{minipage}
\hspace{0.1\textwidth}
\begin{minipage}[c]{0.4\textwidth}
\centering\begin{tikzpicture}[xscale=0.5, yscale=0.5]
\draw
(0,0) node[bp] (00) [label=above:$L_{14}$] {} --
(2,0) node[bp] (20) [label=above:$E_{4}$] {} --
(3,-1) node[bp] (3-1) [label=left:$L_{45}$] {} --
(2,-2) node[bp] (2-2) [label=below:$E_{5}$] {} --
(0,-2) node[bp] (0-2) [label=below:$L_{15}$] {} --
(-1,-1) node[wp] (-1-1) [label=right:$E_{12}$] {} --
(-3,-1) node[wp] (-3-1) [label=above:$E_{23}$] {} --
(1,-5) node[bp] (1-5) [label=below:$E_{3}$] {} --
(5,-1) node[wp] (5-1) [label=above:$L_{123}$] {};
\draw (-1-1) -- (00);
\draw (5-1) -- (3-1);
\end{tikzpicture}
\caption{(3,$A_1A_2$,6)(b)} \label{fig:(3,A1A2,6)(b)}
\end{minipage}
\end{figure}

\begin{figure}[!ht]
\begin{minipage}[c]{0.4\textwidth}
\centering\begin{tikzpicture}[xscale=0.5, yscale=0.5]
\draw
(0,0) node[bp] (00) [label=above:$E_{2}$] {} --
(2,0) node[bp] (20) [label=above:$L_{12}$] {} --
(3,-1) node[wp] (3-1) [label=left:$L_{345}$] {} --
(2,-2) node[bp] (2-2) [label=below:$E_{5}$] {} --
(0,-2) node[bp] (0-2) [label=below:$L_{15}$] {} --
(-1,-1) node[wp] (-1-1) [label=right:$E_{12}$] {} --
(-3,-1) node[bp] (-3-1) [label=above:$L_{13}$] {} --
(1,-5) node[wp] (1-5) [label=below:$E_{34}$] {} --
(5,-1) node[bp] (5-1) [label=above:$E_{4}$] {};
\draw (-1-1) -- (00);
\draw (5-1) -- (3-1);
\end{tikzpicture}
\caption{(3,$3A_1$,6)} \label{fig:(3,3A1,6)}
\end{minipage}
\hspace{0.1\textwidth}
\begin{minipage}[c]{0.4\textwidth}
\centering\begin{tikzpicture}[xscale=0.5, yscale=0.5]
\draw 
(0,0) node[bp] (00) [label=above:$L_{15}$] {} --
(2,0) node[bp] (20) [label=above:$E_{5}$] {} --
(4,0) node[bp] (40) [label=above:$L_{45}$] {} --
(6,0) node[bp] (60) [label=above:$L_{12}$] {} --
(8,-2) node[wp] (8-2) [label=right:$E_{23}$] {} --
(6,-4) node[bp] (6-4) [label=below:$E_{3}$] {} --
(4,-4) node[bp] (4-4) [label=below:$Q$] {} --
(2,-4) node[bp] (2-4) [label=below:$E_{4}$] {} --
(0,-4) node[bp] (0-4) [label=below:$L_{14}$] {} --
(-2,-2) node[wp] (-2-2) [label=left:$E_{12}$] {};
\draw (-2-2) -- (00);
\draw (-2-2) -- (8-2);
\draw (20) -- (4-4);
\draw (40) -- (2-4);
\end{tikzpicture}
\caption{(3,$A_2$,8)} \label{fig:(3,A2,8)}
\end{minipage}
\end{figure}

\begin{figure}[!ht]
\centering\begin{tikzpicture}[xscale=0.8, yscale=0.8]
\draw
(0,0) node[bp] (00) [label=above:$L_{15}$] {} --
(2,0) node[bp] (20) [label=above:$L_{34}$] {} --
(4,0) node[bp] (40) [label=above:$E_{4}$] {} --
(6,-1) node[wp] (6-1) [label=above:$E_{34}$] {} --
(4,-2) node[bp] (4-2) [label=above:$L_{35}$] {} --
(2,-2) node[bp] (2-2) [label=below:$L_{12}$] {} --
(0,-2) node[bp] (0-2) [label=above:$E_{2}$] {} --
(-2,-1) node[wp] (-2-1) [label=above:$E_{12}$] {}
(2,-4) node[bp] (2-4) [label=below:$L_{13}$] {}
(0,-1) node[bp] (0-1) [label=left:$E_{5}$] {} --
(4,-1) node[bp] (4-1) [label=right:$Q$] {};
\draw (-2-1) -- (00);
\draw (20) -- (2-2);
\draw (0-1) -- (00);
\draw (0-1) -- (4-2);
\draw (4-1) -- (40);
\draw (4-1) -- (0-2);
\draw (-2-1) -- (2-4);
\draw (2-4) -- (6-1);
\end{tikzpicture}
\caption{(3,$2A_1$,9)} \label{fig:(3,2A1,9)}
\end{figure}


\begin{figure}[!ht]
\begin{minipage}[c]{0.4\textwidth}
\centering\begin{tikzpicture}[xscale=0.5, yscale=0.5]
\draw
(0,0) node[bp] (00) [label=above:$E_{2}$] {} --
(2,0) node[bp] (20) [label=above:$L_{25}$] {} --
(3,-1) node[bp] (3-1) [label=left:$E_{5}$] {} --
(2,-2) node[bp] (2-2) [label=below:$L_{35}$] {} --
(0,-2) node[bp] (0-2) [label=below:$E_{3}$] {} --
(-1,-1) node[wp] (-1-1) [label=right:$L_{123}$] {} --
(-3,-1) node[wp] (-3-1) [label=above:$E_{14}$] {} --
(1,-5) node[bp] (1-5) [label=below:$E_{4}$] {} --
(5,-1) node[wp] (5-1) [label=above:$L_{145}$] {};
\draw (-1-1) -- (00);
\draw (5-1) -- (3-1);
\end{tikzpicture}
\caption{(4,$A_1A_2$,6)} \label{fig:(4,A1A2,6)}
\end{minipage}
\hspace{0.1\textwidth}
\begin{minipage}[c]{0.4\textwidth}
\centering\begin{tikzpicture}[xscale=0.5, yscale=0.5]
\draw
(0,0) node[bp] (00) [label=above:$E_{4}$] {} --
(2,0) node[bp] (20) [label=above:$L_{34}$] {} --
(3,-1) node[wp] (3-1) [label=left:$E_{23}$] {} --
(2,-2) node[bp] (2-2) [label=below:$L_{35}$] {} --
(0,-2) node[bp] (0-2) [label=below:$E_{5}$] {} --
(-1,-1) node[wp] (-1-1) [label=right:$L_{145}$] {} --
(-3,-1) node[bp] (-3-1) [label=above:$L_{13}$] {} --
(1,-5) node[wp] (1-5) [label=below:$L_{123}$] {} --
(5,-1) node[bp] (5-1) [label=above:$E_{2}$] {};
\draw (-1-1) -- (00);
\draw (5-1) -- (3-1);
\end{tikzpicture}
\caption{(4,$3A_1$,6)} \label{fig:(4,3A1,6)}
\end{minipage}
\end{figure}

\begin{figure}[!ht]
\begin{minipage}[c]{0.4\textwidth}
\centering\begin{tikzpicture}[xscale=0.5, yscale=0.5]
\draw 
(0,0) node[bp] (00) [label=above:$L_{13}$] {} --
(2,0) node[bp] (20) [label=above:$E_{3}$] {} --
(4,0) node[bp] (40) [label=above:$L_{34}$] {} --
(6,0) node[bp] (60) [label=above:$E_{4}$] {} --
(8,-2) node[wp] (8-2) [label=right:$L_{145}$] {} --
(6,-4) node[bp] (6-4) [label=below:$E_{5}$] {} --
(4,-4) node[bp] (4-4) [label=below:$L_{35}$] {} --
(2,-4) node[bp] (2-4) [label=below:$L_{12}$] {} --
(0,-4) node[bp] (0-4) [label=below:$E_{2}$] {} --
(-2,-2) node[wp] (-2-2) [label=left:$E_{12}$] {};
\draw (-2-2) -- (00);
\draw (-2-2) -- (8-2);
\draw (20) -- (4-4);
\draw (40) -- (2-4);
\end{tikzpicture}
\caption{(4,$A_2$,8)} \label{fig:(4,A2,8)}
\end{minipage}
\hspace{0.1\textwidth}
\begin{minipage}[c]{0.4\textwidth}
\centering\begin{tikzpicture}[xscale=0.7, yscale=0.7]
\draw 
(0,0) node[wp] (00) [label=left:$E_{12}$] {} --
(2,0.5) node[bp] (2,0.5) [label=above:$L_{23}$] {} --
(6,0.5) node[bp] (6,0.5) [label=above:$E_{3}$] {} --
(8,0) node[wp] (80) [label=right:$L_{345}$] {}
(2,1.5) node[bp] (2,1.5) [label=above:$E_{1}$] {} --
(6,1.5) node[bp] (6,1.5) [label=above:$L_{12}$] {} 
(2,-0.5) node[bp] (2,-0.5) [label=above:$L_{24}$] {} --
(6,-0.5) node[bp] (6,-0.5) [label=above:$E_{4}$] {}
(2,-1.5) node[bp] (2,-1.5) [label=above:$L_{25}$] {} --
(6,-1.5) node[bp] (6,-1.5) [label=above:$E_{5}$] {};
\draw (00) -- (2,1.5);
\draw (00) -- (2,-0.5);
\draw (00) -- (2,-1.5);
\draw (80) -- (6,1.5);
\draw (80) -- (6,-0.5);
\draw (80) -- (6,-1.5);
\end{tikzpicture}
\caption{(4,$2A_1$,8)} \label{fig:(4,2A1,8)}
\end{minipage}
\end{figure}

\begin{figure}[!ht]
\centering\begin{tikzpicture}[xscale=0.8, yscale=0.8]
\draw
(0,0) node[bp] (00) [label=above:$L_{14}$] {} --
(2,0) node[bp] (20) [label=above:$L_{35}$] {} --
(4,0) node[bp] (40) [label=above:$E_{3}$] {} --
(6,-1) node[wp] (6-1) [label=above:$L_{123}$] {} --
(4,-2) node[bp] (4-2) [label=above:$L_{45}$] {} --
(2,-2) node[bp] (2-2) [label=below:$E_{5}$] {} --
(0,-2) node[bp] (0-2) [label=above:$L_{15}$] {} --
(-2,-1) node[wp] (-2-1) [label=above:$E_{12}$] {}
(2,-4) node[bp] (2-4) [label=below:$E_{2}$] {}
(0,-1) node[bp] (0-1) [label=left:$E_{4}$] {} --
(4,-1) node[bp] (4-1) [label=right:$L_{34}$] {};
\draw (-2-1) -- (00);
\draw (20) -- (2-2);
\draw (0-1) -- (00);
\draw (0-1) -- (4-2);
\draw (4-1) -- (40);
\draw (4-1) -- (0-2);
\draw (-2-1) -- (2-4);
\draw (2-4) -- (6-1);
\end{tikzpicture}
\caption{(4,$2A_1$,9)} \label{fig:(4,2A1,9)}
\end{figure}

\begin{figure}[!ht]
\centering\begin{tikzpicture}[xscale=0.8, yscale=0.8]
\draw 
(0,0) node[wp] (00) [label=45:$E_{12}$] {} --
(2,0) node[bp] (20) [label=above:$E_{2}$] {} --
(4,0) node[bp] (40) [label=right:$L_{12}$] {} --
(2,2) node[bp] (22) [label=right:$L_{34}$] {} --
(0,4) node[bp] (04) [label=left:$E_{3}$] {} --
(0,2) node[bp] (02) [label=left:$L_{13}$] {} --
(0,-2) node[bp] (0-2) [label=right:$L_{15}$] {} --
(0,-4) node[bp] (0-4) [label=right:$E_{5}$] {} --
(2,-2) node[bp] (2-2) [label=right:$L_{35}$] {}
(-2,0) node[bp] (-20) [label=above:$L_{14}$] {}
(-4,0) node[bp] (-40) [label=left:$E_{4}$] {} --
(-2,-2) node[bp] (-2-2) [label=below:$L_{45}$] {}
(-2,2) node[bp] (-22) [label=left:$Q$] {};
\draw (00)-- (-20) -- (-40) -- (-22);
\draw (22) -- (-40);
\draw (22) -- (0-2);
\draw (2-2) -- (04);
\draw (2-2) -- (40);
\draw (2-2) -- (-20);
\draw (-22) -- (04);
\draw (-22) -- (0-4);
\draw (-22) -- (20);
\draw (-2-2) -- (40);
\draw (-2-2) -- (0-4);
\draw (-2-2) -- (02);
\end{tikzpicture}
\caption{(4,$A_1$,12)} \label{fig:(4,A1,12)}
\end{figure}


\begin{figure}[t]
\begin{center}
\begin{tikzpicture}[xscale=0.8, yscale=0.8]
\draw
(0,0) node[bp] (00) [label=above:$E_{2}$] {} --
(2,0) node[bp] (20) [label=above:$L_{24}$] {} --
(4,0) node[bp] (40) [label=above:$E_{4}$] {} --
(6,-1) node[wp] (6-1) [label=above:$L_{145}$] {} --
(4,-2) node[bp] (4-2) [label=above:$E_{5}$] {} --
(2,-2) node[bp] (2-2) [label=below:$L_{35}$] {} --
(0,-2) node[bp] (0-2) [label=above:$E_{3}$] {} --
(-2,-1) node[wp] (-2-1) [label=above:$L_{123}$] {}
(2,-4) node[bp] (2-4) [label=below:$E_{1}$] {}
(0,-1) node[bp] (0-1) [label=left:$L_{25}$] {} --
(4,-1) node[bp] (4-1) [label=right:$L_{34}$] {}
;

\draw (-2-1) -- (00);
\draw (20) -- (2-2);
\draw (0-1) -- (00);
\draw (0-1) -- (4-2);
\draw (4-1) -- (40);
\draw (4-1) -- (0-2);
\draw (-2-1) -- (2-4);
\draw (2-4) -- (6-1);
\end{tikzpicture}
\end{center}
\caption{(5,$2A_1$,9)} \label{fig:(5,2A1,9)}
\end{figure}

\begin{figure}[t]
\begin{center}
\begin{tikzpicture}[xscale=0.8, yscale=0.8]
\draw 
(0,0) node[wp] (00) [label=45:$L_{145}$] {} --
(2,0) node[bp] (20) [label=above:$E_{4}$] {} --
(4,0) node[bp] (40) [label=right:$L_{34}$] {} --
(2,2) node[bp] (22) [label=right:$L_{25}$] {} --
(0,4) node[bp] (04) [label=left:$L_{13}$] {} --
(0,2) node[bp] (02) [label=left:$E_{1}$] {} --
(0,-2) node[bp] (0-2) [label=right:$E_{5}$] {} --
(0,-4) node[bp] (0-4) [label=right:$L_{35}$] {} --
(2,-2) node[bp] (2-2) [label=right:$E_{3}$] {} -- (40)
(-2,0) node[bp] (-20) [label=above:$L_{23}$] {} -- (00)
(-4,0) node[bp] (-40) [label=left:$E_{2}$] {} --
(-2,-2) node[bp] (-2-2) [label=below:$L_{12}$] {} -- (0,-4)
(-2,2) node[bp] (-22) [label=left:$L_{24}$] {} -- (04);

\draw (-20) -- (-40) -- (-22);
\draw (22) -- (-40);
\draw (22) -- (0-2);
\draw (2-2) -- (04);
\draw (2-2) -- (-20);
\draw (-22) -- (0-4);
\draw (-22) -- (20);
\draw (-2-2) -- (40);
\draw (-2-2) -- (02);
\end{tikzpicture}
\end{center}
\caption{(5,$A_1$,12)} \label{fig:(5,A1,12)}
\end{figure}


\end{document}